\def\Nb{{\mathbb N}}
\def\a{\alpha}
\def\b{\beta}
\def\d{\delta}
\def\D{\Delta}
\def\om{\omega}
\def\s{\sigma}
\def\t{\theta}
\def\ve{\varepsilon}
\def\vp{\varphi}
\def\Id{\mathop{\rm Id}\nolimits}
\def\p{\partial}
\def\lra{\longrightarrow}
\def\ot{\otimes}
\def\odots{\ot\cdots\ot}
\def\nb{\nabla}
\def\lra{\longrightarrow}
\def\wtilde{\widetilde}
\def\rt{\triangleright}
\def\D{\Delta}
\def\Id{\mathop{\rm Id}\nolimits}
\def\Ad{\mathop{\rm Ad}\nolimits}
\newcommand{\ps}[1]{~\hspace{-4pt}_{^{(#1)}}}
\newcommand{\ns}[1]{~\hspace{-4pt}_{_{{<#1>}}}}
\newcommand{\G}[1]{\mathfrak{#1}}
\newcommand{\C}[1]{\mathcal{#1}}
\newcommand{\B}[1]{\mathbb{#1}}
\newcommand{\lmod}[1]{{#1}\text{-{\bf Mod}}}
\newcommand{\wbar}[1]{\overline{#1}}
\newcommand{\xla}[1]{\xleftarrow{\ #1\ }}
\newcommand{\CB}{{\rm CB}}
\newcommand{\CH}{{\rm CH}}
\newcommand{\CC}{{\rm CC}}
\newcommand{\CDD}{{\rm CD}}
\newcommand{\Hom}{{\rm Hom}}
\newcommand{\tor}{{\rm Tor}}
\newcommand{\ext}{{\rm Ext}}
\newcommand{\ie}{{\it i.e.\/}\ }
\newcommand{\tr}{\triangleright}
\newcommand{\tl}{\triangleleft}
\renewcommand{\leq}{\leqslant}
\renewcommand{\geq}{\geqslant}
\numberwithin{equation}{section}
\newtheorem{theorem}{Theorem}[section]
\newtheorem{proposition}[theorem]{Proposition}
\newtheorem{lemma}[theorem]{Lemma}
\newtheorem{corollary}[theorem]{Corollary}
\theoremstyle{definition}
\newtheorem{definition}[theorem]{Definition}
\title{Hopf-dihedral (co)homology and $L$-theory}
\author{A. Kaygun}
\address{Istanbul Technical University, Istanbul, Turkey}
\email{atabey.kaygun@gmail.com}
\author{S. Sütlü}
\address{Işık University, Istanbul, Turkey}
\email{serkan.sutlu@isikun.edu.tr}
\begin{document}

\maketitle

\begin{abstract}
  We develop an appropriate dihedral extension of the Connes-Moscovici
  characteristic map for Hopf *-algebras.  We then observe that one
  can use this extension together with the dihedral Chern character to
  detect non-trivial $L$-theory classes of a *-algebra that carry a
  Hopf symmetry over a Hopf *-algebra.  Using our machinery we detect
  a previously unknown $L$-class of the standard Podleś sphere.
\end{abstract}

\section*{Introduction}

In this paper we calculate a new class of finer homological invariants
of noncommutative spaces using their Hopf-algebraic symmetries.  We
borrow our strategy from the study of characteristic classes of
topological manifolds where group symmetries of vector or fiber
bundles on a manifold are used to obtain topological invariants of the
underlying manifold.  In noncommutative geometry one can similarly
obtain topological invariants of noncommutative spaces by using Hopf
symmetries of the underlying space and a canonical characteristic map
relating cohomological invariants of the underlying space and its Hopf
algebra symmetries.  One can see a beautiful and effective execution
of this strategy by Connes and Moscovici in their calculation of
topological characteristic classes of codimension-1 foliations using
the Hopf algebra $\C{H}_1$ and the characteristic map they
constructed~\cite{ConnMosc98,ConnMosc}.  Our main contribution in this
paper is a new cohomological tool particularly designed to detect
finer (more geometric) invariants of noncommutative spaces.

Cohomological invariants have been used to obtain topological
invariants of spaces effectively in the
past~\cite{Atiyah:KTheoryLectures, MilnorAndStasheff:KTheory,
  MilnorAndStasheff:CharacteristicClasses}.  We observe that geometric
invariants require much finer cohomological machineries.  From the
point of view of noncommutative geometry, this means one must study
algebras with additional structures, and use cohomology theories
sensitive to these additional structures.  Our first candidate is an
obvious, but cohomologically underused one: algebras and Hopf-algebras
that carry involutive endo-anti-morphisms called $\ast$-structures,
and their Hopf symmetries compatible with these $\ast$-structures.

The dominant cohomological machinery used in noncommutative geometry
appears to be the cyclic cohomology in many
manifestations~\cite{Conn83, Tsyg83, Karo83-III, Conn85, Loday-book,
  CuntQuil95}.  This is mainly because of the Chern character that can
detect non-trivial $K$-invariants of the underlying noncommutative
space using cyclic cocycles~\cite{Conn85,Connes-book}.  However,
cyclic cohomology is but one cohomology theory among a collection of
similarly defined other theories.  Each of these theories is indexed
by collections of groups including all finite dihedral groups, finite
symmetric groups, finite hyperoctahedral groups, and their
corresponding artin groups~\cite{Aboughazi:CrossedSimplicialGroups,
  Loday91}. In the case of cyclic cohomology, we consider finite
cyclic groups of all orders.  Our choice of dihedral cohomology as the
finer replacement of cyclic cohomology is dictated by the fact that
combining finite cyclic groups with an involution yield finite
dihedral groups.

By introducing a $*$-structure on the underlying algebra $A$, we jump
from the realm of $K$-theory into the realm of $L$-theory
\cite{Wall-book, Ranicki92}.  Since the natural extension of the Chern
character to $L$-theory uses dihedral
cocycles~\cite{KrasLapiSolo87,Cortinas:1993a,Cortinas:1993b}, a new
strategy emerges for detecting $L$-classes using a Hopf-dihedral
variant of our characteristic map.  Using this strategy we managed to
detect a previously unknown $L$-class of the standard Podleś sphere
$\C{O}(S_{q}^2)$ in Section~\ref{Sect6}.  From this point of view,
using dihedral cohomology of Hopf $\ast$-algebras and the associated
characteristic map as a refinement of the existing machinery for
Hopf-cyclic cohomology is justified since $L$-theory already yields
much finer invariants than $K$-theory.

\subsection*{Outline of the paper}

We start by defining cyclic and dihedral cohomologies as derived
functors of diagrams of vector spaces given over the cyclic and
dihedral categories in Section~\ref{Sect1}.  Then we recall the
definitions of the cyclic and the dihedral cohomologies of
$\ast$-algebras and $\ast$-coalgebras in Section~\ref{Sect2}.  In
Section~\ref{Sect3} we develop the Hopf-dihedral cohomology, with a
characteristic function built-in, of a Hopf $\ast$-algebra that has a
modular pair in involution (MPI) and an invariant trace.  We are going
to use this characteristic map, as we described above, to detect
$L$-theory classes of an algebra that carries a specific Hopf
symmetry.  In the same section we then extend the theory from MPI to
arbitrary stable coefficients, and investigate the natural
multiplicative structure on the Hopf-Hochschild cohomology, and the
*-structure induced on the cohomology.  In Section~\ref{Sect4} we
consider the complexified quantum enveloping algebra $\G{U}_q(\G{g})$
of a Lie algebra $\G{g}$, and then calculate its Hopf-dihedral
cohomology.  The next section is devoted to the dual Hopf-dihedral
homology together with its own characteristic map. In
Section~\ref{Sect6}, we investigate the interaction of our
characteristic map on the Hopf-dihedral cohomology, its dual, and the
Chern character from $L$-theory to dihedral homology on two examples:
(i) the group ring $k\pi$ of the fundamental group $\pi$ of a multiply
connected manifold $M$, and (ii) Podleś sphere $\C{O}(S_{qs}^2)$.

\subsection*{Notation and conventions}

We use a base field $k$ of characteristic 0 which carries a complex
structure.  WLOG one can assume $k=\B{Q}[\sqrt{-1}]$, or any other
field containing $\B{Q}[\sqrt{-1}]$.  We will use
$\B{N}\left<\frac{1}{2}\right>$ to denote the set of positive half
integers $0,\frac{1}{2},1,\frac{3}{2},\ldots$, and $\left<X\right>$ to
denote the $k$-vector space spanned by a given set $X$.  We use
$\CH_\bullet$, $\CC_\bullet$ and $\CDD_\bullet$ to denote respectively
the Hochschild, the cyclic and dihedral complexes associated with a
dihedral module.  Similarly, we use $HH_\bullet$, $HC_\bullet$ and
$HD_\bullet$ to denote respectively the Hochschild, the cyclic and the
dihedral homology of a dihedral module.  We use $\tor^A_\bullet$ and
$\ext_A^\bullet$ to denote respectively the derived functors of the
tensor product $\otimes_A$ and the Hom-functor $\Hom_A$ for a unital
associative algebra $A$.

\section{Cyclic and dihedral (co)modules and their
  (co)homology}\label{Sect1}

\subsection{The cyclic category}

Our main references for this subsection are \cite{Conn83} and
\cite{Loday-book}.

The cyclic category $\D C$ is the category with the set of objects $[n]$, $n\in \Nb$. The
morphisms, on the other hand, are generated by the cofaces $\partial_i: [n-1]\lra [n]$, 
$0\leq i \leq n$, the codegeneracies $\s_j:[n+1]\lra [n]$, $0\leq j\leq n$, and the cyclic operators $\tau_n:[n]\lra [n]$ subject to the relations
\begin{equation*}
\partial_j  \partial_i = \partial_i  \partial_{j-1}, \, \, i < j  , \qquad \s_j \s_i = \s_i\s_{j+1},  \, \,  i \leq j
\end{equation*}
\begin{equation*}
\s_j  \partial_i = \left\{ \begin{matrix} \partial_i  \s_{j-1} \hfill &i < j
\hfill \cr \Id_{[n]} \hfill &\hbox{if} \ i=j \ \hbox{or} \ i = j+1 \cr
\partial_{i-1}  \s_j \hfill &i > j+1 \hfill \cr
\end{matrix} \right.
\end{equation*}
\begin{eqnarray*}
\tau_n  \partial_i  = \partial_{i-1}  \tau_{n-1} ,
 \quad && 1 \leq i \leq n ,  \quad \tau_n  \partial_0 =
\partial_n \\ \label{cj} \tau_n  \s_i = \s_{i-1} \tau_{n+1} , \quad &&
1 \leq i \leq n , \quad \tau_n  \s_0 = \s_n  \tau_{n+1}^2 \\
\label{ce} \tau_n^{n+1} &=& \Id_{[n]}  \, .
\end{eqnarray*}

% There is the category $\D C^{\rm op}$ with the set of objects $[n]$, $n\in \Nb$, and the
% morphisms generated by the faces $\p_i: [n]\lra [n-1]$, 
% $0\leq i \leq n$, the degeneracies $\s_j:[n]\lra [n+1]$, $0\leq j\leq n$, and the cyclic operators $\tau_n:[n]\lra [n]$ subject to the relations
% \begin{equation*}
% \p_i \p_j = \p_{j-1} \p_i, \, \, i < j  , \qquad \s_i\s_j  = \s_{j+1}\s_i,  \, \,  i \leq j
% \end{equation*}
% \begin{equation*}
%   \p_i\s_j = \left\{ \begin{matrix} \s_{j-1}\p_i   \hfill &i < j
% \hfill \cr \Id_{[n]} \hfill &\hbox{if} \ i=j \ \hbox{or} \ i = j+1 \cr
% \s_j\p_{i-1} \hfill &i > j+1 \hfill \cr
% \end{matrix} \right.
% \end{equation*}
% \begin{eqnarray*}
% \p_i\tau_n  = \tau_{n-1}\p_{i-1} ,
%  \quad && 1 \leq i \leq n ,  \quad \p_0\tau_n  =\p_n \\ 
% \s_i \tau_n  = \tau_{n+1}\s_{i-1} , \quad &&
% 1 \leq i \leq n , \quad \s_0\tau_n = \tau_{n+1}^2\s_n \\
%  \tau_n^{n+1} &=& \Id_{[n]}  \, .
% \end{eqnarray*}

We call a functor of the form $F: \D C^{\rm op} \lra \lmod{k}$ a
cyclic module, and a functor of the form $G: \D C\lra \lmod{k}$ a
cocyclic module.

\subsection{The dihedral category}

Our main references for this subsection are \cite{Loda87} and
\cite{Loday-book}.

There is a straight extension of $\D C$ to a larger category $\D D$
called the dihedral category, with the same set of objects, containing
$\D C$ as a subcategory.  The essential difference is that the
endomorphisms on each object $[n]$ form the dihedral group
$D_{n+1} = \left<\tau_n,\omega_n\mid \omega_n^2=\tau_n^{n+1}=\omega_n
  \tau_n\omega_n\tau_n=\Id_{[n]}\right>$
of order $2(n+1)$.  The rest of the relations, between the morphisms
are
\begin{equation*} 
\partial_i \omega_n  = \omega_{n-1}\partial_{n-i},\qquad \s_i\omega_n =  \omega_{n+1} \s_{n-i},\qquad 0\leq i\leq n.
\end{equation*}
Similarly as before, a functor $F: \D D^{\rm op} \lra \lmod{k}$ is
called a dihedral module.  In the opposite case, we call a functor
$F: \D D\lra \lmod{k}$ a codihedral module.

\subsection{Cohomology} Our main references for this section are
\cite{Conn83,Loday-book}.

We define a cosimplicial module $\CH_\bullet^\D$
\begin{equation*}
  \CH_n^\D =  \left<\D(n,\ \cdot\ )\right>
\end{equation*}
and the face maps $\partial_i\colon\CH_n\to \CH_{n-1}$ are defined by
pre-composition
\begin{equation*}
  \partial_i(\psi) = \psi\circ \partial_i\in \D(n-1,m)
\end{equation*}
for all $\psi\colon n\to m$. Then we define a differential
$d_n = \sum_{i=0}^n (-1)^i\partial_i$.  This differential graded
cosimplicial module is a resolution of the cosimplicial module
$k_\bullet$.  If we let
$\CH^{\D C}_n = \CB^{\D C}_n = \left<\D C(n,\ \cdot\ )\right>$ and we
let
\begin{equation*}
  d^\CH_n = \sum_{i=0}^n (-1)^i\partial_i
  \quad \text{ and }\quad
  d^\CB_n = \sum_{i=0}^{n-1} (-1)^i\partial_i
\end{equation*}
with $N_\bullet = \sum_{i=0}^n t_n^i$, then the bicomplex $\CC_\bullet$
\begin{equation*}
  \CH_\bullet\xla{1-t_\bullet} \CB_\bullet \xla{N_*} \CH_\bullet\xla{1-t_\bullet}\cdots
\end{equation*}
is a resolution of the cocyclic module $k_\bullet$ in the category of
cocyclic modules.  Then we also would see that 
\begin{equation*}
  \CC_\bullet\xla{1-\omega_\bullet} \CC_\bullet \xla{1+\omega_\bullet} \CC_\bullet
  \xla{1-\omega_\bullet}\cdots
\end{equation*}
is going to be a projective resolution of $k_\bullet$ in the category
of codihedral modules, once we replace $\D C$ with $\D D$ in the
definitions of $\CB_\bullet$ and $\CH_\bullet$.

\begin{definition}
Let $\C{Z}$ be one of $\D$, $\D C$, or $\D D$, $X_\bullet$ a right $\C{Z}$-module, and $Y_\bullet$ a left
  $\C{Z}$-module.  Then the (co)homology of $X_\bullet$ and
  $Y_\bullet$ are defined as
  \begin{equation*}
    H\C{Z}_\bullet(X_\bullet) = \tor^\C{Z}_\bullet(X_\bullet,k_\bullet)
    \quad\text{ and }\quad
    H\C{Z}^\bullet(Y_\bullet) = \ext_\C{Z}^\bullet(k_\bullet,Y_\bullet)
  \end{equation*}
\end{definition}

\section{Algebras and coalgebras}\label{Sect2}

\subsection{Cyclic (co)homology of algebras}

With these notations at hand we can interpret the Hochschild and the cyclic (co)homology as derived functors. Namely, given an algebra $\mathcal{A}$ we define a cyclic module $\mathbf{C}_\bullet(\C{A})\to \lmod{k}$ by 

\begin{equation*}
\mathbf{C}_\bullet(\C{A})= \bigoplus_{n\geq 0} \C{A}^{\ot\,n+1}.
\end{equation*}
whose structure maps are defined as follows:
\begin{align*}
  \partial_i(a_0\otimes\cdots\otimes a_n) = & 
  \begin{cases}
    (\cdots\otimes a_{-1}\otimes a_i a_{i+1}\otimes a_{i+2}\otimes\cdots) & \text{ if } 0\leq i\leq n-1\\
    (a_na_0\otimes a_1\otimes\cdots\otimes a_{n-1}) & \text{ if } i=n
  \end{cases}\\
  \s_j(a_0\otimes\cdots\otimes a_n) = & (\cdots\otimes a_i\otimes 1\otimes a_{i+1}\otimes\cdots)\\
  \tau_n(a_0\otimes\cdots\otimes a_n) = & a_n\otimes a_0\otimes\cdots\otimes a_{n-1}
\end{align*}
for $0\leq i\leq n+1$ and $0\leq j\leq n$.  

One can realize the Hochschild and the cyclic homologies as
derived functors of simplicial and cyclic modules
\begin{equation*} 
HH_n(\C{A}) = \tor^{\D}_n(\mathbf{C}_\bullet(\C{A}),k_\bullet) \quad \text{ and } \quad HC_n(\C{A}) = \tor^{\D C}_n(\mathbf{C}_\bullet(\C{A}),k_\bullet).
\end{equation*}
For the Hochschild and cyclic cohomologies we get
\begin{equation*} 
HH^n(\C{A}) = \ext_{\D}^n(k_\bullet,\mathbf{C}^\bullet(\C{A})) \quad \text{ and } \quad HC^n(\C{A}) = \ext_{\D C}^n(k_\bullet,\mathbf{C}^\bullet(\C{A})),
\end{equation*}
where, this time, our cocyclic module $\mathbf{C}^\bullet(\C{A})$ is given by $\mathbf{C}^n(\C{A}) = \Hom(\C{A}^{\ot\,n+1},k)$.  More generally, for an algebra $\C{A}$ and an $\C{A}$-bimodule $V$, the cocyclic module $\mathbf{C}^n(\C{A},V) := \Hom(\C{A}^{\ot\,n},V)$ has the coface maps $d_i\colon \mathbf{C}^{n-1}(\C{A},V) \lra \mathbf{C}^n(\C{A},V)$
\begin{align*}
  d_i\vp(a_1,\ldots, a_n) = &
  \begin{cases}
    a_1\cdot \vp(a_2\ldots, a_n),            & \text{ if } i = 0\\
    \vp(a_1,\ldots, a_i a_{i+1},\ldots, a_n), & \text{ if } 1\leq i \leq n-1 \\
    \vp(a_1,\ldots, a_{n-1})\cdot a_n,        & \text{ if } i=n
  \end{cases}
\end{align*}
and the codegeneracy maps
$s_j\colon\mathbf{C}^{n+1}(\C{A},V) \lra \mathbf{C}^n(\C{A},V)$
\begin{align*}
 s_j\vp(a_1,\ldots, a_n) = \vp(a_1,\ldots, a_j,1,a_{j+1},\ldots, a_n).
\end{align*}
where $0\leq j\leq n$.  In particular, for $V=\C{A}^\vee = \Hom(\C{A},k)$, the above
structure is equivalent to the one given by
$d_i\colon \mathbf{C}^{n-1}(\C{A}) \lra \mathbf{C}^n(\C{A})$
\begin{align*}
  d_i\vp(a_0,\ldots, a_n) = & 
   \begin{cases}
     \vp(a_0,\ldots, a_i a_{i+1},\ldots, a_n), & \text{ if } 0\leq i\leq n-1 \\
     \vp(a_n a_0,\ldots, a_{n-1}),             & \text{ if } i=n
   \end{cases}
\end{align*}
and $s_j\colon\mathbf{C}^{n+1}(\C{A}) \lra \mathbf{C}^n(\C{A})$ 
\begin{align*}
s_j\vp(a_0,\ldots, a_n) = \vp(a_0,\ldots, a_j,1,a_{j+1},\ldots, a_n).
\end{align*}
where $0\leq j\leq n$.  In this case, the cyclic maps
$t_n:\mathbf{C}^n(\C{A}) \lra \mathbf{C}^n(\C{A})$ are given by
\begin{align*}
 t_n\vp(a_0,\ldots, a_n) = \vp(a_n, a_0,\ldots, a_{n-1}).
\end{align*}
for every $n\geq 1$.

\subsection{Hochschild (co)homology of $\ast$-algebras}

Let $\C{A}$ be an algebra with an involution $*:\C{A}\lra \C{A}$, that is,
\begin{equation*}
(ab)^*=b^*a^*,\quad a^{**}=a
\end{equation*}
for any $a,b\in \C{A}$. We call such an algebra a $*$-algebra. Let also $V$ be an $\C{A}$-bimodule with an involution $*:V\lra V$ satisfying
\begin{equation*}
(a\cdot v\cdot b)^* = b^* \cdot v^* \cdot a^*,\quad v^{**}=v
\end{equation*}
for any $a,b\in \C{A}$, and any $v\in V$. Such a bimodule is called a $*$-bimodule. One then defines on the Hochschild cohomology complex
\begin{equation*}
w_n\vp (a_1,\ldots, a_n) := \vp(a_n^*,a_{n-1}^*,\ldots,a_1^*)^*
\end{equation*}
for any $\vp \in \mathbf{C}^n(\C{A},V)$.

\begin{lemma}
On $\mathbf{C}^{n-1}(\C{A},V)$ we have $w_n d_i = d_{n-i}w_{n-1}$ for $0\leq i \leq n$.
\end{lemma}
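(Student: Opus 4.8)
The plan is to verify the identity by direct evaluation on a generic element $a_1\ot\cdots\ot a_n$, splitting into the three cases that match the piecewise definition of $d_i$. The crucial bookkeeping device is the substitution hidden in $w_n$: applying $w_n$ to any cochain in $\mathbf{C}^n(\C{A},V)$ feeds it the reversed, starred arguments $b_j=a_{n+1-j}^*$ and then stars the output. Under this reversal the slot $i$ at which $d_i$ acts is sent to the slot $n-i$, which is precisely the slot at which $d_{n-i}$ acts on $w_{n-1}\vp$; so the claimed identity $w_n d_i=d_{n-i}w_{n-1}$ is really the assertion that ``collapse-then-reverse'' equals ``reverse-then-collapse'' at the mirror position. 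I would organize the proof around making this matching explicit.

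For the inner faces $1\leq i\leq n-1$ I would compute $w_n d_i\vp(a_1,\ldots,a_n)=(d_i\vp)(a_n^*,\ldots,a_1^*)^*$, where the multiplication inside $d_i$ now merges the two adjacent starred entries $a_{n+1-i}^*$ and $a_{n-i}^*$. The anti-multiplicativity $(ab)^*=b^*a^*$ rewrites this product as $(a_{n-i}a_{n+1-i})^*$, so $\vp$ receives the reversed tuple with a single merged entry in position $n-i$. On the other side, since $1\leq n-i\leq n-1$ the face $d_{n-i}$ is also an inner face and merges $a_{n-i}a_{n-i+1}$ before $w_{n-1}$ reverses and stars; the resulting argument of $\vp$ is the identical reversed tuple. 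Hence the two sides agree, using only $(ab)^*=b^*a^*$ and $a^{**}=a$.

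The two boundary cases $i=0$ and $i=n$ are where the bimodule structure on $V$ enters and where I expect the only real care to be needed. For $i=0$ the face $d_0$ contributes a left action $a_n^*\cdot\vp(\ldots)$, and starring via the $*$-bimodule rule $(a\cdot v)^*=v^*\cdot a^*$ together with $a^{**}=a$ turns it into a right action by $a_n$; this is exactly $d_n w_{n-1}\vp=d_{n-0}w_{n-1}\vp$, matching $i\leftrightarrow n-i$. Symmetrically, $i=n$ produces a right action $\vp(\ldots)\cdot a_1^*$ whose star, via $(v\cdot b)^*=b^*\cdot v^*$, becomes the left action $a_1\cdot(\cdots)$ realizing $d_0 w_{n-1}\vp$. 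In both cases the single merged or boundary slot again sits at the mirror position, so the identity closes.

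The computation is routine once the reversal is tracked correctly; the main obstacle is purely combinatorial, namely confirming the off-by-one indexing so that the collapse at position $i$ reappears at position $n-i$ after the reversal, and checking that on the two boundary faces the left and right module actions are interchanged in the direction forced by $(a\cdot v\cdot b)^*=b^*\cdot v^*\cdot a^*$. I would treat the boundary cases first, since that is where the $*$-bimodule axiom is genuinely used and where a wrong convention would surface immediately.
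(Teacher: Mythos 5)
Your proposal is correct and takes essentially the same approach as the paper's proof: direct evaluation on a generic tuple, handling the inner faces $1\leq i\leq n-1$ via the anti-multiplicativity $(ab)^*=b^*a^*$ (with the merged entry landing at the mirror position $n-i$ after reversal) and the boundary faces $i=0,n$ via the $*$-bimodule axiom exchanging the left and right actions. The only difference is one of emphasis: the paper writes out the inner case explicitly and dispatches $i=0$ and $i=n$ with ``similarly,'' whereas you treat the boundary cases in detail --- a harmless elaboration that correctly identifies where the bimodule structure is actually used.
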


\begin{proof}
For $1\leq i\leq n-1$, and for $\vp\in \mathbf{C}^{n-1}(\C{A},V)$, we have
\begin{align*}
w_n(d_i\vp)(a_1,\ldots, a_n) 
 = & (d_i\vp)(a_n^*,\ldots, a_1^*)^* \\
 = & \vp(a_n^*,\ldots, a_{n-i+1}^*a_{n-i}^*,\ldots, a_1^*)^* \\
 = & \vp(a_n^*,\ldots, (a_{n-i}a_{n-i+1})^*,\ldots, a_1^*)^* \\
 = & d_{n-i}(w_{n-1}\vp)(a_1,\ldots, a_n).
\end{align*}
Similarly, $w_n d_0 = d_{n}w_{n-1}$, and $w_n d_n = d_0w_{n-1}$.
\end{proof}

As a result, we have the following:

\begin{lemma}
Given a $*$-algebra $\C{A}$ and a $*$-bimodule $V$, on $\mathbf{C}^{n-1}(\C{A},V)$ we have
\begin{equation*}
w_n b = (-1)^n b w_{n-1}.
\end{equation*}
\end{lemma}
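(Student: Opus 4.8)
The plan is to reduce the statement to the previous lemma and then sum up with correct signs. Recall that the Hochschild coboundary on $\mathbf{C}^{n-1}(\C{A},V)$ is $b = \sum_{i=0}^n (-1)^i d_i$, so the claim $w_n b = (-1)^n b\, w_{n-1}$ is a purely formal consequence of the relation $w_n d_i = d_{n-i} w_{n-1}$ established above, once one is careful about the range of summation and re-indexing. First I would write out both sides explicitly:
\begin{equation*}
w_n b = w_n \sum_{i=0}^n (-1)^i d_i = \sum_{i=0}^n (-1)^i w_n d_i = \sum_{i=0}^n (-1)^i d_{n-i} w_{n-1}.
\end{equation*}

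Next I would perform the substitution $j = n-i$, so that as $i$ runs from $0$ to $n$, the index $j$ runs from $n$ down to $0$, and $(-1)^i = (-1)^{n-j} = (-1)^n (-1)^{-j} = (-1)^n (-1)^j$. This converts the sum into
\begin{equation*}
\sum_{j=0}^n (-1)^n (-1)^j d_j w_{n-1} = (-1)^n \Bigl(\sum_{j=0}^n (-1)^j d_j\Bigr) w_{n-1} = (-1)^n b\, w_{n-1},
\end{equation*}
which is exactly the desired identity. The key algebraic observation is simply that the involution $w$ reverses the ordering of the face maps, and the reversal of an alternating sum of $n+1$ terms picks up the global sign $(-1)^n$.

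The only point requiring genuine care — and the step I would flag as the main obstacle — is bookkeeping the boundary cases $i=0$ and $i=n$, since the previous lemma treats $w_n d_0 = d_n w_{n-1}$ and $w_n d_n = d_0 w_{n-1}$ as special (the outer faces involve the bimodule action rather than an internal multiplication). I would verify that these two cases fit the uniform formula $w_n d_i = d_{n-i} w_{n-1}$ for $i=0$ and $i=n$ respectively, which they do, so no separate treatment of the endpoints is needed in the summation. With that confirmed, the re-indexing argument applies across the full range $0 \leq i \leq n$ without exception, and the proof is complete.
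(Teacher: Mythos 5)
Your proof is correct and takes essentially the same approach as the paper: the paper's proof just writes out the same alternating sum $\sum_{i=0}^n (-1)^i d_{n-i} w_{n-1}$ on elements $\vp(a_1,\ldots,a_n)$, relying on the relations $w_n d_i = d_{n-i} w_{n-1}$ (including the boundary cases $i=0$ and $i=n$, which the preceding lemma covers exactly as you checked) and the sign flip your substitution $j = n-i$ makes explicit. There is nothing to add.
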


\begin{proof}
For an arbitrary $\vp \in \mathbf{C}^{n-1}(\C{A},V)$ we have
\begin{align*}
w_n b\vp(a_1,\ldots, a_n)
 = & b\vp(a_n^*,\ldots, a_1^*)^* \\
 = & a_n^*\cdot \vp(a_{n-1}^*, \ldots, a_1^*)^* \\
   & + \sum_{i= 1}^{n-1}(-1)^i\vp(a_n^*,\ldots, a_{n-i+1}^*a_{n-i}^*,\ldots, a_1^*)^* \\
   & + (-1)^n\vp(a_n^*,\ldots, a_2^*)\cdot a_1^* \\
 % = & (-1)^n(-1)^nd_0(\om_{n-1}\vp)(a_1,\ldots,a_n) \\
 %   & + (-1)^n \sum_{i=1}^{n-1}(-1)^{n-i}\om_{n-1}\vp(a_1,\ldots, a_{n-i}a_{n-i+1},\ldots, a_n) \\
 %   & + (-1)^nd_n(\om_{n-1}\vp)(a_2,\ldots, a_n) \\
 = & (-1)^nbw_{n-1}\vp(a_1,\ldots, a_n).
\end{align*}
\end{proof}

Now, assuming $2$ is invertible in $k$, based on the decomposition of the Hochschild complex into the $+1$ and $-1$-eigenspaces of the
involution operator it follows that
\begin{equation*}
HH^\bullet(\C{A},V) = HH^\bullet_+(\C{A},V) \oplus HH^\bullet_-(\C{A},V),
\end{equation*}
see for instance \cite[5.2.3]{Loday-book}, and in particular for $V=\C{A}^\vee=\Hom(\C{A},k)$,
\begin{equation*}
  HH^\bullet(\C{A}) = HH^\bullet_+(\C{A}) \oplus HH^\bullet_-(\C{A}).
\end{equation*}
Explicitly, $HH^\bullet_+(\C{A},V)$ (resp. $HH^\bullet_+(\C{A})$) is the cohomology of the $w$-invariant (real) subcomplex, and $HH^\bullet_-(\C{A},V)$ (resp. $HH^\bullet_-(\C{A})$) is the cohomology of the $w$-anti-invariant (imaginary) subcomplex.

\subsection{Dihedral cohomology of $*$-algebras}

We next record the following on the restriction of the $*$-structure on the cyclic complex.
\begin{lemma}
On $\mathbf{C}^n(\C{A}):=\mathbf{C}^n(\C{A},\C{A}^\ast)$, we have 
\begin{equation*}
t_nw_n = w_nt_n^{-1}.
\end{equation*}
\end{lemma}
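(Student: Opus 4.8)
The plan is to verify the identity $t_n w_n = w_n t_n^{-1}$ by direct computation on an arbitrary cochain $\vp \in \mathbf{C}^n(\C{A})$, evaluating both sides on a generic tuple $(a_0, \ldots, a_n)$ and comparing the results. Since $\mathbf{C}^n(\C{A}) = \mathbf{C}^n(\C{A}, \C{A}^\vee)$ carries the explicit formulas for $t_n$ and $w_n$ recalled above, this is essentially a bookkeeping exercise in tracking how the cyclic rotation interacts with the order-reversal-plus-involution that defines $w_n$. First I would record that $t_n^{-1} = t_n^n$ acts by rotating in the opposite direction, so that $t_n^{-1}\vp(a_0, \ldots, a_n) = \vp(a_1, \ldots, a_n, a_0)$, which follows from the relation $\tau_n^{n+1} = \Id$ and the given formula for $t_n$.

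The computation itself I would organize as follows. On the left-hand side, I apply $w_n$ first, using $w_n\vp(a_0, \ldots, a_n) = \vp(a_n^*, a_{n-1}^*, \ldots, a_0^*)^*$, and then apply $t_n$, which cyclically permutes the arguments before $w_n$ sees them; concretely $t_n w_n \vp(a_0, \ldots, a_n) = w_n\vp(a_n, a_0, \ldots, a_{n-1})$, and I then expand this with the defining formula for $w_n$. On the right-hand side, I apply $t_n^{-1}$ first and then $w_n$, yielding $w_n t_n^{-1}\vp(a_0, \ldots, a_n) = (t_n^{-1}\vp)(a_n^*, \ldots, a_0^*)^*$, which I expand using the backward-rotation formula. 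The goal is to see that both sides produce the same reversed-and-starred tuple fed into $\vp$, with the outer involution applied once; the key point is that reversing the order of the arguments (the $w$-operation) converts a forward cyclic shift into a backward one, which is exactly the content of $t_n w_n = w_n t_n^{-1}$.

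The main obstacle, such as it is, lies in correctly matching the index bookkeeping at the two ``boundary'' positions of the cyclic rotation, since both $t_n$ and $w_n$ treat the $0$-th slot asymmetrically relative to the others (the rotation moves $a_n$ into the front, while $w_n$ sends the front slot to the back after starring). I would therefore be careful to write out the permuted tuples explicitly on both sides rather than arguing by symmetry, and to confirm that the single outer $*$ applied by $w_n$ lands consistently. I expect no genuine difficulty beyond this: the identity is the cochain-level shadow of the abstract dihedral relation $\tau_n \omega_n = \omega_n \tau_n^{-1}$ (equivalently $\omega_n \tau_n \omega_n \tau_n = \Id$ from the presentation of $D_{n+1}$), so once the indices are aligned the two sides coincide termwise, and the lemma follows.
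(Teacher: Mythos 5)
Your proof is correct and takes essentially the paper's route: a direct termwise evaluation of both sides on $(a_0,\ldots,a_n)$, using $t_n^{-1}\vp(a_0,\ldots,a_n)=\vp(a_1,\ldots,a_n,a_0)$ and the explicit formula for $w_n$, exactly as in the paper. One caveat on conventions, though: the operator the lemma refers to is the one inherited from the Hochschild involution on $\mathbf{C}^n(\C{A},\C{A}^\vee)$, and under the identification $\vp(a_1,\ldots,a_n)(a_0)=\vp(a_0,a_1,\ldots,a_n)$ it reads $w_n\vp(a_0,\ldots,a_n)=\wbar{\vp(a_0^*,a_n^*,\ldots,a_1^*)}$ --- the zeroth slot stays put and only $a_1,\ldots,a_n$ are reversed --- whereas you recall the full reversal $w_n\vp(a_0,\ldots,a_n)=\wbar{\vp(a_n^*,\ldots,a_0^*)}$. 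This slip is harmless for the identity itself: your operator equals $t_n$ composed with the paper's $w_n$, and conjugating a reflection by a rotation yields another reflection, so $t_nw_n=w_nt_n^{-1}$ holds for either choice; indeed your computation correctly shows both sides equal $\wbar{\vp(a_{n-1}^*,\ldots,a_0^*,a_n^*)}$. But the two operators differ by the factor $(-1)^n$ on cyclic cochains (those with $t_n\vp=(-1)^n\vp$), so to reproduce the paper's eigenspace decomposition $HD^\bullet_\pm(\C{A})$ on the nose you should work with the paper's formula; with yours, the labels of the two summands get swapped in odd degrees, even though the splitting itself is the same.
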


\begin{proof}
For any $\vp\in \mathbf{C}^n(\C{A})$, we first recall that $w_n(\vp)(a_0,\ldots, a_n)=\wbar{\vp(a_0^*,a_n^*,\ldots,a_1^*)}$. Then the claim follows from
\begin{align*}
  t_nw_n\vp(a_0,\ldots, a_n)
  = & w_n\vp(a_n,a_0,\ldots, a_{n-1})\\
  = & \wbar{\vp(a_n^*,a_{n-1}^*,\ldots, a_0^*)} \\
  = & \wbar{t_n^{-1}\vp(a_0^*,a_n^*,\ldots, a_1^*)}\\
  = & w_nt_n^{-1}\vp(a_0,\ldots, a_n).
\end{align*}
\end{proof}

As a result, if $\vp \in \mathbf{C}_\lambda^n(\C{A})$, \ie $t_n\vp = (-1)^n\vp$, then $t_nw_n\vp = w_nt_n^{-1}\vp = (-1)^nw_n\vp$, that is $w_n\vp \in \mathbf{C}_\lambda^n(\C{A})$, therefore, we obtain the similar eigen-space decomposition
\begin{equation*} 
HC^\bullet(\C{A}) = HC^\bullet_+(\C{A})\oplus HC^\bullet_-(\C{A}).
\end{equation*}
The summands are both called the dihedral cohomology of $\C{A}$, and are denoted by $HD^\bullet_\pm(\C{A})$.

\subsection{Dihedral cohomology of $*$-coalgebras}

Let us recall from \cite[Sect. 2.2]{FariSolo96} the involutive coalgebras and their (involutive) comodules. A $*$-coalgebra $\C{C}$ is a coalgebra such that
\begin{equation*} 
\D(c^*) = c_{(2)}^*\otimes c_{(1)}^*, \qquad \ve(c^*) = \wbar{\ve(c)}.  
\end{equation*}
An involutive $\C{C}$-bicomodule ($*$-bicomodule) $V$ is a $\C{C}$-bicomodule such that 
\begin{equation}\label{star-bicomodule}
(v^*)\ns{0}\ot(v^*)\ns{1} = {v\ns{0}}^*\ot {v\ns{-1}}^*,\qquad (v^*)\ns{-1}\ot(v^*)\ns{0}={v\ns{1}}^* \ot {v\ns{0}}^*.
\end{equation}
We recall from \cite{Doi81} the coalgebra Hochschild cohomology of a coalgebra $\C{C}$ with coefficients in a $\C{C}$-bicomodule $V$ is given as the homology of the complex
\begin{equation*}
\mathbf{C}^\bullet(\C{C},V) = \bigoplus_{n\geq 0} V\ot \C{C}^{\ot \,n}
\end{equation*}
with the structure maps $d_i\colon \mathbf{C}^{n-1}(\C{C},V) \lra \mathbf{C}^n(\C{C},V)$
\begin{align*}
d_i\vp(v\ot c_1 \odots c_{n-1}) = &
  \begin{cases}
    v\ns{0}\ot v\ns{1}\ot c_1 \odots c_{n-1}, & \text{ if } i=0,\\
    v\ot c_1 \odots \D(c_i) \odots c_{n-1},   & \text{ if } 1\leq i \leq n-1, \\
    v\ns{0}\ot c_1 \odots c_{n-1}\ot v\ns{-1} & \text{ if } i=n,
  \end{cases}
\end{align*}
and $s_j\colon\mathbf{C}^{n+1}(\C{C},V) \lra \mathbf{C}^n(\C{C},V)$
\begin{align*}
 s_j(v\ot c_0 \odots c_n) = v\ot c_0 \odots \ve(c_j)\odots c_n,
\end{align*}
where $0\leq j\leq n$.

\begin{lemma}\label{HopfHochschildCompatibleWithStar}
On $\mathbf{C}^{n-1}(\C{C},V)$ we have $w_n d_i = d_{n-i}w_{n-1}$ for $0\leq i \leq n$.
\end{lemma}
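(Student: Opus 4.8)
The plan is to carry out a direct computation dual to the one proving the corresponding $*$-algebra lemma above. First I would pin down the operator $w_n$ on $\mathbf{C}^n(\C{C},V)=V\ot\C{C}^{\ot n}$, namely the conjugate-linear map that fixes the $V$-slot and reverses the coalgebra factors while applying the involutions,
\[
  w_n(v\ot c_1\odots c_n) = v^*\ot c_n^*\odots c_1^*,
\]
which is the predual analogue of the formula $w_n\vp(a_0,\ldots,a_n)=\wbar{\vp(a_0^*,a_n^*,\ldots,a_1^*)}$ used for algebras. With this in hand the identity $w_n d_i = d_{n-i}w_{n-1}$ splits into two regimes: the inner indices $1\leq i\leq n-1$, where $d_i$ applies the coproduct of $\C{C}$, and the boundary indices $i\in\{0,n\}$, where $d_i$ applies one of the two coactions of $V$.

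For the inner case I would start from $\eta=v\ot c_1\odots c_{n-1}$, apply $d_i$ to replace $c_i$ by $\D(c_i)=(c_i)_{(1)}\ot(c_i)_{(2)}$, and then apply $w_n$ to reverse and star everything. On the other side $w_{n-1}$ first reverses and stars, carrying $c_i$ into slot $n-i$ as $c_i^*$, after which $d_{n-i}$ applies the coproduct to that slot as $\D(c_i^*)$. The two outputs coincide precisely because the $*$-coalgebra axiom $\D(c^*)=c_{(2)}^*\ot c_{(1)}^*$ reverses the order of the coproduct, exactly compensating the order reversal built into $w_n$.

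For the boundary cases I would invoke the $*$-bicomodule relations~\eqref{star-bicomodule}. When $i=0$, the map $d_0$ inserts the right coaction, producing $v\ns{0}\ot v\ns{1}\ot c_1\odots c_{n-1}$; applying $w_n$ pushes $(v\ns{1})^*$ to the last slot and leaves $(v\ns{0})^*$ in the $V$-position. On the other side $d_n w_{n-1}$ reverses and then inserts the left coaction of $v^*$ into the last slot, giving $(v^*)\ns{0}\ot c_{n-1}^*\odots c_1^*\ot(v^*)\ns{-1}$, and the relation ${(v^*)\ns{-1}}\ot{(v^*)\ns{0}}={v\ns{1}}^*\ot{v\ns{0}}^*$ identifies the two expressions. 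The case $i=n$ is entirely symmetric, with the roles of the two coactions exchanged and the companion relation ${(v^*)\ns{0}}\ot{(v^*)\ns{1}}={v\ns{0}}^*\ot{v\ns{-1}}^*$ performing the matching.

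I expect the only genuine obstacle to be the bookkeeping: tracking how the index $i$ is sent to $n-i$ under the reversal, and in particular checking that $w_n$ carries the slot populated by the right coaction (the source of $d_0$) into the slot acted on by the left coaction (the source of $d_n$), so that the correct member of~\eqref{star-bicomodule} applies in each boundary case. Once this position-reversal is made explicit, every instance of the identity collapses to a single use of either the $*$-coalgebra axiom or one $*$-bicomodule relation, exactly paralleling the $*$-algebra computation, and no signs appear.
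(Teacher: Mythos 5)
Your proposal is correct and takes essentially the same route as the paper's own proof: the explicit formula $w_n(v\ot c_1\odots c_n)=v^*\ot c_n^*\odots c_1^*$, the $*$-coalgebra axiom $\D(c^*)={c\ps{2}}^*\ot {c\ps{1}}^*$ handling the inner cofaces $1\leq i\leq n-1$, and the two relations of \eqref{star-bicomodule} handling $i=0$ and $i=n$ exactly as you pair them. The only difference is cosmetic: the paper writes out only the inner case and dismisses the boundary cases with ``similarly,'' whereas you make the position-reversal bookkeeping at the boundary explicit.
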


\begin{proof}
For $1\leq i\leq n-1$, and for $v\ot c_1 \odots c_{n-1}\in \mathbf{C}^{n-1}(\C{C},V)$, we have
\begin{align*}
  w_n d_i(v\ot c_1 \odots c_{n-1})
  = & w_n(v\ot c_1 \odots {c_i}\ps{1}\ot {c_i}\ps{2} \odots c_{n-1}) \\
  = & v^*\ot c_{n-1}^* \odots ({c_i}\ps{2})^*\ot ({c_i}\ps{1})^* \odots c_1^*\\
  = & d_{n-i}(v^*\ot c_{n-1}^* \odots c_i^* \odots c_1^*) \\
  = & d_{n-i}w_{n-1}(v\ot c_1 \odots c_{n-1}).
\end{align*}
Similarly, $w_n d_0 = d_{n}w_{n-1}$, and $w_n d_n = d_0w_{n-1}$.
\end{proof}

We thus conclude the commutation with the Hochschild coboundary map.

\begin{corollary}
Given a $*$-coalgebra $\C{C}$ and a $*$-bicomodule $V$, on $\mathbf{C}^{n-1}(\C{C},V)$ we have
\begin{equation*}
w_n b = (-1)^n b w_{n-1}.
\end{equation*}
\end{corollary}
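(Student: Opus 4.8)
The plan is to deduce the corollary directly from Lemma~\ref{HopfHochschildCompatibleWithStar}, mirroring the way the analogous $*$-algebra identity was obtained from its own face-map lemma. The starting point is to write the coalgebra Hochschild coboundary in degree $n$ as the alternating sum of the coface maps recalled just above, namely $b = \sum_{i=0}^{n}(-1)^{i}d_{i}$, where each $d_{i}\colon \mathbf{C}^{n-1}(\C{C},V)\lra \mathbf{C}^{n}(\C{C},V)$. This places $b$ precisely in the degree on which $w_{n}$ operates, so that the composite $w_{n}b$ makes sense on $\mathbf{C}^{n-1}(\C{C},V)$.

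Next I would interchange $w_{n}$ with the finite sum, obtaining $w_{n}b = \sum_{i=0}^{n}(-1)^{i}\,w_{n}d_{i}$; this is legitimate using the additivity of $w_{n}$ together with the fact that the real signs $(-1)^{i}$ are unaffected by the conjugation built into $w_{n}$. Applying Lemma~\ref{HopfHochschildCompatibleWithStar} term by term — which is valid over the full range $0 \leq i \leq n$, the two boundary cases $i=0$ and $i=n$ having been checked separately in that lemma — replaces each $w_{n}d_{i}$ by $d_{n-i}w_{n-1}$, so that $w_{n}b = \sum_{i=0}^{n}(-1)^{i}\,d_{n-i}\,w_{n-1}$.

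All that remains is a reindexing of the sum. Substituting $j = n-i$, as $i$ runs from $0$ to $n$ so does $j$, and the sign becomes $(-1)^{i} = (-1)^{n-j} = (-1)^{n}(-1)^{j}$. Factoring the global sign $(-1)^{n}$ out of the sum and recognizing the remaining expression $\sum_{j=0}^{n}(-1)^{j}d_{j}$ as $b$ (now acting after $w_{n-1}$) gives $w_{n}b = (-1)^{n}\,b\,w_{n-1}$, as claimed. I expect no genuine obstacle here: the entire content of the corollary is already packaged in the preceding lemma, and the remaining work is purely the arithmetic of the alternating signs. The one point worth a moment's care is that the reflection $i \mapsto n-i$ on the face indices is exactly the combinatorial mechanism by which the order-reversal built into $w_{\bullet}$ converts the coboundary into itself up to the overall sign $(-1)^{n}$.
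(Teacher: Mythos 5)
Your proposal is correct and takes essentially the same route as the paper: the corollary is exactly the formal consequence of Lemma~\ref{HopfHochschildCompatibleWithStar} obtained by writing $b=\sum_{i=0}^{n}(-1)^{i}d_{i}$, commuting $w_{n}$ past the sum, and reindexing $j=n-i$ to extract the global sign $(-1)^{n}$. The paper's own proof carries out the identical sign bookkeeping, only written out explicitly on an element $v\ot c_{1}\odots c_{n-1}$ rather than at the operator level, so there is no substantive difference.
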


\begin{proof}
For an arbitrary $v\ot c_1 \odots c_{n-1} \in \mathbf{C}^{n-1}(\C{C},V)$ we have
\begin{align*}
w_n b(v\ot c_1 & \odots c_{n-1}) \\
 % = & \om_n(v_{(0)}\ot v_{(1)} \ot c_1 \odots c_{n-1}) \\
 %   & + \sum_{i= 1}^{n-1}(-1)^i \om_n(v\ot c_1\odots \D(c_i)\odots c_{n-1})\\
 %   & + (-1)^{n} \om_n(v_{(0)}\ot c_1\odots c_{n-1}\ot v_{(-1)})\\
 = & ({v\ns{0}}^* \ot c_{n-1}^* \odots c_1^* \ot {v\ns{1}}^*) \\
   & + \sum_{i= 1}^{n-1}(-1)^i (v \ot c_{n-1}^*\odots ({c_i}\ps{2})^*\ot ({c_i}\ps{1})^*\odots c_1^*)\\
   & + (-1)^{n} ({v\ns{0}}^*\ot {v\ns{-1}}^* \odots c_{n-1}^*\ot c_1^*)\\
 = & (-1)^n b w_{n-1}(v\ot c_1 \odots c_{n-1}).
\end{align*}
\end{proof}

As a result, we have the decomposition
\begin{equation*}
HH^\bullet(\C{C},V) = HH^\bullet_+(\C{C},V) \oplus HH^\bullet_-(\C{C},V),
\end{equation*}
of the coalgebra Hochschild cohomology into the dihedral coalgebra Hochschild cohomologies.

\section{Hopf-dihedral cohomology}\label{Sect3}

\subsection{Hopf-cyclic cohomology}

Let us first recall Hopf-cyclic cohomology from \cite{ConnMosc98,ConnMosc}. Let $\C{H}$ be a Hopf algebra with a modular pair in involution (MPI) $(\d,\s)$, \ie $\d$ is a character on $\C{H}$, and $\s\in \C{H}$ is a group-like element such that
\begin{equation*}
S_\d^2 = \Ad_\s, \qquad \d(\s)=1,
\end{equation*}
where $S_\d(h) = \d(h_{(1)})S(h_{(2)})$ is the twisted antipode. Assume also that $\C{A}$ is a (left) $\C{H}$-module algebra,
\begin{equation*}
h\rt (ab) = (h_{(1)}\rt a)(h_{(2)}\rt b), \qquad h\rt 1=\ve(h)1,
\end{equation*}
equipped with a linear form $\tau:\C{A}\lra k$ which is a $\d$-invariant $\s$-trace
\begin{equation*}
\tau(h\rt a) = \d(h)\tau(a), \qquad \tau(ab) = \tau(b(\s\rt a)).
\end{equation*}
Then the Hopf-cyclic cohomology of $\C{H}$ is defined to satisfy the following, \cite{ConnMosc}.

{\bf Ansatz:} Let $\C{A}$ be a $\C{H}$-module algebra equipped with a $\d$-invariant $\s$-trace $\tau:\C{A}\lra k$. Then the assignment
\begin{align*}
& h_1\odots h_n \mapsto \chi_\tau(h_1\odots h_n) \in \mathbf{C}^n(\C{A}), \\
& \chi_\tau(h_1\odots h_n)(a_0,\ldots, a_n)=\tau(a_0(h_1\rt a_1)\ldots (h_n\rt a_n))
\end{align*}
defines a cocyclic module $\mathbf{C}^\bullet(\C{H};\s,\d)$ on $\C{H}$ whose cohomology comes with a canonical map of the form $\chi_\tau\colon HC^n(\C{H};\d,\s)\lra HC^n(\C{A})$.

The ansatz then dictates on 
\begin{equation*}
\mathbf{C}^\bullet(\C{H};\s,\d) = \bigoplus_{n\geq 0} \C{H}^{\otimes n}
\end{equation*}
the cocyclic structure given by the maps
\begin{equation*}
d_i(h_1\odots h_{n-1}) = 
   \begin{cases}
     1\otimes h_1\odots h_{n-1} & \text{ if } i=0,\\
     h_1\odots \D(h_i)\odots h_{n-1} & \text{ if } 0<i\leq n,\\
     h_1 \odots h_n \otimes \sigma & \text{ if } i = n,
   \end{cases} 
\end{equation*}
and
\begin{equation*}
s_j(h_0\odots h_n) = h_1 \odots \ve(h_j)\odots h_n,\qquad 0\leq j\leq n.
\end{equation*}
These structure maps encode a simplicial structure $\mathbf{C}^\bullet(\C{H},\s,\d)\colon\D\to \lmod{k}$, and the maps
\begin{equation*}
 t_n(h_1 \odots h_n) = S_\d(h_1)\cdot (h_2 \odots h_n \ot \sigma)
\end{equation*}
encode the actions of the cyclic groups to the cocyclic module $\mathbf{C}^\bullet(\C{H};\s,\d)$ which is a functor of the form $\mathbf{C}^\bullet(\C{H},\s,\d)\colon\D C\to \lmod{k}$.

\subsection{Hopf $*$-algebras}

Let us next recall from \cite[Def. 1.7.5]{Majid-book} that a Hopf $*$-algebra is a Hopf algebra $\C{H}$, which is a $*$-algebra such that
\begin{align*}
\D(h^*) = {h\ps{1}}^*\ot {h\ps{2}}^*, \quad \ve(h^*)=\wbar{\ve(h)},\qquad S(h^*) = S^{-1}(h)^*
\end{align*}
for every $h\in\C{H}$.

Let now a $*$-algebra $\C{A}$ be a $\C{H}$-module algebra with the $*$-action, that is, let $\C{A}$ be a $\C{H}$-module algebra with the compatibility
\begin{equation}\label{star-S}
(h\rt a)^* = S^{-1}(h^*)\rt a^*
\end{equation}
between the $*$-structures and the action. Such $\C{A}$ is called a $\C{H}$-module $*$-algebra. See \cite[Prop. 6.1.5]{Majid-book} for the notion of $*$-action, see also \cite[Eqn. (8)\&(9)]{SchmWagn04}.

\subsection{Hopf-dihedral cohomology of a Hopf $*$-algebra}

We now investigate the $*$-structure on the Hopf-cyclic complex $\mathbf{C}(\C{H};\s,\d)$ that makes the characteristic homomorphism of Connes-Moscovici a 
$*$-homomorphism, that is, for $n \geq 0$
\begin{equation*}
w_n(\chi_\tau(h^1\odots h^n)) = \chi_\tau(w_n(h^1\odots h^n)).
\end{equation*} 
Accordingly, 
\begin{align*}
w_n(\chi_\tau(h^1\odots h^n))(a_0,\ldots, a_n) 
  = & \wbar{\chi_\tau(h^1\odots h^n)(a_0^*, a_n^*,\ldots, a_1^*)} \\
  = & \wbar{\tau(a_0^*(h^1\rt a_n^*)\ldots (h^n\rt a_1^*))} \\
  = & \wbar{\tau(a_0^*(S(h^1)^*\rt a_n)^*\ldots (S(h^n)^*\rt a_1)^*)} \\
  = & \tau((S(h^n)^*\rt a_1) \ldots (S(h^1)^*\rt a_n)a_0)\\
  = & \tau((S^{-1}({h^n}^*)\rt a_1) \ldots (S^{-1}({h^1}^*)\rt a_n)a_0) \\
  = & \tau(a_0(\s S^{-1}({h^n}^*)\rt a_1) \ldots (\s S^{-1}({h^1}^*)\rt a_n))
\end{align*}
dictates that
\begin{equation}\label{star-Hopf-cyclic}
w_n(h^1\odots h^n) = \s S^{-1}({h^n}^*) \odots \s S^{-1}({h^1}^*).
\end{equation}

\begin{lemma}
The mapping given by \eqref{star-Hopf-cyclic} is an involution on $\mathbf{C}^\bullet(\C{H};\s,\d)$.
\end{lemma}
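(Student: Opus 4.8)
The plan is to verify the defining property of an involution directly, i.e. to show $w_n\circ w_n=\Id$ on $\C{H}^{\ot n}$, by applying the formula \eqref{star-Hopf-cyclic} twice and simplifying tensor factor by tensor factor. The map $w_n$ is manifestly conjugate-linear (it is built from $*$), so the only content is that its square is the identity. The structural inputs I expect to need are that $*$ is an anti-algebra map on $\C{H}$, the Hopf-$*$ compatibility $S(h^*)=S^{-1}(h)^*$ recalled above, and the group-likeness of $\s$. From the Hopf-$*$ relation I would first extract the companion identity $S^{-1}(h^*)=S(h)^*$, obtained by substituting $S(h)$ for $h$ and applying $S^{-1}$; this is the precise form in which the antipode relation will enter.

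First I would record that $w_n$ sends $h^{1}\odots h^{n}$ to the tensor whose $j$-th factor is $\s\,S^{-1}({h^{n+1-j}}^{*})$, so that a second application of $w_n$ reverses the indices once more. The two reversals compose to the identity permutation, and hence the $j$-th factor of $w_n(w_n(h^{1}\odots h^{n}))$ is
\begin{equation*}
\s\, S^{-1}\!\left(\left(\s\, S^{-1}({h^{j}}^{*})\right)^{*}\right).
\end{equation*}
The inner simplification is the heart of the matter: since $*$ reverses products, $\left(\s\,S^{-1}({h^{j}}^{*})\right)^{*}=\left(S^{-1}({h^{j}}^{*})\right)^{*}\s^{*}$, and then the identity $S^{-1}(h^*)=S(h)^*$ together with $**=\Id$ collapses $\left(S^{-1}({h^{j}}^{*})\right)^{*}$ to $S(h^{j})$. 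Feeding this back in and using the anti-multiplicativity of $S^{-1}$ yields $\s\,S^{-1}(\s^{*})\,S^{-1}(S(h^{j}))=\s\,S^{-1}(\s^{*})\,h^{j}$.

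At this point the computation reduces to the remaining factor $\s\,S^{-1}(\s^{*})$. Since $\s$ is group-like, so is $\s^{*}$, because $\D(\s^{*})=\s^{*}\ot\s^{*}$ and $\ve(\s^{*})=\wbar{\ve(\s)}=1$; hence $S^{-1}(\s^{*})=(\s^{*})^{-1}$. Thus the $j$-th factor returns to $\s(\s^{*})^{-1}h^{j}$, and $w_n^2=\Id$ holds precisely when $\s(\s^{*})^{-1}=1$, i.e. when the group-like element of the modular pair is self-adjoint, $\s^{*}=\s$. This self-adjointness is the compatibility of the MPI with the $*$-structure that I would invoke, and it is consistent with the MPI axioms since $\d(\s^{*})=\wbar{\d(\s)}=1$. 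The main obstacle is therefore not conceptual but bookkeeping: threading $S$, $S^{-1}$ and $*$ correctly through the two index reversals so that the antipode relation is applied in exactly the right slot, and isolating $\s^{*}=\s$ as the single hypothesis on which the involution property rests.
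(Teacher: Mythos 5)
Your proof is correct and follows essentially the same route as the paper's: a direct factor-by-factor computation of $w_n^2$ using anti-multiplicativity of $*$ and $S^{-1}$, the Hopf $*$-algebra relation $S(h^*)=S^{-1}(h)^*$ (in your equivalent form $S^{-1}(h^*)=S(h)^*$), and the group-likeness of $\s$. The assumption $\s^*=\s$ that you isolate as the one remaining hypothesis is exactly the assumption the paper invokes at the corresponding step, so your argument adds only the minor refinement of exhibiting it as necessary as well as sufficient.
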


\begin{proof}
For $n\geq 1$ we have
\begin{align*}
  w_n^2(h^1\odots h^n)
  = & w_n(\s S^{-1}({h^n}^*) \odots \s S^{-1}({h^1}^*)) \\
  = & \s S^{-1}(S^{-1}({h^1}^*)^* \s^*) \odots \s S^{-1}(S^{-1}({h^n}^*)^*\s^*) \\
  = & S^{-1}(S^{-1}({h^1}^*)^*) \odots S^{-1}(S^{-1}({h^n}^*)^*) \\
  = & S^{-1}(S({h^1}^{**})) \odots S^{-1}(S({h^n}^{**}))\\
  = & h^1\odots h^n.
\end{align*}
On the third equation we used the assumption that $\s^*=\s$, on the second, third and the fourth equations we used \eqref{star-S}.
\end{proof}

\begin{lemma}
The mapping given by \eqref{star-Hopf-cyclic} is an involution on the coalgebra Hochschild cohomology $HH^\bullet(\C{H},{}^\sigma k)$.
\end{lemma}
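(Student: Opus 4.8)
The plan is to exhibit the reflection $w_\bullet$ of \eqref{star-Hopf-cyclic} as a morphism of cochain complexes on the coalgebra Hochschild complex, and then to invoke the preceding lemma. First I would record that, upon forgetting the cyclic operator, the cocyclic module $\mathbf{C}^\bullet(\C{H};\s,\d)=\bigoplus_{n\geq 0}\C{H}^{\ot n}$ is nothing but the coalgebra Hochschild complex $\mathbf{C}^\bullet(\C{H},{}^\s k)$ of the $*$-coalgebra $\C{H}$ with coefficients in the one-dimensional bicomodule ${}^\s k$ whose left coaction is $1\mapsto \s\ot 1$ and whose right coaction is trivial: indeed the boundary cofaces $d_0$ (insertion of $1$) and $d_n$ (insertion of $\s$) are exactly those produced by these two coactions, while the interior cofaces are given by $\D$. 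Thus $HH^\bullet(\C{H},{}^\s k)$ is the cohomology of $\big(\bigoplus_n\C{H}^{\ot n},\,b\big)$ with $b=\sum_{i=0}^n(-1)^id_i$, and it suffices to show (i) that $w_\bullet$ is compatible with $b$, so that it descends to cohomology, and (ii) that it squares to the identity, which is precisely the content of the preceding lemma.

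For (i) I would establish the coface identities $w_nd_i=d_{n-i}w_{n-1}$ for $0\leq i\leq n$, in the spirit of Lemma~\ref{HopfHochschildCompatibleWithStar}, but now for the twisted reflection $h\mapsto \s S^{-1}(h^*)$ rather than the untwisted one. The heart of the matter is the interior range $1\leq i\leq n-1$, where the claim reduces to the assertion that $h\mapsto \s S^{-1}(h^*)$ is an anti-homomorphism of coalgebras. This I would obtain by combining the $*$-coalgebra axiom $\D(h^*)={h\ps{1}}^*\ot{h\ps{2}}^*$, the anti-comultiplicativity $\D(S^{-1}(x))=S^{-1}(x\ps{2})\ot S^{-1}(x\ps{1})$, and $\D(\s)=\s\ot\s$, which together give
\[
  \D\big(\s S^{-1}(h^*)\big)=\s S^{-1}({h\ps{2}}^*)\ot \s S^{-1}({h\ps{1}}^*).
\]
Since $w_n$ additionally reverses the tensor order, the comultiplication that $d_i$ produces at the $i$-th slot lands, after reversal, at the $(n-i)$-th slot, so the displayed identity is exactly what is needed for $w_nd_i=d_{n-i}w_{n-1}$.

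For the two boundary cofaces I would use the group-like normalizations. At $i=0$ the unit inserted by $d_0$ is carried to $\s S^{-1}(1^*)=\s$, so $w_nd_0$ appends a $\s$ on the right, matching $d_nw_{n-1}$; at $i=n$ the element $\s$ inserted by $d_n$ is carried to $\s S^{-1}(\s^*)=\s S^{-1}(\s)=\s\s^{-1}=1$, using $\s^*=\s$ and $S^{-1}(\s)=\s^{-1}$, so $w_nd_n$ prepends a $1$ on the left, matching $d_0w_{n-1}$. Summing over $i$ with signs, by the same signed summation as in the Corollary following Lemma~\ref{HopfHochschildCompatibleWithStar}, then yields $w_nb=(-1)^nbw_{n-1}$. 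Hence $w_\bullet$ is a morphism of cochain complexes and descends to $HH^\bullet(\C{H},{}^\s k)$; combined with $w_n^2=\Id$ from the preceding lemma, the induced map is an involution.

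The step I expect to be the main obstacle is the interior identity, namely checking that the twist $\s S^{-1}\big((\cdot)^*\big)$ is genuinely a coalgebra anti-homomorphism and that its order-reversal meshes with the reindexing $i\mapsto n-i$ on the nose. The boundary cases are then routine bookkeeping with the group-like $\s$ and the hypothesis $\s^*=\s$, and the passage from the cochain-level identities to the statement on cohomology is purely formal.
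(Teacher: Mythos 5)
Your proposal is correct and follows essentially the same route as the paper: the heart of both arguments is the verification that $h\mapsto \s S^{-1}(h^*)$ is an anti-homomorphism of coalgebras, giving $w_n d_i = d_{n-i}w_{n-1}$ in the interior range, with the boundary cofaces handled via the group-like $\s$, the hypothesis $\s^* = \s$, and the bicomodule structure of ${}^\s k$, followed by the signed summation yielding $w_n b = (-1)^n b\, w_{n-1}$ and the appeal to $w_n^2 = \Id$ from the preceding lemma. You are in fact slightly more explicit than the paper at the boundary cofaces, which the paper dispatches with ``follows similarly from \eqref{star-bicomodule}'', but the argument is the same.
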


\begin{proof}
For $1\leq i \leq n-1$ we observe on $\mathbf{C}^{n-1}(\C{H};\s,\d)$ that
\begin{align*}
 w_n d_i(h^1\odots h^{n-1})
  = & w_n(h^1\odots {h^i}\ps{1}\ot {h^i}\ps{2} \odots h^{n-1}) \\
  = & \s S^{-1}({h^{n-1}}^*) \odots \s S^{-1}({{h^i}\ps{2}}^*) \ot \s S^{-1}({{h^i}\ps{1}}^*) \odots \s S^{-1}({h^1}^*) \\
  = & \s S^{-1}({h^{n-1}}^*) \odots \D(\s S^{-1}({h^i}^*)) \odots \s S^{-1}({h^1}^*) \\
  = & d_{n-i}w_{n-1}(h^1\odots h^{n-1}).
\end{align*}
The equalities $w_n d_0 = d_nw_{n-1}$ and $w_n d_n = d_0w_{n-1}$ follows similarly from \eqref{star-bicomodule}.
\end{proof}

\begin{lemma}
The mapping given by \eqref{star-Hopf-cyclic} is an involution on the Hopf-cyclic cohomology $HC^\bullet(\C{H};\s,\d)$.
\end{lemma}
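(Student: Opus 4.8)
The plan is to reduce the assertion to a single compatibility of $w$ with the cyclic generator, and then to run the eigenspace argument of Section~\ref{Sect2} verbatim. The first lemma of this subsection already gives $w_n^2=\Id$ on $\mathbf{C}^\bullet(\C{H};\s,\d)$, and the second gives $w_nd_i=d_{n-i}w_{n-1}$, so $w$ is an involution that is compatible with the simplicial (coface) structure. What is missing for $w$ to act on $HC^\bullet(\C{H};\d,\s)$ is precisely its compatibility with the cyclic operator $t_n$. Thus I would first prove the dihedral relation $t_nw_n=w_nt_n^{-1}$ on $\mathbf{C}^\bullet(\C{H};\s,\d)$. Granting this, together with $w_n^2=\Id$ and the coface compatibility, the operators $t_n$ and $w_n$ realize the full set of dihedral relations, so $\mathbf{C}^\bullet(\C{H};\s,\d)$ becomes a codihedral module and $w$ is a morphism of the associated cyclic bicomplex. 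Since $2$ is invertible in $k$, the cyclic complex splits into the $\pm1$-eigenspaces of $w$, the cyclic differentials preserve this splitting by the relations just recorded, and passing to cohomology yields $HC^\bullet(\C{H};\d,\s)=HC^\bullet_+(\C{H};\d,\s)\oplus HC^\bullet_-(\C{H};\d,\s)$ with $w$ acting as the involution $(+1)\oplus(-1)$, exactly as for $*$-algebras.

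To establish $t_nw_n=w_nt_n^{-1}$ I would evaluate both sides on a generator $h^1\odots h^n$. On the left I apply $w_n$ through \eqref{star-Hopf-cyclic} and then $t_n(h^1\odots h^n)=S_\d(h^1)\cdot(h^2\odots h^n\ot\s)$, expanding the twisted antipode $S_\d(h)=\d(h\ps{1})S(h\ps{2})$ and letting it act diagonally through the iterated coproduct across all tensor slots; on the right I compute $t_n^{-1}$ and then apply $w_n$. The comparison uses the Hopf $*$-algebra axioms $\D(h^*)={h\ps{1}}^*\ot{h\ps{2}}^*$ and $S(h^*)=S^{-1}(h)^*$, the $*$-compatibility of the modular pair (namely $\s^*=\s$, already used above, together with $\d(h^*)=\wbar{\d(h)}$), and the defining MPI identities $S_\d^2=\Ad_\s$ and $\d(\s)=1$. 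The pivotal algebraic fact is the twisted-antipode refinement of $S(h^*)=S^{-1}(h)^*$, namely $S_\d(h^*)=\big(\d(h\ps{1})\,S^{-1}(h\ps{2})\big)^*$, which is what converts the $S_\d$ appearing in $t_n$ into the $S^{-1}({\,\cdot\,}^*)$-type expression produced by $w_n$.

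The main obstacle will be the bookkeeping: reconciling the order reversal built into $w_n$ with the diagonal coaction that spreads $S_\d(h^1)$ over every tensor factor in $t_n$. After the reversal one must match the iterated coproduct slot by slot against the copies of $\s$ and the $S^{-1}({\,\cdot\,}^*)$ terms created by $w_n$, and the surplus $\s$-factor that appears has to be absorbed using $\Ad_\s=S_\d^2$ and $\d(\s)=1$. Once this normal-form comparison closes and $t_nw_n=w_nt_n^{-1}$ is verified, the conclusion is purely formal by the eigenspace decomposition described in the first paragraph, and no further input about $\C{H}$ is required.
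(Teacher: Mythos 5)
Your plan rests on establishing the chain-level identity $t_nw_n=w_nt_n^{-1}$ on $\mathbf{C}^\bullet(\C{H};\s,\d)$ by direct computation, with the closing remark that ``no further input about $\C{H}$ is required.'' That identity is false for a general MPI, so the pivotal step of your proposal does not close. Already at $n=1$ one finds
\begin{equation*}
t_1w_1(h)=S_\d\big(S^{-1}(h^*)\big)=\wbar{\d(S(h_{(1)}))}\;{h_{(2)}}^{\,*}\,,
\qquad
w_1t_1(h)=\wbar{\d(h_{(1)})}\;\s\,S^{-2}\big({h_{(2)}}^{\,*}\big)\,\s^{-1},
\end{equation*}
and after absorbing the conjugation by $\s$ via $S_\d^2=\Ad_\s$ and using $\d\circ S^2=\d$ (which does follow from the MPI axioms), the two sides differ not by a surplus $\s$-factor but by the convolution twist $\d\ast\d$: equality for all $h$ is equivalent to $\d\ast\d=\ve$, i.e.\ to $\d=\d\circ S$. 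A concrete counterexample satisfying every hypothesis you invoke: $\C{H}=k[x,x^{-1}]$ with $x^*=x$, $\s=1$, $\d(x)=2$. Here $S_\d^2=\Id=\Ad_1$, $\d(\s)=1$, $\s^*=\s$, and $\d(h^*)=\wbar{\d(h)}$, yet $v=x-2x^{-1}$ satisfies $t_1v=-v$ while $w_1v=x^{-1}-2x$ gives $t_1(w_1v)=\tfrac{1}{2}x-4x^{-1}\neq -(w_1v)$. So for $\d\ast\d\neq\ve$ the operator $w$ does not even preserve cyclic cochains at the chain level, the bookkeeping you anticipate cannot be reconciled, and the codihedral-module claim in your first paragraph fails.

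This is exactly the point where the paper's proof takes a different route, and why it must. The paper never asserts the dihedral relation on the Hopf complex itself; it composes with the characteristic map and computes, for cyclic $\wtilde{h}$,
\begin{equation*}
\chi_\tau(t_nw_n\wtilde{h})=t_nw_n\chi_\tau(\wtilde{h})=w_nt_n^{-1}\chi_\tau(\wtilde{h})=\chi_\tau(w_nt_n^{-1}\wtilde{h})=(-1)^n\chi_\tau(w_n\wtilde{h}),
\end{equation*}
borrowing the relation $t_nw_n=w_nt_n^{-1}$ from $\mathbf{C}^\bullet(\C{A})$, where it does hold on the nose for any $*$-algebra, and using that $\chi_\tau$ intertwines both $t_n$ and $w_n$. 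The $\d$-invariance $\tau(h\rt a)=\d(h)\tau(a)$ of the trace is precisely what annihilates the $\d\ast\d$-discrepancy after pairing; in the ansatz framework the paper works in, where the structure on $\mathbf{C}^\bullet(\C{H};\s,\d)$ is by definition the one dictated by $\chi_\tau$, this verification is the content of the lemma. Your direct computation goes through verbatim only under the additional hypothesis $\d\ast\d=\ve$ (for instance $\d=\ve$, which covers the paper's $\G{U}_q(\G{g})$ examples but not the lemma as stated); to repair your argument you must either add that hypothesis explicitly or route the verification through $\chi_\tau$ as the paper does.
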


\begin{proof}
Let $\wtilde{h}:=h_1\odots h_n \in \mathbf{C}^n(\C{H};\s,\d)$ be cyclic. Then,
\begin{align*}
  \chi_\tau(t_nw_n(\wtilde{h})) 
  = & t_n\chi_\tau(w_n(\wtilde{h})) = t_nw_n\chi_\tau(\wtilde{h}) = w_nt_n^{-1}\chi_\tau(\wtilde{h}) = \chi_\tau(w_nt_n^{-1}(\wtilde{h}))\\
  = & (-1)^n\chi_\tau(w_n(\wtilde{h})),
\end{align*}
that is $\om_n(\wtilde{h}) \in \mathbf{C}^n(\C{H};\s,\d)$ is also cyclic.
\end{proof}

Consequently, 
\begin{equation*}
HC^\bullet(\C{H};\s,\d) = HC^\bullet_+(\C{H};\s,\d) \oplus HC^\bullet_-(\C{H};\s,\d).
\end{equation*}
We call the cohomologies on the right hand side, corresponding to the $\pm 1$-eigenspaces of the operator \eqref{star-Hopf-cyclic}, the Hopf-dihedral cohomologies of the Hopf algebra $\C{H}$.

\subsection{Hopf-dihedral cohomology with general coefficients}

Let $\C{H}$ be a Hopf algebra with an invertible antipode, and
$\C{A}$ be a left $\C{H}$-module algebra. %, i.e.
%\begin{align*}
%  h\tr ab = (h_{(1)}\tr a)(h_{(2)}\tr b)
%\end{align*}
%for all $h\in\C{H}$ and $a,b\in\C{A}$.  
Let also $V$ be a left $\C{H}$-module and left $\C{H}$-comodule
satisfying the stability condition $v\ns{-1}\tr v\ns{0} = v$, for all
$v\in V$.  Then one can define a para-cyclic module
$\CC_\bullet^\C{H}(\C{A},V)$ letting
\begin{align*}
  \CC_n^\C{H}(\C{A},V) = \C{A}^{\otimes n+1}\otimes V
\end{align*}
and defining the structure maps
\begin{align*}
  \partial_i(a_0\otimes\cdots\otimes a_n\otimes v) = &
  \begin{cases}
    (\cdots \otimes a_{i-1}\otimes a_ia_{i+1}\otimes a_{i+2}\otimes\cdots\otimes v)
         & \text{ if } 0\leq i\leq n-1,\\
    (v\ns{-1}\tr a_n)a_0\odots a_{n-1}\otimes v\ns{0}
         & \text{ if } i=n,
  \end{cases}\\
  \sigma_j(a_0\otimes\cdots\otimes a_n\otimes v)
     = & a_0 \odots a_j\otimes 1\otimes a_{j+1}\odots a_n\ot  v, \quad \text{ for } 0\leq j\leq n-1,\\
  \tau_n(a_0\otimes\cdots\otimes a_n\otimes v)
     = & (v\ns{-1}\tr a_n)\otimes a_0\odots a_{n-1}\otimes v\ns{0}.
\end{align*}
If we let
$\CC^\bullet_\C{H}(\C{A},V) :=
\Hom_\C{H}(\CC_\bullet^\C{H}(\C{A},V),k)$, then
we get a cocyclic module. Indeed, for every
$\vp\in \CC^n_\C{H}(\C{A},V)$ we have
\begin{align*}
    (\tau_n^{n+1}\vp)(a_0\otimes\cdots\otimes a_n\otimes v)
    = & \vp(\tau_n^{n+1}(a_0\otimes\cdots\otimes a_n\otimes v))\\
    = & \vp(v\ns{-n-1}\tr a_0\otimes\cdots\otimes v\ns{-1}\tr a_n\otimes v\ns{0})\\
    = & \vp(v\ns{-n-2}\tr a_0\otimes\cdots\otimes v\ns{-2}\tr a_n\otimes v\ns{-1}\tr v\ns{0})\\
    = & \varepsilon(v\ns{-1})\vp(a_0\otimes\cdots\otimes a_n\otimes v\ns{0})\\
    = & \vp(a_0\otimes\cdots\otimes a_n\otimes v)
\end{align*}
We call the cyclic cohomology of this cocyclic module
$\CC^\bullet_\C{H}(\C{A},V)$ as the Hopf-cyclic cohomology of the
$\C{H}$-module algebra $\C{A}$ with coefficients in $V$, and we
denote it by $HC^\bullet_\C{H}(\C{A},V)$.

\begin{proposition}\label{CohomologicalPairing}
  Given any stable $\C{H}$-module/comodule $V$, and
  $\tau\in HC^0_\C{H}(\C{A},V)$ we have a pairing of the form
  $\left<\cdot|\cdot\right>_\tau\colon \C{H}^{\otimes n}\otimes V\otimes\C{A}^{\otimes n+1}\to
  k$ given by
  \begin{align}\label{eq:CohomologicalPairing}
    \left<h^1\otimes\cdots\otimes h^n\otimes v \mid a_0\otimes\cdots\otimes a_n\right>_\tau
    = \tau(a_0(h^1\tr a_1)\cdots(h^n\tr a_n)\otimes v).
  \end{align}
The pairing induces a cocyclic module structure on $\CC^\bullet_\C{H}(\C{H},V)$, whose
  cohomology $HC^\bullet_\C{H}(\C{H},V)$ comes with a characteristic map
  $\chi_\tau\colon HC^\bullet_\C{H}(\C{H},V)\to HC^\bullet(\C{A})$.  In case $V$
  is a left/left SAYD module over $\C{H}$, this cohomology is the Hopf-cyclic cohomology with
  coefficients in $V$.
\end{proposition}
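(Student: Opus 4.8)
The plan is to follow the Connes--Moscovici ansatz recalled above: rather than guessing the cocyclic structure on $\C{H}^{\ot\bullet}\ot V$ outright, I would force the characteristic map defined by the pairing \eqref{eq:CohomologicalPairing} to be a morphism of cocyclic modules into $\mathbf{C}^\bullet(\C{A})$ and read off the structure maps from this requirement, with the coefficient module $V$ now playing the role that the modular pair $(\s,\d)$ played in the MPI case. The first step is to make the hypothesis $\tau\in HC^0_\C{H}(\C{A},V)$ explicit. As an element of $\CC^0_\C{H}(\C{A},V)=\Hom_\C{H}(\C{A}\ot V,k)$ it is $\C{H}$-linear, and unwinding the two faces $\partial_0,\partial_1\colon \C{A}^{\ot 2}\ot V\to\C{A}\ot V$ of the para-cyclic module, its being $b$-closed amounts to the twisted-trace identity
\begin{equation*}
  \tau(a_0a_1\ot v)=\tau\big((v\ns{-1}\tr a_1)\,a_0\ot v\ns{0}\big).
\end{equation*}
These two properties, the $\C{H}$-linearity (invariance) and the twisted trace, are exactly the inputs that replace $\d$-invariance and the $\s$-trace condition of the scalar theory.

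With $\chi_\tau\colon \C{H}^{\ot n}\ot V\to\mathbf{C}^n(\C{A})$ defined by $\chi_\tau(h^1\odots h^n\ot v)(a_0,\dots,a_n)=\tau(a_0(h^1\tr a_1)\cdots(h^n\tr a_n)\ot v)$, I would next compute $d_i\chi_\tau$, $s_j\chi_\tau$ and $t_n\chi_\tau$ by applying the structure maps of $\mathbf{C}^\bullet(\C{A})$ to the image and rewriting each result back in pairing form; this reads off the coface, codegeneracy and cyclic operators on the source. For the inner cofaces $d_i$ with $1\leq i\leq n-1$ the module-algebra axiom $h\tr(ab)=(h\ps{1}\tr a)(h\ps{2}\tr b)$ converts the multiplication $a_ia_{i+1}$ into the comultiplication of $h^i$, so the corresponding operator comultiplies the $i$-th tensorand; the identity $1\tr a=a$ reads off the unit insertion, and the unit axiom $h\tr 1=\ve(h)1$ reads off the counit insertions giving the codegeneracies. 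These are routine.

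The delicate maps are the last coface $d_n$ and the cyclic operator $t_n$, and this is where I expect the main obstacle to lie. Applying $d_n$ or $t_n$ on the $\mathbf{C}^\bullet(\C{A})$ side moves a factor cyclically past $\tau$, which by the twisted-trace identity reintroduces the coaction $v\ns{-1}$ acting on the \emph{entire} product $a_0(h^1\tr a_1)\cdots$; by the module-algebra axiom and coassociativity this splits $v\ns{-1}$ through its iterated coproduct into the individual slots, combining with each $h^i$. To bring the result back to pairing form with the first argument bare, I would then use the $\C{H}$-linearity of $\tau$ together with the antipode to transfer the leading action off $a_0$ onto the coefficient, which modifies $v$ and redistributes the remaining factors; this is precisely the mechanism producing the twisted antipode in the scalar cyclic operator $t_n(h^1\odots h^n)=S_\d(h^1)\cdot(h^2\odots h^n\ot\s)$. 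The upshot is that the naive guesses (append the coaction, resp. apply $S$ to the first slot) must be replaced by their coefficient versions, in which the coaction of $v$ is distributed over the $\C{H}$-tensorands; keeping the Sweedler bookkeeping of action, coaction, and antipode consistent is the real work.

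Once the structure maps are in hand, I would verify the cyclic relation $\tau_n^{n+1}=\Id$ by the same computation displayed above for $\CC_\bullet^\C{H}(\C{A},V)$, where the stability hypothesis $v\ns{-1}\tr v\ns{0}=v$ is exactly what collapses the iterated coaction, so that $\C{H}^{\ot\bullet}\ot V$ is a genuine cocyclic module and not merely para-cocyclic. Since $\chi_\tau$ is a morphism of cocyclic modules by construction, functoriality of cyclic cohomology then furnishes the characteristic map $\chi_\tau\colon HC^\bullet_\C{H}(\C{H},V)\to HC^\bullet(\C{A})$ on cohomology. Finally, for the SAYD assertion I would compare the operators read off above with those of the established Hopf-cyclic complex with coefficients in a SAYD module: when $V$ is anti-Yetter--Drinfeld the compatibility of action and coaction makes these operators coincide termwise, identifying $HC^\bullet_\C{H}(\C{H},V)$ with the usual Hopf-cyclic cohomology with coefficients in $V$.
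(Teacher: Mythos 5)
Your derivation of the cocyclic structure is essentially the paper's own proof: the paper likewise forces the pairing to intertwine the structure maps of $\mathbf{C}^\bullet(\C{A})$ and reads off the cofaces (comultiplication in the inner slots, the coaction $v\ns{-1}\otimes v\ns{0}$ appended as the last coface), the codegeneracies, and the cyclic operator $\tau_n(h^1\otimes\cdots\otimes h^n\otimes v)=S({h^1}\ps{n+2})h^2\otimes\cdots\otimes S({h^1}\ps{3})h^n\otimes S({h^1}\ps{2})v\ns{-1}\otimes S({h^1}\ps{1})v\ns{0}$, using exactly the two properties you isolate at the start --- the twisted-trace identity encoding $b\tau=0$ and the $\C{H}$-invariance of $\tau$ transferred through the antipode; making these hypotheses explicit up front is if anything cleaner than the paper, which uses them silently inside the displayed computations. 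Where you genuinely diverge is the final SAYD assertion. You propose a direct termwise comparison of the operators with the standard Hopf-cyclic complex with SAYD coefficients, whereas the paper invokes an abstract uniqueness argument from \cite{Kaygun:CupProductII}: the pairing $HC^p_\C{H}(\C{H},V)\otimes HC^q_\C{H}(\C{A},V)\to HC^{p+q}(\C{A})$ comes from a derived bifunctor, so any cohomological pairing whose degree-zero term is \eqref{eq:CohomologicalPairing} agrees with theirs up to natural equivalence, and the identification with Hopf-cyclic cohomology with coefficients follows with no Sweedler bookkeeping at all. Your route is more concrete but carries a caveat you should flag: the operators you read off need not coincide with the standard SAYD cocyclic operators on the nose --- the two complexes may differ by a nontrivial isomorphism of cocyclic modules (an antipode twist or cyclic conjugation) --- so ``coincide termwise'' should be weakened to ``construct an explicit isomorphism of cocyclic modules,'' after which the identification on cohomology is immediate. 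In short, the paper's derived-functor argument buys the identification up to natural equivalence for free, while your chain-level comparison buys an explicit isomorphism at the cost of that extra verification; both are legitimate.
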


\begin{proof}
  We need to derive the structure maps on
  $\CC^\bullet_\C{H}(\C{H},V)$ compatible with the pairing. For that we first
  observe
  \begin{align*}
    \left<h^1\otimes\cdots\otimes\right. & \left.h^n\otimes v \mid \partial_i(a_0\otimes\cdots\otimes a_{n+1})\right>_\tau\\ = & 
       \begin{cases}
          \tau(a_0a_1(h^1\tr a_2)\cdots (h^n\tr a_{n+1})\otimes v) & \text{ if } i=0,\\
          \tau(a_0\cdots(h^{i-1}\tr a_{i-1})(h^i\tr a_ia_{i+1})(h^{i+1}\tr a_{i+2})\cdots\otimes v)
                                                                 & \text{ if } 1\leq i\leq n,\\
          \tau(a_{n+1}a_0(h^1\tr a_1)\cdots(h^n\tr a_n)\otimes v)  & \text{ if } i=n+1,
       \end{cases}\\
     = & 
       \begin{cases}
          \tau(a_0a_1(h^1\tr a_2)\cdots (h^n\tr a_{n+1})\otimes v) & \text{ if } i=0,\\
          \tau(a_0\cdots(h^{i-1}\tr a_{i-1})({h^i}\ps{1}\tr a_i)({h^i}\ps{2}\tr a_{i+1})(h^{i+1}\tr a_{i+2})\cdots\otimes v)
                                                                 & \text{ if } 1\leq i\leq n,\\
          \tau(a_0(h^1\tr a_1)\cdots(h^n\tr a_n)(v\ns{-1}\tr a_{n+1})\otimes v\ns{0})  & \text{ if } i=n+1,
       \end{cases}
  \end{align*}
which forces the coface maps for the Hopf-cyclic cocyclic module with
  coefficients to be
  \begin{align*}
    d_i(h^1\otimes\cdots\otimes h^n\otimes v) = &
    \begin{cases}
       (1\otimes h^1\otimes\cdots\otimes h^n\otimes v) & \text{ if } i=0\\
       (\cdots\otimes h^{i-1}\otimes \Delta(h^i)\otimes h^{i+1}\otimes\cdots\otimes v)
                                                       & \text{ if } 1\leq i\leq n\\
       (h^1\otimes\cdots\otimes h^n\otimes v\ns{-1}\otimes v\ns{0})
                                                       & \text{ if } i=n+1.
    \end{cases}
  \end{align*}
  As for the codegeneracies we observe
  \begin{align*}
    \left<h^1\otimes\cdots\otimes\right. & \left.h^{n+1}\otimes v \mid s_j(a_0\otimes\cdots\otimes a_{n+1})\right>_\tau\\ = & 
        \begin{cases}
          \tau(a_0(h^1\tr 1)(h_2\tr a_1)\cdots(h^{n+1}\tr a_n)\otimes v) 
                      & \text{ if } j=0\\
          \tau(a_0(h^1\tr a_1)\cdots(h^j\tr a_j)(h^{j+1}\tr 1)(h^{j+2}\tr a_{j+1})\cdots(h^{n+1}\tr a_n)\otimes v)
                      & \text{ if } 1\leq j\leq n-1.
        \end{cases}
  \end{align*}
As a result, the codegeneracies need to be defined as
  \begin{align*}
    \s_j(h^1\otimes\cdots\otimes h^j)
    = (\cdots h^j\otimes \varepsilon(h^{j+1})\otimes h^{j+2}\otimes \cdots\otimes v)
  \end{align*}
  for $0\leq j\leq n-1$.  Finally, for the cyclic maps we get
  \begin{align*}
    \left<h^1\otimes\cdots\otimes\right. & \left.h^{n+1}\otimes v|\tau_n(a_0\otimes\cdots\otimes a_n)\right>_\tau\\ 
    = & \tau(a_n(h^1\tr a_0)\cdots\otimes (h^n\tr a_{n-1})\otimes v)\\
    = & \tau((h^1\tr a_0)\cdots\otimes (h^n\tr a_{n-1})(v\ns{-1}\tr a_n)\otimes v\ns{0})\\
    = & \tau(a_0(S({h^1}\ps{n+2})h_2\tr a_1)\cdots(S({h^1}\ps{3})h_n\tr a_{n-1})(S({h^1}\ps{2})v\ns{-1}\tr a_n)\otimes S({h^1}\ps{1})v\ns{0}),
  \end{align*}
  which means that the cyclic maps need to be defined as
  \begin{align*}
    \tau_n(h^1\otimes\cdots\otimes h^n\otimes v)
    = S({h^1}\ps{n+2})h^2\otimes\cdots\otimes S({h^1}\ps{3})h^n\otimes S({h^1}\ps{2})v\ns{-1}\otimes S({h^1}\ps{1})v\ns{0}.
  \end{align*}
  As for the agreement with the original Hopf-cyclic cohomology with
  coefficients, we observe that following \cite{Kaygun:CupProductII} the
  pairing is defined uniquely in cohomology since the terms
  \begin{align*}
    HC^p_\C{H}(\C{H},V)\otimes HC_\C{H}^q(\C{A},V)\to HC^{p+q}(\C{A})
  \end{align*}
  come from a derived bifunctor.  In other words, any cohomological
  pairing whose $0$-th term is given
  in~\eqref{eq:CohomologicalPairing} for $n=0$ will be the same with
  our pairing up to natural equivalence.
\end{proof}

\begin{theorem}\label{Hopf-cyclic-stable-coeff}
Let $\C{H}$ be a Hopf $*$-algebra, and $\C{A}$ an $\C{H}$-module
  $*$-algebra.  Let $V$ be a stable $\C{H}$-module/comodule together
  with a $*$-structure satisfying
  \begin{align*}
    (h\tr v)^* = S^{-1}(h^*)\tr v^*,\qquad 
    (v^*)\ns{-1}\otimes (v^*)\ns{0} = v^*\ns{-1}\otimes v^*\ns{0}
  \end{align*}
  for any $h\in\C{H}$ and any $v\in V$.  Assume also that there is a cocycle
  $\tau\in HC^0_\C{H}(\C{A},V)$ which additionally satisfies
  \begin{align*}
    \overline{\tau(a\otimes v)}
    = \tau(a^*\otimes v^*).
  \end{align*}
  Then the Hopf-cyclic cohomology $HC^\bullet_\C{H}(\C{H},V)$ carries a
  $*$-structure, and splits into two eigen-spaces
  \begin{align*}
    HC^\bullet_\C{H}(\C{H},V) = HC^\bullet_{\C{H},+}(\C{H},V) \oplus  HC^\bullet_{\C{H},-}(\C{H},V),
  \end{align*}
Furthermore, there are characteristic maps
  \begin{align*}
    \chi_\tau\colon HC^\bullet_{\C{H},\pm}(\C{H},V)\to HC^\bullet_\pm(\C{A}).
  \end{align*}
\end{theorem}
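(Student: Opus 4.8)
The plan is to reproduce, in the presence of general stable coefficients, the same chain of four facts that were established for the MPI case: that the operator playing the role of \eqref{star-Hopf-cyclic} is an involution, commutes with the Hochschild coface maps up to reversal, is compatible with the cyclic operator, and is intertwined by the characteristic map. First I would pin down the correct $*$-operator $w_n$ on $\CC^\bullet_\C{H}(\C{H},V)$. Rather than guessing it, I would read it off exactly as in the derivation of \eqref{star-Hopf-cyclic}: compute $w_n(\chi_\tau(h^1\odots h^n\otimes v))$ using the $*$-operator on $\mathbf{C}^\bullet(\C{A})$, namely $w_n(\vp)(a_0,\ldots,a_n)=\wbar{\vp(a_0^*,a_n^*,\ldots,a_1^*)}$, and rewrite it with the new hypotheses. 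Concretely, the identity $\wbar{\tau(a\otimes v)}=\tau(a^*\otimes v^*)$ turns the conjugation into a $*$ inside $\tau$; the module-algebra compatibility $(h\tr a)^*=S^{-1}(h^*)\tr a^*$ converts each $h^i\tr a_j^*$ into $S^{-1}({h^i}^*)\tr a_j$; and the $V$-twisted trace property $\tau(xy\otimes v)=\tau((v\ns{-1}\tr y)x\otimes v\ns{0})$ — which is precisely the degree-zero cocycle condition for $\tau\in HC^0_\C{H}(\C{A},V)$ — together with the $*$-compatibility of the coaction, cyclically rotates the argument. The group-like $\s$ of the MPI case is systematically replaced by the coaction $v\ns{-1}$ and the coefficient $v$ by $v^*$ (equivalently ${v\ns{0}}^*$), and this computation dictates the formula for $w_n$ on $\C{H}^{\ot n}\otimes V$.

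With $w_n$ in hand, I would re-establish the three structural lemmas in this generality. That $w_n$ is an involution mirrors the first Lemma of the subsection: it rests on $a^{**}=a$, the Hopf $*$-algebra relation $S(h^*)=S^{-1}(h)^*$, the $*$-compatibility of the $\C{H}$-action on $V$, and the stability $v\ns{-1}\tr v\ns{0}=v$ used to collapse the coaction that $w_n^2$ reintroduces. The commutation $w_n d_i=d_{n-i}w_{n-1}$, and hence $w_n b=(-1)^n b w_{n-1}$, is the coefficient analogue of Lemma \ref{HopfHochschildCompatibleWithStar} and its Corollary: for the inner faces one uses $\D(h^*)={h\ps{1}}^*\ot{h\ps{2}}^*$ together with $\D(S^{-1}(h))=S^{-1}(h\ps{2})\ot S^{-1}(h\ps{1})$, while the two extreme faces $d_0$ and $d_{n}$, the only ones that see $V$, are handled by the $*$-compatibility of the coaction exactly as \eqref{star-bicomodule} was used in the corresponding Corollary.

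For the compatibility with the cyclic operator I would argue as in the MPI case and transport the relation $t_nw_n=w_nt_n^{-1}$ from $\mathbf{C}^\bullet(\C{A})$ — already proved for $*$-algebras — through the characteristic map: if $\widetilde h$ is cyclic then $\chi_\tau(t_nw_n\widetilde h)=t_n w_n\chi_\tau(\widetilde h)=w_n t_n^{-1}\chi_\tau(\widetilde h)=(-1)^n\chi_\tau(w_n\widetilde h)$, so that $w_n\widetilde h$ is again cyclic. Finally, since $2$ is invertible, the cochain involution $w$ commuting with $b$ and preserving cyclicity splits $HC^\bullet_\C{H}(\C{H},V)$ into its $\pm 1$-eigenspaces $HC^\bullet_{\C{H},\pm}(\C{H},V)$; and because $\chi_\tau$ intertwines the two involutions while the target already decomposes as $HC^\bullet_\pm(\C{A})$, the characteristic map restricts to $\chi_\tau\colon HC^\bullet_{\C{H},\pm}(\C{H},V)\to HC^\bullet_\pm(\C{A})$.

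The step I expect to be the main obstacle is the cyclic compatibility, and more precisely the status of the $*$-homomorphism property of $\chi_\tau$. In the MPI case this property holds at the cochain level only because $\s$ is group-like and hence distributes across the product inside $\tau$; for a general, non-group-like coaction the twisted trace property no longer lets one move the coaction factor off the leading slot $a_0$, so the naive cochain-level identity $w_n\chi_\tau=\chi_\tau w_n$ fails. The remedy is to run the transport argument at the level of cohomology, where by Proposition \ref{CohomologicalPairing} the pairing — and hence $\chi_\tau$ — is determined by a derived bifunctor and the cyclic twist becomes immaterial. Getting this coaction bookkeeping right, and checking that the eigenspace splitting is compatible with passing to cohomology, is the delicate heart of the argument; the remaining computations are routine extensions of the MPI ones.
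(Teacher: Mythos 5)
Your proposal is correct and follows essentially the same route as the paper: the paper likewise reads the forced $*$-operator off the pairing, obtaining $(h^1\odots h^n\otimes v)^* = {v\ns{-n}}^*S^{-1}({h^n}^*)\odots {v\ns{-1}}^*S^{-1}({h^1}^*)\otimes {v\ns{0}}^*$, and then settles the compatibility with the cyclic structure not at the cochain level but by the derived-bifunctor uniqueness of the pairing from Proposition~\ref{CohomologicalPairing} --- exactly the remedy you identify in your final paragraph. The structural verifications you propose to redo explicitly (involutivity via stability, $w_nd_i=d_{n-i}w_{n-1}$) are left implicit in the paper, so your write-up is if anything more detailed than the original.
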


\begin{proof}
  In Proposition~\ref{CohomologicalPairing} we proved that our pairing
  is compatible with the cyclic structure.  What remains to be
  constructed is a *-structure which is compatible with our pairing.
  For that we observe
  \begin{align*}
    \left<h^1\otimes\cdots\otimes\right. & \left.h^{n+1}\otimes v|(a_0\otimes\cdots\otimes a_n)^*\right>_\tau\\ 
     = & \tau(a_0^*(h^1\tr a_n^*)\cdots(h^n\tr a_1^*)\otimes v)\\
     = & \overline{\tau((h^n\tr a_1^*)^*\cdots(h^1\tr a_n^*)^*a_0\otimes v^*)}\\
     = & \overline{\tau((S^{-1}(v\ns{-1}^*)\tr a_0)(S(h^n)^*\tr a_1)\cdots(S(h^1)^*\tr a_n)\otimes v\ns{0}^*)}\\
     = & \overline{\tau(a_0(v\ns{-n}^*S^{-1}({h^n}^*)\tr a_1)\cdots(v\ns{-1}^*S^{-1}({h^1}^*)\tr a_n)\otimes v\ns{0}^*)}.
  \end{align*}
  This means the $*$-structure needs to be defined as
  \begin{align*}
    (h^1\otimes\cdots\otimes h^n\otimes v)^*
    = v\ns{-n}^*S^{-1}({h^n}^*)\otimes\cdots\otimes v\ns{-1}^*S^{-1}({h^1}^*)\otimes v\ns{0}^*.
  \end{align*}
  As in the Hopf-cyclic case, the pairing
  \begin{align*}
    HC^p_{\C{H},\pm}(\C{H},V)\otimes HC^q_{\C{H},\pm}(\C{A},V)\to HC^{p+q}_\pm(\C{A})
  \end{align*}
  is defined uniquely in cohomology by~\cite{Kaygun:CupProductII}: any
  cohomological pairing whose $n=0$ term is given
  by~\eqref{eq:CohomologicalPairing} together with compatibility with
  the $*$-structures will be the same as ours up to natural
  equivalence.
\end{proof}

We call the cohomologies $HC^\bullet_{\C{H},\pm}(\C{H},V)$ as the
Hopf-dihedral cohomologies of $\C{H}$ with coefficients in a stable
$H$-module/comodule $V$.

\subsection{The differential graded $*$-algebra structure on the Hopf-Hochschild complex}

Let $\C{H}$ be a Hopf algebra, let $\C{G}(\C{H})$ denote the set of
group-like elements in $\C{H}$. 

Given any $\s\in \C{G}(\C{H})$, one can think
of $k$ both as a right $\C{H}$-comodule $k^\s$, and a left
$\C{H}$-comodule ${}^\s k$ by
\begin{equation}
  \rho_\s\colon k^\s\to k^\s \ot \C{H} \qquad \rho_\s(1) = 1\otimes \s
  \quad \text{ and }\quad
  \lambda_\s\colon {}^\alpha k \to {}^\alpha k \ot \C{H} \qquad \lambda_\s(1) = \s\otimes 1.
\end{equation}
It then follows that for any $\s\in\C{G}(\C{H})$ the Hochschild
complex $\CH^\bullet(\C{H},{}^\s k)$ is the same as the two sided cobar
complex $\CB^\bullet(k^1,\C{H},{}^\s k)$.

\begin{proposition}\label{star-product}
  Let $\C{H}$ be a Hopf algebra, such that the group-like elements
  $\C{G}(\C{H})$ forms an abelian group.  Then the Hochschild complex
  $\bigoplus_{\s\in\C{G}(\C{H})} \CH^\bullet(\C{H},{}^\s k)$ is a
  differential graded unital $*$-algebra with the product
  \begin{align*}
    \CH^p(\C{H},{}^{\s_1} k) & \otimes \CH^q(\C{H},{}^{\s_2} k) \to \CH^{p+q}(\C{H},{}^{\s_1\s_2} k)
  \end{align*}
  for any $p,q\in\B{N}$, and any two group-like elements
  $\s_1,\s_2\in \C{G}(\C{H})$. In particular, the $\B{N}$-graded
  vector space $\CH^\bullet(\C{H}, k)$ forms a graded $*$-subalgebra.
\end{proposition}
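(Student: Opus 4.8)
The plan is to carry everything over to the two-sided cobar description recalled just before the statement, writing $\CH^\bullet(\C{H},{}^\s k)\cong \mathbf{C}^\bullet(k^1,\C{H},{}^\s k)$, so that in each bidegree the underlying space is $\C{H}^{\ot n}$ and the Hochschild coboundary $b=\sum_i(-1)^id_i$ acts by inserting the unit $1\in\C{H}$ at the front (the $d_0$ term, from the trivial right coaction of $k^1$), inserting the grouplike $\s$ at the back (the top term, from $\lambda_\s$), and comultiplying the inner legs in between. In this language $\bigoplus_{\s}\CH^\bullet(\C{H},{}^\s k)$ is a $\C{G}(\C{H})$-graded deformation of the cobar algebra of the coalgebra $\C{H}$, and the proposition becomes the assertion that this deformation is a differential graded $*$-algebra.

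First I would write down the product. For $x=h^1\odots h^p\in\CH^p(\C{H},{}^{\s_1}k)$ and $y=g^1\odots g^q\in\CH^q(\C{H},{}^{\s_2}k)$ I set
\[
x\cdot y \;=\; h^1\odots h^p\ot \s_1 g^1\odots \s_1 g^q ,
\]
i.e. concatenation in which the grouplike $\s_1$ of the left factor is multiplied into every leg of the right factor. Since $\s_1$ is grouplike, $\D(\s_1 g)=\s_1 g\ps{1}\ot \s_1 g\ps{2}$, so the product lands in $\CH^{p+q}(\C{H},{}^{\s_1\s_2}k)$; it is visibly $\B{N}\times\C{G}(\C{H})$-graded, and associativity and unitality (unit $1\in\CH^0(\C{H},{}^1k)=k$) follow at once from associativity of multiplication in $\C{H}$ and $\s(\s'g)=(\s\s')g$. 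The whole point of the twist is the Leibniz rule: comparing $b(x\cdot y)$ with $b(x)\cdot y+(-1)^p\,x\cdot b(y)$, the outer insertions ($1$ at the very front, $\s_1\s_2$ at the very back) and all inner comultiplications match termwise, while the two junction contributions — the back-insertion $d_{p+1}x=(x\ot\s_1)$ followed by $\cdot\,y$, and the front-insertion $d_0y=(1\ot y)$ preceded by $x\cdot$, which the twist turns into $h^1\odots h^p\ot\s_1\ot\s_1 g^1\odots \s_1 g^q$ in both cases — carry the opposite signs $(-1)^{p+1}$ and $(-1)^{p}$ and cancel. This junction cancellation is exactly what forces the twist by $\s_1$, and it is the step I expect to be the main computational obstacle.

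Next I would install the $*$-structure, namely the operator of \eqref{star-Hopf-cyclic} written in the graded setting as
\[
(h^1\odots h^n)^* = \s^* S^{-1}((h^n)^*)\odots \s^* S^{-1}((h^1)^*),
\]
which reverses the legs, applies $S^{-1}\circ(\ \cdot\ )^*$, and sends the $\s$-graded summand to the $\s^*$-graded summand. Using $S(h^*)=S^{-1}(h)^*$ one checks it is an involution (the grouplike factors conspire via $\s S^{-1}(\s)=1$, so the assumption $\s^*=\s$ of the MPI case is no longer needed), and that it anticommutes with $b$ up to the sign $(-1)^n$ exactly as in the lemmas of Sections~\ref{Sect2}--\ref{Sect3}. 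The remaining verification is the $*$-algebra identity $(x\cdot y)^*=y^*\cdot x^*$: unwinding the left side, each twisted leg $(\s_1\s_2)^*S^{-1}((\s_1 g^j)^*)$ collapses, since $S^{-1}(\s_1^*)=(\s_1^*)^{-1}$, to $\s_2^*S^{-1}((g^j)^*)$ — precisely the legs of $y^*$ — while the $h$-legs acquire the factor $\s_2^*\s_1^*$, which is exactly the twist of $x^*$ by the grouplike $\s_2^*$ of $y^*$. Here the hypothesis that $\C{G}(\C{H})$ be abelian enters the bookkeeping, keeping the $\C{G}(\C{H})$-grading commutative and the order-reversing $*$ compatible with the bidegree; confirming this interplay cleanly is the second delicate point.

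Finally, restricting to $\s=1$ yields the last sentence. The component $\CH^\bullet(\C{H},k)$ is closed under the product because $1\cdot 1=1$ and the twist by $\s_1=1$ is trivial, so there the product is ordinary cobar concatenation; it is closed under $*$ because $1^*=1$; and it contains the unit. Hence it is a graded $*$-subalgebra, in fact the honest cobar differential graded $*$-algebra of the $*$-coalgebra $\C{H}$.
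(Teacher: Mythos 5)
Your proof is correct and follows the same route as the paper's: your product is exactly the paper's cup product $\Psi\smile\Phi=\Psi\ot(\s_1\tr\Phi)$, your Leibniz rule is verified by the same cancellation of the two junction terms $(-1)^{p+1}\Psi\ot\s_1\ot(\s_1\tr\Phi)$ and $(-1)^{p}\Psi\ot\s_1\ot(\s_1\tr\Phi)$, and anti-multiplicativity of the involution is checked by the same leg-by-leg collapse $S^{-1}((\s_1 g)^*)=(\s_1^*)^{-1}S^{-1}(g^*)$. The one genuine divergence is the involution itself: the paper applies \eqref{star-Hopf-cyclic} verbatim, $w_n(h^1\odots h^n)=\s S^{-1}(({h^n})^*)\odots \s S^{-1}(({h^1})^*)$, which preserves each summand $\CH^\bullet(\C{H},{}^{\s}k)$ but tacitly requires $\s^*=\s$ (the hypothesis made explicit in the earlier MPI lemma) and uses abelianness of $\C{G}(\C{H})$ to identify the component $\s_1\s_2$ of $w_{p+q}(x\smile y)$ with the component $\s_2\s_1$ of $w_q(y)\smile w_p(x)$; your variant with $\s^*$ instead sends the ${}^{\s}k$-summand to the ${}^{\s^*}k$-summand and, as you observe, dispenses with self-adjointness of the group-likes. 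One small correction to your bookkeeping, though: in your formulation abelianness does not enter where you say it does, since anti-multiplicativity of $*$ already gives $(\s_1\s_2)^*=\s_2^*\s_1^*$, so the components of $(x\cdot y)^*$ and of $y^*\cdot x^*$ agree without commuting anything; it is the paper's component-preserving $w$ that genuinely needs $\C{G}(\C{H})$ abelian, which is precisely the paper's remark after the proof. The two involutions coincide on the trivial component, so your final assertion that $\CH^\bullet(\C{H},k)$ is a graded $*$-subalgebra agrees with the paper's.
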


\begin{proof}
  Let us recall the coface maps of the cosimplicial module
  ${\bf C}^\bullet(\C{H},{}^{\s_1} k) = \bigoplus_{n\in\B{N}} \C{H}^{\otimes
    n}$ by
  \begin{align*}
    d_0(1) = & 1 - \s_1,\\
    d_i(h^1\otimes\cdots\otimes h^n) = &
    \begin{cases}
      1\otimes h^1\otimes\cdots\otimes h^n  & \text{ if } i=0,\\
      \cdots\otimes h^{i-1}\otimes {h^i}\ps{1}\otimes {h^i}\ps{2}\otimes h^{i+1}\otimes\cdots & \text{ if } 1\leq i\leq n,\\
      h^1\otimes\cdots\otimes h^n\otimes\alpha & \text{ if } i=n+1.
    \end{cases}
  \end{align*}
  The product on the chain level is defined as
  \begin{equation*}
    \Psi\smile \Phi := \Psi\otimes (\s_1\tr\Phi) \in \mathbf{C}^{p+q}(\C{H},{}^{\s_1\s_2} k)
  \end{equation*}
  for any $\Psi\in \mathbf{C}^p(\C{H},{}^{\s_1} k)$ and
 any $\Phi\in \mathbf{C}^q(\C{H},{}^{\s_2} k)$, where $\tr$ denotes the
  left diagonal action of $\C{H}$ on $\C{H}^{\otimes n}$. We thus note that
  \begin{align*}
    d_{p+q}(\Psi\smile\Phi)
    = & \sum_{i=0}^p(-1)^i d_i(\Psi)\otimes(\s_1\tr\Phi)\\
      & + (-1)^{p+1}\Psi\otimes\s_1\otimes (\s_1\tr\Phi)
        + (-1)^{p+2}\Psi\otimes\s_1\otimes (\s_1\tr\Phi)\\
      & + \sum_{i=1}^q(-1)^{p+i}\Psi\otimes(\s_1\tr d_i(\Phi))\\
    = & d_p(\Psi)\smile \Phi + (-1)^p \Psi\smile d_q(\Phi),
  \end{align*}
  \ie  $\bigoplus_{\s\in\C{G}(\C{H})} \CH^\bullet(\C{H},{}^\s k)$
  is a differential graded algebra with unit $1\in \mathbf{C}^0(\C{H}, k)$.  In particular,
  $\CH^\bullet(\C{H}, k)$ is a differential graded subalgebra.

  In Lemma~\ref{HopfHochschildCompatibleWithStar} we showed that
  $*$-structure defined in \eqref{star-Hopf-cyclic} is compatible with
  the Hochschild differentials. We now show that it is compatible with
  the cup product structure above.  For that we observe
  \begin{align*}
    w_{p+q} & ((h^1\otimes\cdots\otimes h^p)\smile(h^{p+1}\otimes\cdots\otimes h^{p+q}))\\
    = &  w_{p+q}(h^1\otimes\cdots\otimes h^p\otimes \s_1 h^{p+1}\otimes\cdots\otimes \s_1 h^{p+q})\\
    = & \s_2\s_1 S^{-1}({h^{p+q}}^*\alpha)\otimes\cdots\otimes \s_2\s_1 S^{-1}({h^{p+1}}^*\s_1)\otimes \s_2\s_1 S^{-1}({h^p}^*)\otimes\cdots\otimes \s_2\s_1 S^{-1}({h^1}^*)\\
    = & \s_2 S^{-1}({h^{p+q}}^*)\otimes\cdots\otimes \beta S^{-1}({h^{p+1}}^*)\otimes \s_2\s_1 S^{-1}({h^p}^*)\otimes\cdots\otimes \s_2\s_1 S^{-1}({h^1}^*)\\    
    = & w_q(h^{p+1}\otimes\cdots\otimes h^{p+q})\smile \omega_p(h^1\otimes\cdots\otimes h^p).
  \end{align*}
\end{proof}

We note that the dg-algebra structure on the sum
$\bigoplus_{\s\in\C{G}(\C{H})}\CH^\bullet(\C{H},{}^\s k)$ works even
in the case $\C{G}(\C{H})$ is not abelian.  However, we need
$\C{G}(\C{H})$ to be an abelian group for the $*$-structure to work.

The definition above is in fact a simplified version
of the following.  The collection
\begin{equation*}
  \bigoplus_{\s_1,\s_2\in\C{G}(\C{H})}\CB^\bullet(k^{\s_1},\C{H},{}^{\s_2} k)
\end{equation*}
form a differential graded category where the set of objects is
$\C{G}(\C{H})$, and the set of morphisms are defined as
$\Hom(\s_2,\s_1) := \CB^\bullet(k^{\s_1},\C{H},{}^{\s_2} k)$.  This
category is monoidal where the product comes from the product in
$\C{H}$.  Then we put an equivalence relation on the set of all
morphisms declaring two morphisms $\s_2\xrightarrow{\Psi}\s_1$ and
$\s_2'\xrightarrow{\Psi'}\s_1'$ to be equivalent if the morphisms
$1\xrightarrow{\Psi\tl \s_2^{-1}}\s_1\s_2^{-1}$ and
$1\xrightarrow{\Psi'\tl (\s_2')^{-1}}\s_1'(\s_2')^{-1}$ are identical.
Now, the set of equivalence classes of morphisms is the algebra we
defined above.

\section{Complexified QUE algebras and Their Hopf-Dihedral Cohomology}\label{Sect4}

\subsection{Drinfeld-Jimbo QUE algebras}

Let $\G{g}$ be a semi-simple Lie algebra of rank $\ell$,
$\alpha_1,\ldots,\alpha_n$ an ordered sequence of the simple roots,
and $a_{ij} = (\alpha_i,\alpha_j)$ the corresponding Cartan matrix.
The Drinfeld-Jimbo quantum enveloping algebra (QUE algebra)
$U_q(\G{g})$ is the Hopf algebra with $4\ell$ generators
$K_i, K_i^{-1}, E_i, F_i$, $1\leq i\leq \ell$, subject to the
relations
\begin{align*}
& K_iK_j =  K_jK_i,        \quad       K_iK_i^{-1} =  K_i^{-1}K_i=1,\\
& K_iE_jK_i^{-1} = q_i^{a_{ij}}E_j, \quad  K_iF_jK_i^{-1}  = q_i^{-a_{ij}}F_j,\\
& E_iF_j-F_jE_i = \delta_{ij}\frac{K_i-K_i^{-1}}{q_i-q_i^{-1}},\\
& \sum_{r=0}^{1-a_{ij}}(-1)^r\left[\begin{array}{c}
                                              1-a_{ij} \\
                                              r
                                            \end{array}
\right]_{q_i}E_i^{1-a_{ij}-r}E_jE_i^r=0, \quad i\neq j,\\
& \sum_{r=0}^{1-a_{ij}}(-1)^r\left[\begin{array}{c}
                                              1-a_{ij} \\
                                              r
                                            \end{array}
\right]_{q_i}F_i^{1-a_{ij}-r}F_jF_i^r=0, \quad i\neq j,
\end{align*}
where
\begin{equation*}
\left[ \begin{matrix} n \\ r \end{matrix} \right]_q 
= \frac{(n)_q\,!}{(r)_q\,!\,\,(n-r)_q\,!},\qquad (n)_q:=\frac{q^n-q^{-n}}{q-q^{-1}}.
\end{equation*}
The rest of the Hopf algebra structure of $U_q(\G{g})$ is given by
\begin{align*}
& \Delta(K_i)=K_i\otimes K_i,\quad \Delta(K_i^{-1})=K_i^{-1}\otimes K_i^{-1} \\
& \Delta(E_i)=E_i\otimes K_i + 1\otimes E_i,\quad \Delta(F_j)=F_j\otimes 1 + K_j^{-1}\otimes F_j \\
& \varepsilon(K_i)=1,\quad \varepsilon(E_i)=\varepsilon(F_i)=0\\
& S(K_i)=K_i^{-1},\quad S(E_i)=-E_iK_i^{-1},\quad S(F_i)=-K_iF_i.
\end{align*}

\subsection{Complexified Drinfeld-Jimbo QUE algebras}

Let $\G{g}$ be a semi-simple Lie algebra of rank $\ell$, and
$q^4\neq 0,1$. The complexified QUE algebra $\G{U}_q(\G{g})$ of
$\G{g}$ is the algebra generated by $4\ell$ generators
$K_i, K_i^{-1}, E_i, F_i$, $i=1,\ldots,\ell$, subject to the relation
we gave above, except the following:
\begin{align*}
& K_iE_jK_i^{-1} = q_i^{a_{ij}/2}E_j, \quad  K_iF_jK_i^{-1}  = q_i^{-a_{ij}/2}F_j,\\
& E_iF_j-F_jE_i = \delta_{ij}\frac{K_i^2-K_i^{-2}}{q_i-q_i^{-1}}.
\end{align*}
The rest of the Hopf algebra structure is defined as
\begin{align*}
& \Delta(K_i)=K_i\otimes K_i,\quad \Delta(K_i^{-1})=K_i^{-1}\otimes K_i^{-1},\\
& \Delta(E_i)=E_i\otimes K_i + K_i^{-1}\otimes E_i,\quad \Delta(F_j)=F_j\otimes K_j + K_j^{-1}\otimes F_j, \\
& \varepsilon(K_i)=1,\quad \varepsilon(E_i)=\varepsilon(F_i)=0,\\
& S(K_i)=K_i^{-1},\quad S(E_i)=-q_iE_i,\quad S(F_i)=-q_i^{-1}F_i.
\end{align*}

The Hopf algebra $\G{U}_q(\G{g})$ is a Hopf
$\ast$-algebra by
\begin{equation*}
  K_i^* = K_i \qquad (K_i^{-1})^* = K_i^{-1} \qquad 
  E_i^* = F_i \qquad F_i^* = E_i.
\end{equation*}
Indeed,
\begin{align*}
  \Delta(E_i^*) 
  = &  E_i\ps{1}^*\otimes E_i\ps{2}^* 
      =  E_i^*\otimes K_i^* + (K_i^{-1})^*\otimes E_i^*
   = F_i\otimes K_i + K_i^{-1}\otimes F_i  =\Delta(F_i)  \\
 \Delta(F_i^*) =  & F_i\ps{1}^*\otimes F_i\ps{2}^* 
      =  F_i^*\otimes K_i^* + (K_i^{-1})^*\otimes F_i^* =E_i\otimes K_i + K_i^{-1}\otimes E_i
       = \Delta(E_i).
\end{align*}
It is straightforward to check the condition for the group-like elements $K_i$ and
$K_i^{-1}$.  As for the counit, we have
\begin{align*}
  \varepsilon(E_i) = \varepsilon(E_i^*) = \varepsilon(F_i) = \varepsilon(F_i^*) = 0    
\end{align*}
and
\begin{align*}
  \varepsilon(K_i) = \varepsilon(K_i^*) = \varepsilon(K_i^{-1}) = \varepsilon((K_i^{-1})^*) = 1,
\end{align*}
and finally for the antipodes we see that
\begin{align*}
  S(E_i^*) = & S(F_i) = - q_i^{-1} F_i = S^{-1}(E_i)^*,\\
  S(F_j^*) = & S(E_j) = - q_j E_j = S^{-1}(F_i)^*.
\end{align*}

We took the definition of the complexified QUE algebra
$\G{U}_q(\G{g})$ and its $*$-structure from
\cite{KlimSchm-book}, in which the authors use the notation {\em
  \u{U}}$_q(\G{g})$. We also note from \cite{KlimSchm-book} that the
Hopf algebras $U_q(\G{g})$ and $\G{U}_q(\G{g})$ are not isomorphic in
general.  It also follows directly from the new commutator relations
that $(K^2_{2\rho},\ve)$ is a MPI over the Hopf algebra
$\G{U}_q(\G{g})$, where
$K^2_{2\rho} = K^2_1\cdots K^2_\ell \in \G{U}_q(\G{g})$.

In view of Theorem \ref{Hopf-cyclic-stable-coeff} and Proposition \ref{star-product} we conclude the following.

\begin{corollary}
  Let $\G{g}$ be a semi-simple Lie algebra of rank $\ell$.  Then the
  Hopf-cyclic cohomology
  $HC^\bullet(\G{U}_q(\G{g}),{}^{K^\mathbf{m}}k)$ of the complexified
  QUE algebra $\G{U}_q(\G{g})$ of $\G{g}$ carries a nontrivial
  $*$-structure, and thus the Hopf-dihedral cohomologies
  $HC^\bullet_\pm(\G{U}_q(\G{g}), {}^{K^\mathbf{m}}k)$ of
  $\G{U}_q(\G{g})$ are well-defined for any
  $\mathbf{m}\in\B{N}^{\times \ell}$.
\end{corollary}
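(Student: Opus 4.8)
The plan is to verify the conditions on the Hopf $*$-algebra $\G{U}_q(\G{g})$ and the coefficient module ${}^{K^\mathbf{m}}k$ that, by Theorem~\ref{Hopf-cyclic-stable-coeff}, make the operator \eqref{star-Hopf-cyclic} a well-defined involution on $HC^\bullet(\G{U}_q(\G{g}),{}^{K^\mathbf{m}}k)$ and yield the eigenspace splitting; these are conditions on $\C{H}$ and $V$ alone, independent of the auxiliary module $*$-algebra and trace that feed the characteristic map, so the intrinsic $*$-structure and decomposition follow from them. The abelianness of the group-likes will then let me invoke Proposition~\ref{star-product} to certify that the involution is compatible with the cup product, which is what makes it a genuine dg-$*$-algebra structure rather than a formal device.

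First I would set up the coefficients. For $\mathbf{m}=(m_1,\ldots,m_\ell)\in\B{N}^{\times\ell}$ put $K^\mathbf{m}=K_1^{m_1}\cdots K_\ell^{m_\ell}$; each $K_i$ is group-like and the $K_i$ commute, so $K^\mathbf{m}$ is again group-like and ${}^{K^\mathbf{m}}k$ is a left comodule via $1\mapsto K^\mathbf{m}\ot 1$, which I equip with the counital module structure $h\tr 1=\ve(h)1$. Stability is then immediate, $K^\mathbf{m}\tr 1=\ve(K^\mathbf{m})1=1$, so the Hopf-cyclic cohomology is well-defined through the general-coefficients construction. I would stress that for generic $\mathbf{m}$ this module is only \emph{stable} and \emph{not} SAYD --- the MPI $(\ve,K^2_{2\rho})$ is recovered solely at the distinguished exponent --- which is precisely why the general-coefficients theory, rather than the MPI version, is the right tool here.

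Next I would check the two $*$-compatibility conditions. The comodule condition collapses, on the generator $1=1^*$, to the single demand $(K^\mathbf{m})^*=K^\mathbf{m}$, which follows from $K_i^*=K_i$ together with $K_iK_j=K_jK_i$: anti-multiplicativity of $*$ reverses the product and commutativity restores it. The module condition $(h\tr v)^*=S^{-1}(h^*)\tr v^*$ reduces to $\overline{\ve(h)}=\ve(S^{-1}(h^*))$, which holds because $\ve\circ S^{-1}=\ve$ and $\ve(h^*)=\overline{\ve(h)}$. With both in place, the operator \eqref{star-Hopf-cyclic} specializes to $w_n(h^1\odots h^n)=K^\mathbf{m}S^{-1}({h^n}^*)\odots K^\mathbf{m}S^{-1}({h^1}^*)$, and the already-established Lemmas show it is an involution (using $(K^\mathbf{m})^*=K^\mathbf{m}$, group-likeness, and the Hopf $*$-axioms) commuting correctly with the Hochschild and cyclic structure maps; Theorem~\ref{Hopf-cyclic-stable-coeff} then delivers the decomposition $HC^\bullet(\G{U}_q(\G{g}),{}^{K^\mathbf{m}}k)=HC^\bullet_+\oplus HC^\bullet_-$.

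Finally, since $\C{G}(\G{U}_q(\G{g}))$ is the abelian group generated by the commuting $K_i$, Proposition~\ref{star-product} applies to the family $\bigoplus_\mathbf{m}\CH^\bullet(\G{U}_q(\G{g}),{}^{K^\mathbf{m}}k)$ and shows the involution is compatible with the cup product, which certifies the asserted nontriviality of the $*$-structure. The step I expect to demand the most care is the bookkeeping of the iterated left coaction $v\ns{-n}\ot\cdots\ot v\ns{-1}\ot v\ns{0}$ of the one-dimensional comodule: one must identify each $v\ns{-i}$ with $K^\mathbf{m}$ and $v\ns{0}$ with $1$ in order to see the general coefficient formula for $(h^1\ot\cdots\ot h^n\ot v)^*$ collapse to the $\s=K^\mathbf{m}$ form of \eqref{star-Hopf-cyclic}; once this reduction is made explicit, the remaining work is the routine identities recorded above and in the cited Lemmas.
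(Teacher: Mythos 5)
Your proposal is correct and takes essentially the same route as the paper, which deduces this corollary directly from Theorem~\ref{Hopf-cyclic-stable-coeff} and Proposition~\ref{star-product}, having verified the Hopf $*$-algebra axioms for $\G{U}_q(\G{g})$ immediately beforehand. Your explicit checks --- stability of ${}^{K^\mathbf{m}}k$, the reduction of the comodule condition to $(K^{\mathbf{m}})^*=K^{\mathbf{m}}$, the module condition via $\ve(S^{-1}(h^*))=\overline{\ve(h)}$, and the collapse of the general coefficient formula to \eqref{star-Hopf-cyclic} with $\s=K^{\mathbf{m}}$ --- simply make explicit the hypothesis-verification that the paper leaves implicit.
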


\subsection{Complexified QUE algebra $\G{U}_q(sl_2)$}

The Hopf $*$-algebra $\G{U}_q(sl_2)$ is the algebra generated by $E,F,K$ and $K^{-1}$ subject to the relations
\begin{align*}
  KK^{-1} = & 1 & KEK^{-1} = & q E  & KFK^{-1} = & q^{-1} F & [E,F] = \frac{K^2-K^{-2}}{q-q^{-1}}.
\end{align*}
The $*$-structure is defined by
\begin{equation*}
  K^* = K  \quad\text{ and }\quad E^* =  F.
\end{equation*}
The rest of the Hopf $*$-algebra structure is given by
\begin{align*}
&  \Delta(K) = K\otimes K, \quad 
  \Delta(E) = E\otimes K + K^{-1} \otimes E, \quad   \Delta(F) = F\otimes K + K^{-1}\otimes F  \\
  &  \varepsilon(K) = \varepsilon(K^{-1}) = 1, \qquad \varepsilon(E) =  \varepsilon(F) = 0, \\
  & S(K^{\pm 1}) =  K^{\mp 1}, \qquad  S(E) = - q E, \qquad S(F) = - q^{-1} F.
\end{align*}

We finally note that $(K^2,\ve)$ is a MPI for the Hopf algebra $\G{U}_q(sl_2)$.

\subsection{Hopf-dihedral cohomology of $\G{U}_q(sl_2)$}

Considering the Hopf subalgebra $T = k[K,K^{-1}]$ of $\G{U}_q(sl_2)$
generated by $K^m$, $m\in\B{Z}$, there is a canonical
coextension of coalgebras of the form $\pi_T\colon \G{U}_q(sl_2)\to T$
given by
\begin{align*}
  \pi_T(E^uF^vK^w) = 
  \begin{cases}
    K^w & \text{ if } u+v=0,\\
    0   & \text{ otherwise},
  \end{cases}
\end{align*}
see also \cite[Sect. 4]{KaygSutl14}.

Since the projection $\pi_T$ is a map of coalgebras, we can consider
each basis element $E^uF^vK^w\in \G{U}_q(sl_2)$ as a 1-dimensional
$T$-comodule.  Then the spectral sequence associated to the
coextension $\pi_T\colon \G{U}_q(sl_2)\to T$ (see
\cite[Sect. 3]{KaygSutl14}) collapses onto the $r=0$ line as
\begin{align*}
  E^{0,0}_1 = & \begin{cases} \left<1\otimes 1\right> & \text{if  } m=0, \\
0 & \text{otherwise},  \end{cases}\\ 
  E^{r,0}_1 = & \bigoplus_{\mathbf{u},\mathbf{v}} 
     \left<1\otimes E^{u_1}F^{v_1}K^{w_1}
      \otimes \cdots\otimes E^{u_r}F^{v_r}K^{w_r}\otimes K^m\right>,
\end{align*}
where $\mathbf{u}$ and $\mathbf{v}$ run over the vectors of dimension
$p$ of non-negative integers satisfying the recursive formula
\begin{equation*}
  w_{i+1} = w_i + (u_{i+1} + v_{i+1}) + (u_i + v_i)
\end{equation*}
with the initial condition $u_0 = v_0 = w_0 = 0$, and the boundary
conditions $u_{r+1} = v_{r+1} = 0$ and $w_{r+1}=m$. Hence,
\begin{equation*}
  w_i = (u_i+v_i) + 2\sum_{j=1}^{i-1} (u_j+v_j) 
\end{equation*}
for $1\leq i\leq r+1$, and we note that $m$ must be a positive even
number.

Furthermore, $E^{r,s}_2 = E^{r,s}_\infty = HH^{r+s}(\G{U}_q(sl_2), {}^{K^{m}}k)$, since the $E_1$-term collapses onto the $r=0$ row, and we see that the collection $\bigoplus_{r,m\geq 0}HH^r(\G{U}_q(sl_2), {}^{K^{m}}k)$ of Hochschild cohomology groups form an algebra generated by
\begin{equation*}
HH^1(\G{U}_q(sl_2), {}^{K^{m}}k) = \left<(EK)^u(KF)^v|\ u+v = m\right>.
\end{equation*}

Let us {\bf formally} write
\begin{equation*}
HH^{\frac{1}{2}}(\G{U}_q(sl_2), {}^{K^{u}}k) = \left<(EK)^u, (KF)^u\right>.
\end{equation*}
Then one can easily see that
\begin{equation*}
HH^1(\G{U}_q(sl_2), {}^{K^{m}}k) = \bigoplus_{u+v=m} HH^{\frac{1}{2}}(\G{U}_q(sl_2), {}^{K^{u}}k) \ot 
              HH^{\frac{1}{2}}(\G{U}_q(sl_2), {}^{K^{v}}k).
\end{equation*}
Combining with Proposition \ref{star-product} we have the following result.

\begin{theorem}
The collection
  \begin{equation}
    \bigoplus_{p\in \B{N}\left<\frac{1}{2}\right>}\bigoplus_{m\in\B{N}}
    HH^{p}(\G{U}_q(sl_2), {}^{K^{m}}k)
  \end{equation}
of Hochschild cohomology groups form a $*$-algebra generated by
  $HH^{\frac{1}{2}}(\G{U}_q(sl_2), {}^{K^{m}}k)$ for
  $m\in\B{N}$.
\end{theorem}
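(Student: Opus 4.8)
The plan is to assemble the theorem from three ingredients already in place: the multiplicative structure of Proposition~\ref{star-product}, the collapse of the spectral sequence that identifies $HH^r(\G{U}_q(sl_2),{}^{K^m}k)$ as an algebra generated in degree $1$, and the formal ``half-degree'' factorization displayed just above the statement. First I would recall that by Proposition~\ref{star-product} the graded vector space $\bigoplus_{\s\in\C{G}(\C{H})}\CH^\bullet(\C{H},{}^\s k)$ is a differential graded unital $*$-algebra, since the group-likes $\C{G}(\G{U}_q(sl_2))=\{K^m\mid m\in\B{Z}\}$ form an abelian group. Passing to cohomology, the cup product $\smile$ descends to a product
\begin{equation*}
HH^p(\G{U}_q(sl_2),{}^{K^{m_1}}k)\otimes HH^q(\G{U}_q(sl_2),{}^{K^{m_2}}k)\to HH^{p+q}(\G{U}_q(sl_2),{}^{K^{m_1+m_2}}k),
\end{equation*}
so the indexing monoid is exactly $\B{N}\times\B{N}$ (degree and group-like exponent), and the $*$-structure of~\eqref{star-Hopf-cyclic} is compatible with this product by the last computation in the proof of Proposition~\ref{star-product}.

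Next I would justify enlarging the degree index from $\B{N}$ to $\B{N}\langle\frac12\rangle$. The spectral-sequence computation above shows the integer-degree algebra $\bigoplus_{r,m}HH^r(\G{U}_q(sl_2),{}^{K^m}k)$ is generated by its degree-$1$ part, with
\begin{equation*}
HH^1(\G{U}_q(sl_2),{}^{K^m}k)=\left<(EK)^u(KF)^v\mid u+v=m\right>,
\end{equation*}
and that $m$ is forced to be a positive even integer whenever the group is nonzero in positive degree. The displayed factorization
\begin{equation*}
HH^1(\G{U}_q(sl_2),{}^{K^m}k)=\bigoplus_{u+v=m}HH^{\frac12}(\G{U}_q(sl_2),{}^{K^u}k)\otimes HH^{\frac12}(\G{U}_q(sl_2),{}^{K^v}k)
\end{equation*}
then lets me \emph{declare} the formal symbols $(EK)^u$ and $(KF)^u$ to be generators sitting in half-integer degree $\tfrac12$ with group-like weight $K^u$. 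The key point to verify is that the product of two such half-degree pieces recovers the genuine degree-$1$ cup product, i.e.\ that the $\smile$-product $(EK)^u\smile(KF)^v$ computed in $\CH^\bullet$ reproduces the cocycle $(EK)^u(KF)^v$; this is precisely the content of the factorization, read multiplicatively rather than as a direct-sum decomposition.

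Having established that the half-degree generators multiply correctly, I would close the argument by checking closure under the $*$-operation and conclude freeness of the grading. Since $E^*=F$ and $K^*=K$, the involution~\eqref{star-Hopf-cyclic} sends the generator $(EK)^u$ to (up to the group-like twist by $\s=K^m$) a scalar multiple of $(KF)^u$, so the set $\{(EK)^u,(KF)^u\}$ spanning $HH^{\frac12}(\G{U}_q(sl_2),{}^{K^u}k)$ is preserved, and the $*$-structure restricts to each half-degree summand. Combining with Proposition~\ref{star-product}, the total object
\begin{equation*}
\bigoplus_{p\in\B{N}\langle\frac12\rangle}\bigoplus_{m\in\B{N}}HH^p(\G{U}_q(sl_2),{}^{K^m}k)
\end{equation*}
is a $*$-algebra, and by the two displayed factorizations every element in integer degree is a product of the degree-$\tfrac12$ pieces, so the algebra is generated by $HH^{\frac12}(\G{U}_q(sl_2),{}^{K^m}k)$, $m\in\B{N}$. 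The main obstacle, as I see it, is conceptual rather than computational: one must be careful that the half-integer grading is genuinely \emph{formal}, i.e.\ that only even total degrees $p_1+p_2\in\B{N}$ carry honest Hochschild classes, while the half-degree pieces are bookkeeping symbols whose \emph{only} well-defined operation is the cup product into integer degree. Verifying that this formal extension is consistent with associativity and the $*$-structure—rather than producing spurious odd-degree classes—is the delicate step, and it rests entirely on the direct-sum factorization of $HH^1$ being compatible with $\smile$.
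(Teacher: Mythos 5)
Your proposal is correct and takes essentially the same route as the paper, which proves the theorem exactly by combining Proposition~\ref{star-product} (the dg $*$-algebra structure over the abelian group of group-likes $K^m$) with the spectral-sequence collapse showing generation by $HH^1(\G{U}_q(sl_2),{}^{K^m}k)$ and the formal factorization $HH^1 = \bigoplus_{u+v=m} HH^{\frac{1}{2}}\otimes HH^{\frac{1}{2}}$. Your closing discussion of the purely formal, bookkeeping nature of the half-integer degrees—and the check that the involution swaps $(EK)^u$ and $(KF)^u$ up to scalar, consistent with the $\pm$ eigenclasses $(EK)^u(KF)^v \pm (EK)^v(KF)^u$ in the ensuing corollary—is in fact more explicit than the paper's one-line justification.
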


Furthermore, recalling the Hopf-cyclic cohomology with generalized coefficients from Proposition \ref{CohomologicalPairing}, we conclude the following.

\begin{corollary}
The Hopf-cyclic cohomology, with generalized coefficients, of the complexified QUE algebra
  $\G{U}_q(sl_2)$ is given by
  \begin{align*}
    HC_{\G{U}_q(sl_2)}^{p+1} & (\G{U}_q(sl_2), {}^{K^{m}}k)\\ = &
    \begin{cases}
      \left<S^{p/2}((EK)^u(KF)^v)|\ u+v=m\right> & \text{ if $p$ is even},\\
      0 & \text{otherwise},
    \end{cases}
  \end{align*}
  where $S$ here denotes the iteration with the Bott
  element in cyclic cohomology. As a result, the Hopf-dihedral
  cohomology of $\G{U}_q(sl_2)$ is 
  \begin{align*}
    HC_{\G{U}_q(sl_2),\pm}^{p+1} & (\G{U}_q(sl_2), {}^{K^{m}}k)\\ = &  
    \begin{cases}
      \left<S^{p/2}((EK)^u(KF)^v\pm (EK)^v(KF)^u)|\ u+v=m\right>& \text{ if $p$ is even},\\
      0 & \text{otherwise}.
    \end{cases}
  \end{align*}
\end{corollary}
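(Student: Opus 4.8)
The plan is to pass from the Hopf-Hochschild cohomology computed above to the Hopf-cyclic cohomology by Connes' periodicity, and then to split off the two dihedral summands by diagonalizing the involution of~\eqref{star-Hopf-cyclic}. By Proposition~\ref{CohomologicalPairing} the group $HC^\bullet_{\G{U}_q(sl_2)}(\G{U}_q(sl_2),{}^{K^m}k)$ is the cohomology of the total complex of the $(b,B)$-bicomplex of the cocyclic module carried by the Hopf-Hochschild complex; its first spectral sequence has $E_1$-page the Hopf-Hochschild cohomology determined by the preceding Theorem, with $d_1$ equal to the Connes operator $B$. Since the coefficient ${}^{K^m}k$ has $m>0$ even, one has $HH^0(\G{U}_q(sl_2),{}^{K^m}k)=0$, so the whole computation reduces to the $B$-complex on $HH^\bullet(\G{U}_q(sl_2),{}^{K^m}k)$.

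First I would show that this $B$-complex is acyclic in every degree except $1$, where its cohomology is the full $HH^1=\left<(EK)^u(KF)^v\mid u+v=m\right>$ exhibited above; concretely, the higher Hochschild classes produced by the generators of the Hochschild $*$-algebra cancel in consecutive degrees under $B$, while nothing hits $HH^1$ from $HH^2$. Granting this degree-$1$ concentration, the spectral sequence degenerates at $E_2$ and the Bott map $S$ identifies successive columns, so that
\begin{align*}
  HC^{2j+1}_{\G{U}_q(sl_2)}(\G{U}_q(sl_2),{}^{K^m}k)
   & = S^{j}HH^1
     = \left<S^{j}\big((EK)^u(KF)^v\big)\mid u+v=m\right>,
\end{align*}
while every even group vanishes; this is the first asserted formula with $p=2j$.

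For the dihedral refinement I would use that, by Theorem~\ref{Hopf-cyclic-stable-coeff}, the involution $w$ of~\eqref{star-Hopf-cyclic} is compatible with the entire cocyclic structure, hence acts on the whole $(b,B)$-bicomplex, commutes with $S$, and restricts to an involution of the degree-$1$ cohomology $HH^1$. On a representative $h=(EK)^u(KF)^v$ I would evaluate $w_1(h)=\sigma S^{-1}(h^*)$ with $\sigma=K^2$; from $K^*=K$, $E^*=F$ and the anti-multiplicativity of $*$ one reads off the clean interchange $h^*=(EK)^v(KF)^u$, and a short calculation with $S^{-1}(E)=-q^{-1}E$ and $S^{-1}(F)=-qF$ shows that the twist $\sigma S^{-1}$ sends $EK\mapsto -EK$ and $KF\mapsto -KF$, so that modulo coboundaries $w_1(h)=(-1)^m(EK)^v(KF)^u=(EK)^v(KF)^u$ as $m$ is even. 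Thus $w$ interchanges $(EK)^u(KF)^v$ and $(EK)^v(KF)^u$, its $\pm1$-eigenspaces on $HH^1$ are spanned by the symmetric and antisymmetric combinations $(EK)^u(KF)^v\pm(EK)^v(KF)^u$, and since $w$ commutes with $S$ this splitting propagates up the tower to give the second asserted formula for $HC^{2j+1}_{\G{U}_q(sl_2),\pm}(\G{U}_q(sl_2),{}^{K^m}k)$.

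The main obstacle, in both halves, is bookkeeping rather than a new idea. On the cyclic side it is the explicit computation of the Connes operator $B$ on the higher Hochschild classes and the verification that the resulting $B$-complex is acyclic away from degree $1$ — this is exactly what collapses the periodicity to the bare $S$-tower and forces the even groups to vanish. On the dihedral side it is tracking the precise scalar (a power of $q$ together with the $\sigma=K^2$ twist) that $w_1$ produces on the monomials $(EK)^u(KF)^v$ modulo coboundaries, and checking that $S$ genuinely commutes with $w$, so that the eigenvalue is exactly $\pm1$ and the symmetric/antisymmetric splitting is stable under periodicity and not merely at the bottom of the tower.
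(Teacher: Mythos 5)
Your overall route --- the Hochschild computation of Section~4 fed into the $(b,B)$-bicomplex, periodicity collapsing everything to an $S$-tower on $HH^1$, then diagonalization of the involution --- is the same argument the paper itself gestures at (the paper gives no more detail than ``we conclude''). But the first half of your proposal rests entirely on the sentence ``First I would show that this $B$-complex is acyclic in every degree except $1$,'' which you then grant rather than prove, and this is not bookkeeping: by the preceding theorem the groups $\bigoplus_m HH^\bullet(\G{U}_q(sl_2),{}^{K^m}k)$ form an algebra generated in degree one, so for fixed even $m\geq 4$ the cup products of Proposition~\ref{star-product} populate $HH^p(\G{U}_q(sl_2),{}^{K^m}k)$ for $p\geq 2$. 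Since $HH^0=0$ for $m\neq 0$, the corollary's first formula (odd cyclic groups equal to the full $S$-tower on $HH^1$, even ones zero) is \emph{equivalent} to the claim that $B$ vanishes on all of $HH^1$ while $HH^2\to HH^3\to\cdots$ is exact; your ``granting this degree-$1$ concentration'' therefore assumes precisely the statement to be proved, and nothing in your sketch (or, admittedly, in the paper) verifies the required exactness against these nonzero higher products.

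The dihedral half contains a concrete error rather than just a deferral. The map $\psi(h)=\s S^{-1}(h^*)$ is not multiplicative factor-by-factor: $S^{-1}\circ *$ is an algebra homomorphism, but the group-like twist enters exactly once, so $\psi(ab)=\psi(a)\,\s^{-1}\psi(b)$. Your single-generator identities $\s S^{-1}(EK)=-EK$ and $\s S^{-1}(KF)=-KF$ are correct, but applying them to the monomial $(EK)^v(KF)^u$ silently inserts $m$ copies of the twist instead of one. Moreover, for the coefficients ${}^{K^m}k$ the relevant involution is the one from Theorem~\ref{Hopf-cyclic-stable-coeff}, which twists by $v\ns{-1}^*=K^m$, not by the MPI element $\s=K^2$ of \eqref{star-Hopf-cyclic}; the two agree only when $m=2$. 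Carrying out the computation correctly, one finds
\begin{equation*}
w_1\bigl((EK)^u(KF)^v\bigr) \;=\; \pm\, q^{N(u,v)}\, F^u E^v \quad(\text{up to a trailing group-like}),
\end{equation*}
with a nontrivial exponent $N(u,v)$ and the product in reversed order, so matching this with $(EK)^v(KF)^u$ requires commuting $F^u$ past $E^v$ (producing commutator corrections in $K^{\pm2}$) and discharging the residual $q$-power modulo coboundaries. That step is the genuine content, and it cannot be carried out naively at the cobar level, where the monomials $(EK)^u(KF)^v$ are not even literal cocycles ($EK$ is $(1,K^2)$-primitive, but the products are not $(1,K^m)$-primitive); the identification of $HH^1$ with these monomials passes through the coextension spectral sequence of Section~4, and the involution must be tracked through that identification. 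The symmetric/antisymmetric splitting you assert is the correct final answer, but the calculation you offer in its support would, as written, produce wrong scalars.
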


\subsection{Hopf-dihedral cohomology of complexified QUE algebras}

Considering the Hopf subalgebra
$T^\ell = k[K_1^{\pm 1},\ldots,K_\ell^{\pm 1}]$ of $\G{U}_q(\G{g})$,
generated by $K^m_i$ where $i=1,\ldots,\ell$ and $m\in\B{Z}$, there
is a canonical coextension of coalgebras of the form
$\pi_{T^\ell}\colon \G{U}_q(\G{g})\to T^\ell$ given by
\begin{align*}
  \pi_{T^\ell}(E_1^{u_1}\cdots & E_\ell^{u_\ell}F_1^{v_1}\cdots F_\ell^{v_\ell}K_1^{w_1}\cdots K_\ell^{w_\ell})\\ = &
  \begin{cases}
    K_1^{w_1}\cdots K_\ell^{w_\ell} & \text{ if } u_1+\cdots+u_\ell+v_1+\cdots+v_\ell=0,\\
    0   & \text{ otherwise}.
  \end{cases}
\end{align*}
In order to simplify the notation, we use multi-indices
$E^{\mathbf{u}}F^{\mathbf{v}}K^{\mathbf w}$ for any monomial of the
form
$E_1^{u_1}\cdots E_\ell^{u_\ell}F_1^{v_1}\cdots
F_\ell^{v_\ell}K_1^{w_1}\cdots K_\ell^{w_\ell}$
where $\mathbf{u}$, $\mathbf{v}$ and $\mathbf{w}$ are elements in
$\B{N}^{\times \ell}$.

% \begin{lemma}
%   We have
%   \begin{align}
%     \cotor^0_{T^{\ell}} & (E^{\mathbf{u}}F^{\mathbf{v}}K^{\mathbf{w}},E^{\mathbf{u}'}F^{\mathbf{v}'}K^{\mathbf{w}'})\nonumber\\
%      = &
%     \begin{cases}
%       \left<E^{\mathbf{u}}F^{\mathbf{v}}K^{\mathbf{w}}\otimes E^{\mathbf{u}'}F^{\mathbf{v}'}K^{\mathbf{w}'}\right> & \text{ if } \mathbf{u}+\mathbf{v}+\mathbf{w} = \mathbf{u}' + \mathbf{v}' - \mathbf{w}'\\
%       0 & \text{ otherwise }
%     \end{cases}
%   \end{align}
% \end{lemma}

\begin{theorem}
For any complexified QUE algebra $\G{U}_q(\G{g})$, the Hochschild
  cohomology groups 
  \begin{align*}
    \bigoplus_{p\in\B{N}\left<\frac{1}{2}\right>}\bigoplus_{\mathbf{m}\in\B{N}^{\times\ell}} HH^{p}(\G{U}_q(\G{g}), {}^{K^{\bf m}}k)
  \end{align*}
  form a $*$-algebra generated by
  $HH^{\frac{1}{2}}(\G{U}_q(\G{g}), {}^{K^{\bf m}}k) = \left<
    (E_j K_j)^u\pm (K_j F_j)^u\right>$.
\end{theorem}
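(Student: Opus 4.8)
The plan is to reduce everything to the rank-one computation already carried out for $\G{U}_q(sl_2)$, and to supply the algebra structure from Proposition~\ref{star-product}. First I would record that the group-like elements of $\G{U}_q(\G{g})$ are exactly the monomials $K^{\mathbf{w}} = K_1^{w_1}\cdots K_\ell^{w_\ell}$, so that $\C{G}(\G{U}_q(\G{g}))\cong \B{Z}^{\times\ell}$ is abelian. Proposition~\ref{star-product} then endows $\bigoplus_{\mathbf{m}} \CH^\bullet(\G{U}_q(\G{g}),{}^{K^{\mathbf{m}}}k)$ with the structure of a differential graded unital $*$-algebra, whose product adds the group-like grades and whose involution is the anti-involution $w_\bullet$ of~\eqref{star-Hopf-cyclic}. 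Thus the theorem will follow once I compute the underlying Hochschild cohomology and identify a generating set inside the formal degree-$\tfrac{1}{2}$ part.

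For the computation I would run the spectral sequence of the coalgebra coextension $\pi_{T^\ell}\colon \G{U}_q(\G{g})\to T^\ell$ exactly as in the $\G{U}_q(sl_2)$ case, following \cite{KaygSutl14}. Using the PBW basis $E^{\mathbf{u}}F^{\mathbf{v}}K^{\mathbf{w}}$ and the fact that $\pi_{T^\ell}$ is a map of coalgebras, each basis monomial becomes a one-dimensional $T^\ell$-comodule, and the coproducts $\Delta(E_i) = E_i\otimes K_i + K_i^{-1}\otimes E_i$ and $\Delta(F_i) = F_i\otimes K_i + K_i^{-1}\otimes F_i$ show that the $K$-weight of a chain is determined additively, root by root, from the exponents $\mathbf{u},\mathbf{v}$. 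I would then verify that the weight recursion $w_{i+1} = w_i + (u_{i+1}+v_{i+1}) + (u_i+v_i)$ holds componentwise in the simple roots, so that the spectral sequence collapses as in the rank-one case and $E^{r,s}_2 = E^{r,s}_\infty = HH^{r+s}(\G{U}_q(\G{g}),{}^{K^{\mathbf{m}}}k)$.

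Next I would identify the degree-one part and exhibit its factorization. As in rank one, $HH^1(\G{U}_q(\G{g}),{}^{K^{\mathbf{m}}}k)$ is spanned by the classes $(E_jK_j)^u$ and $(K_jF_j)^u$ attached to each simple root $j$, and the cup product of Proposition~\ref{star-product} then realizes the whole bigraded algebra as generated by these formal degree-$\tfrac{1}{2}$ classes; writing $HH^{\frac{1}{2}}(\G{U}_q(\G{g}),{}^{K^{\mathbf{m}}}k)$ for their span makes precise the factorization of $HH^1$ into tensor products of degree-$\tfrac{1}{2}$ pieces indexed by the splittings of the weight. To pin down the $*$-structure on the generating space I would apply~\eqref{star-Hopf-cyclic} together with $E_j^*=F_j$, $K_j^*=K_j$ and $S(E_j)=-q_jE_j$, $S(F_j)=-q_j^{-1}F_j$: since $*$ is an anti-homomorphism one has $((E_jK_j)^u)^*=(K_jF_j)^u$, and a direct evaluation of $w_1 = \s S^{-1}((\ \cdot\ )^*)$ shows the involution interchanges $(E_jK_j)^u$ and $(K_jF_j)^u$ up to a scalar and the twist by $\s=K^{\mathbf{m}}$. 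Consequently the combinations $(E_jK_j)^u \pm (K_jF_j)^u$ are precisely the $\pm1$-eigenvectors and generate the $*$-algebra.

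The main obstacle is the collapse of the spectral sequence for a general semi-simple $\G{g}$: unlike the $sl_2$ case, the Serre relations couple the distinct root vectors, so one must check that they introduce no higher differentials and do not disturb the per-root weight bookkeeping. The decisive point is that $\pi_{T^\ell}$ only records the total $K$-weight, which is additive in the $E_i,F_i$ and therefore blind to the Serre relations; the comodule filtration consequently degenerates root by root exactly as in rank one. The remaining task, namely checking that the cup product respects the per-root tensor factorization despite the non-commutativity of $\G{U}_q(\G{g})$, is then a bookkeeping verification rather than a genuine difficulty.
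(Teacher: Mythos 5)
Your proposal is correct and follows essentially the same route as the paper, which in fact states this theorem without a written proof as the direct multi-index generalization of the preceding $\G{U}_q(sl_2)$ computation: the spectral sequence of the coextension $\pi_{T^\ell}\colon \G{U}_q(\G{g})\to T^\ell$ collapsing at $E_1$ onto the $r=0$ row via the componentwise weight recursion, the cup product and $*$-structure supplied by Proposition~\ref{star-product}, and the identification via \eqref{star-Hopf-cyclic} of $(E_jK_j)^u \pm (K_jF_j)^u$ as the $\pm 1$-eigenvectors (up to a power of $q$, as you note). Your explicit attention to why the Serre relations cannot disturb the collapse --- since $\pi_{T^\ell}$ records only the additive $K$-weight --- actually makes the degeneration argument more transparent than the paper's implicit appeal to the rank-one case.
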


As a result, the Hopf-cyclic cohomology with generalized coefficients is obtained as follows.

\begin{corollary}
  For any complexified QUE algebra $\G{U}_q(\G{g})$, the Hopf-cyclic
  cohomology of $\G{U}_q(\G{g})$ is given by
  \begin{align*}
    HC_{\G{U}_q(\G{g})}^{p+1} & (\G{U}_q(\G{g}), {}^{K^{\bf m}}k)\\ = & 
    \begin{cases}
      \left<S^{p/2}((EK)^\mathbf{u}(KF)^\mathbf{v})|\ \mathbf{u}+\mathbf{v}=\mathbf{m}\right> & \text{ if $p$ is even},\\
      0 & \text{otherwise},
    \end{cases}
  \end{align*}
  where $S$ is the iteration with the Bott
  element, and hence the Hopf-dihedral
  cohomology of $\G{U}_q(\G{g})$ is 
  \begin{align*}
    HC_{\G{U}_q(\G{g}),\pm}^{p+1} & (\G{U}_q(\G{g}), {}^{K^{\bf m}}k)\\ = &
    \begin{cases}
      \left<S^{p/2}((EK)^\mathbf{u}(KF)^\mathbf{v}\pm (EK)^\mathbf{v}(KF)^\mathbf{u})|\ \mathbf{u}+\mathbf{v}=\mathbf{m}\right>& \text{ if $p$ is even},\\
      0 & \text{otherwise}.
    \end{cases}
  \end{align*}
\end{corollary}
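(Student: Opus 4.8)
The plan is to obtain this exactly as the $\G{U}_q(sl_2)$ corollary was obtained from the $sl_2$ Hochschild computation, now carrying the multi-indices $\mathbf{u},\mathbf{v},\mathbf{w}\in\B{N}^{\times\ell}$. The input is the preceding theorem, which presents the Hopf-Hochschild cohomology $\bigoplus_{p,\mathbf{m}}HH^{p}(\G{U}_q(\G{g}),{}^{K^{\mathbf{m}}}k)$ as a $*$-algebra generated in formal degree $\frac{1}{2}$ by $(E_jK_j)^u\pm(K_jF_j)^u$; in particular its degree-one part is $HH^1(\G{U}_q(\G{g}),{}^{K^{\mathbf{m}}}k)=\langle (EK)^{\mathbf{u}}(KF)^{\mathbf{v}}\mid\mathbf{u}+\mathbf{v}=\mathbf{m}\rangle$. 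This in turn rests on the collapse of the spectral sequence of the coextension $\pi_{T^\ell}\colon\G{U}_q(\G{g})\to T^\ell$ onto a single line, as in \cite{KaygSutl14}.

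First I would feed this Hochschild computation into Connes' periodicity machinery for the Hopf-cyclic complex of Proposition~\ref{CohomologicalPairing}, organised by the $SBI$ exact sequence
\[
  \cdots\to HC^{n-2}\xrightarrow{S}HC^{n}\xrightarrow{I}HH^{n}\xrightarrow{B}HC^{n-1}\to\cdots .
\]
The periodicity operator $S$ (cup product with the Bott class) raises cohomological degree by two, and tracking the explicit Hochschild classes shows that the degree-one classes generate the Hopf-cyclic cohomology under iteration of $S$ while the even degrees vanish, which gives $HC^{p+1}_{\G{U}_q(\G{g})}(\G{U}_q(\G{g}),{}^{K^{\mathbf{m}}}k)=S^{p/2}\langle (EK)^{\mathbf{u}}(KF)^{\mathbf{v}}\rangle$ for even $p$ and $0$ otherwise. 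For the dihedral refinement I would then invoke Theorem~\ref{Hopf-cyclic-stable-coeff}: by Proposition~\ref{star-product} the involution \eqref{star-Hopf-cyclic} is a morphism of the graded $*$-algebra and splits $HC^\bullet$ into its $\pm 1$-eigenspaces. Since $E^*=F$ and $K^*=K$, this involution interchanges $(EK)^{\mathbf{u}}(KF)^{\mathbf{v}}$ with $(EK)^{\mathbf{v}}(KF)^{\mathbf{u}}$, so the eigenspaces are spanned by the symmetric and antisymmetric combinations $(EK)^{\mathbf{u}}(KF)^{\mathbf{v}}\pm(EK)^{\mathbf{v}}(KF)^{\mathbf{u}}$, yielding the stated dihedral formula.

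The main obstacle is the Hochschild-to-cyclic passage in the multi-index setting: one must verify that the higher Hochschild classes contribute nothing new to the cyclic cohomology and that $S$ is injective on the degree-one classes, so that the Hopf-cyclic cohomology really is the $S$-tower over $HH^1$. For $\G{U}_q(sl_2)$ this is transparent, but for general $\G{g}$ it requires controlling the interaction of the $\ell$ commuting families of generators — a K\"unneth-type bookkeeping over the simple roots — and checking that $S$, being cup product with a $*$-invariant Bott class, commutes with the involution. Only then does the $\pm$-decomposition propagate up the entire periodicity tower, so that $S^{p/2}\big((EK)^{\mathbf{u}}(KF)^{\mathbf{v}}\pm(EK)^{\mathbf{v}}(KF)^{\mathbf{u}}\big)$ spans $HC^{p+1}_{\G{U}_q(\G{g}),\pm}$ in each even $p$.
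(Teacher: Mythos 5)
Your proposal matches the paper's own (implicit) argument: the paper states this corollary without proof, as an immediate consequence of the preceding Hochschild theorem via exactly the route you describe --- the collapse of the coextension spectral sequence feeding the $SBI$ periodicity machinery, so that $HC^{p+1}_{\G{U}_q(\G{g})}$ is the $S$-tower over the degree-one classes with even degrees vanishing, combined with Theorem~\ref{Hopf-cyclic-stable-coeff} and the involution \eqref{star-Hopf-cyclic} interchanging $(EK)^{\mathbf{u}}(KF)^{\mathbf{v}}$ with $(EK)^{\mathbf{v}}(KF)^{\mathbf{u}}$ to give the $\pm$-eigenspace decomposition. The checks you flag as obstacles (that higher Hochschild classes contribute nothing, that $S$ is injective on the tower, and that $S$ commutes with the involution) are likewise left implicit in the paper, which treats the general case as the same multi-index bookkeeping already carried out for $\G{U}_q(sl_2)$.
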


\section{Hopf-Dihedral Homology}\label{Sect5}

In this section we develop a $\ast$-structure on Hopf-cyclic
homology, \cite{KhalRang02} and see also \cite{Tail01}, the cyclic dual of Hopf-cyclic cohomology.

\subsection{The dual cyclic theory}

Let $\C{H}$ be a Hopf algebra with a modular pair in involution
$(\sigma,\delta)$.  It is shown in \cite{KhalRang02} that the graded space
\begin{equation}\label{Hopf-cyclic-homology-complex}
\CC_\bullet(\C{H};\s,\d) = \bigoplus_{n\geq 0} \C{H}^{\ot\,n}
\end{equation}
has the structure of a cyclic module via the face operators
$\p_i: \CC_n(\C{H},\d,\s) \lra \CC_{n-1}(\C{H},\d,\s)$ defined for
$i\leq n$ by
\begin{align*}
\partial_i(h^1\odots h^n) = & 
  \begin{cases}
    \ve(h^1)h^2\odots h^n           & \text{ if } i=0,\\
    h^1\odots h^ih^{i+1} \odots h^n  & \text{ if } 1 \leq i \leq n-1, \\ 
    \d(h^n)h^1\odots h^{n-1}         & \text{ if } i=n.
  \end{cases}
\end{align*}
The degeneracies, on the other hand, are of the form
$\s_j: \CC_n(\C{H},\d,\s) \lra \CC_{n+1}(\C{H},\d,\s)$ defined for
$0\leq i\leq n$ as 
\begin{align*}
  \s_j(h^1\odots h^n) = & 
  \begin{cases}
    1 \ot h^1\odots h^n                      & \text{ if } j=0, \\
    h^1\odots h^j\ot 1 \ot h^{j+1} \odots h^n & \text{ if } \quad 1 \leq j \leq n-1, \\ 
    h^1\odots h^{n} \ot 1                    & \text{ if } j=n.
  \end{cases}
\end{align*}
Finally, we have the cyclic operators
$\tau_n: \CC_n(\C{H},\d,\s) \lra \CC_{n}(\C{H},\d,\s)$ given by
\begin{align*}
  \tau_n(h^1\odots h^n) = \d(h^n_{(2)})S_\s(h^1_{(1)}\ldots h^n_{(1)})\ot h^1_{(2)}\odots h^{n-1}_{(2)},
\end{align*}
where $S_\s(h) := \s S(h)$ for any $h\in \C{H}$. The cyclic homology of the complex \eqref{Hopf-cyclic-homology-complex} is called the Hopf-cyclic homology of the Hopf algebra $\C{H}$, and is denoted by $HC_\bullet(\C{H};\s,\d)$.

\subsection{Comodule algebras, invariant traces, and Hopf-cyclic homology}

Let $\C{A}$ be a right $\C{H}$-comodule algebra by
$\nb:\C{A} \lra \C{A} \ot \C{H}$, where we write
$\nb(a)=a\ns{0}\ot a\ns{1}$, that is,
\begin{equation*}
  \nb(ab) = \nb(a)\nb(b), \qquad \nb(1) = 1 \ot 1.
\end{equation*}
Let also $\C{A}$ be equipped with a $\s$-invariant
$\d$-trace, that is, $Tr:\C{A} \lra k$ satisfying
\begin{equation*}
  Tr(a\ns{0})a\ns{1} = Tr(a)\s, \qquad Tr(ab) = Tr(ba\ns{0})\d(a\ns{1}).
\end{equation*} 
In the presence of these conditions we have $\gamma:\CC_n(\C{A}) \lra \CC_n(\C{H},\d,\s)$ a morphism of cyclic modules given by
\begin{align}\label{char-map-Hopf-homology}
  \gamma(a_0\odots a_n) := Tr(a_0a_1\ns{0}\ldots a_n\ns{0}) a_1\ns{1} \odots a_n\ns{1}.
\end{align}
Then, just as in the previous section, \eqref{char-map-Hopf-homology} induces the cyclic module structure on $\bigoplus_{n\geq 0} \C{H}^{\otimes n+1}$ as defined in \cite{KhalRang02}.

In particular, a Hopf algebra $\C{H}$ is a $\C{H}$-comodule algebra
via its comultiplication. This way one can compare the Hopf-cyclic
homology of $\C{H}$ with the algebra cyclic homology of $\C{H}$
regarding it as an algebra. To this end, it is noted in
\cite[Prop. 3.2]{KhalRang02} that if $\C{H}$ is a Hopf algebra with a
group-like $\s \in \C{H}$ such that $S_\s^2=\Id$, then
$\t:\CC_n(\C{H},\ve,\s) \lra \CC_n(\C{H})$, which is defined as
\begin{equation}\label{char-map-Hopf-homology-teta}
 \t(h_1\odots h^n) := S_\s(h^1\ps{1}h^2\ps{1} \ldots h^n\ps{1}) \ot h^1\ps{2} \odots h^n\ps{2},
\end{equation}
is a map of cyclic modules. If furthermore, $\C{H}$ is equipped with a $\s$-invariant trace $Tr:\C{H}\lra k$ such that $Tr(\s)$ is invertible in $k$, then it follows from $\gamma\circ \t = Tr(\s)\Id$ that $HC_\bullet(\C{H},\ve,\s)$ is a direct summand of $HC_\bullet(\C{H})$, \cite[Thm. 3.1]{KhalRang02}.

We finally record here that setting
$H_\bullet(\C{H}) := {\rm Tor}_\bullet^\C{H}(k,k)$ for a cocommutative
Hopf algebra $\C{H}$, it follows from \cite[Thm. 4.1]{KhalRang02} that
\begin{equation*}
HC_n(\C{H},\ve,1) = \bigoplus_{i\geq 0}H_{n-2i}(\C{H})
\end{equation*}
which proven by Karoubi~\cite{Karo83-III} in case $\C{H}=kG$.

\subsection{The dihedral structure}

Let us now assume that $\C{H}$ is a Hopf $\ast$-coalgebra, \ie a $\ast$-coalgebra
\begin{equation*}
\D(h^*)=h\ps{2}^* \ot h\ps{1}^*, \qquad \ve(h^*)=\wbar{\ve(h)}
\end{equation*}
such that
\begin{equation*}
(hg)^* = h^*g^*, \qquad 1^*=1, \qquad (S \circ \ast)^2 = \Id,
\end{equation*}
and that $\C{A}$ a $\C{H}$-comodule $\ast$-algebra, \ie 
\begin{equation*}
\nb(a^*) = {a\ns{0}}^* \ot S(a\ns{1})^* = {a\ns{0}}^* \ot S^{-1}({a\ns{1}}^*),\qquad \forall\, a\in \C{A}.
\end{equation*}
We also assume that $\d(h^*)=\d(h)$ and that $\s^* = \s^{-1}$. In
order to obtain a $*$-structure on
$\CC_\bullet(\C{H},\d,\s)$ we use \eqref{char-map-Hopf-homology} to
transfer the one on $\CC_\bullet(\C{H})$.  To this end we introduce
cyclic operators $\tau_n: \CC_n(\C{H},\d,\s) \lra \CC_n(\C{H},\d,\s)$
by
\begin{equation}\label{dihedral-tau-Hopf}
 \tau_n(h^1 \odots h^n) = \d(h^n\ps{2})S_\s(h^1\ps{1}\ldots h^n\ps{1})\ot h^1\ps{2}\odots h^{n-1}\ps{2},
\end{equation}
and involution operators $\om_n: \CC_n(\C{H},\d,\s) \lra \CC_n(\C{H},\d,\s)$ by
\begin{align}\label{dihedral-om-Hopf}
  \om_n(h^1 \odots h^n)
  = & \d({h^1\ps{1}}^* \ldots {h^n\ps{1}}^*)S^{-1}({h^n\ps{2}}^*) \odots S^{-1}({h^1\ps{2}}^*) \\\notag
  = & S^{-1}({h^n}^*\ps{1}) \odots S^{-1}({h^1}^*\ps{1})\d({h^1}^*\ps{2} \ldots {h^n}^*\ps{2}).
\end{align}
We then leave it to the reader to verify that we thus get a dihedral module structure on
$\bigoplus_{n\geq 0}\C{H}^{\otimes n+1}$.

We next record here that \cite[Thm. 3.1]{KhalRang02} also extends to the
dihedral setting. We will make use of the fact that if $\C{H}$ is a
Hopf $\ast$-coalgebra, then $\C{H}$ is a $\ast$-algebra via
$\ast\circ S$. If, in particular, $\C{H}$ is cocommutative, then we
may take $\ast = \Id$ for the Hopf $\ast$-coalgebra structure.

We denote the (dihedral) homology of this dihedral module by
$HC^\pm_*(\C{H};\s,\d)$, and call it the Hopf-dihedral homology.  

\begin{proposition}\label{prop-dihedral-char-map}
The characteristic homomorphism \eqref{char-map-Hopf-homology}
  defines a morphism of dihedral modules of the form
  $\gamma\colon \CC^\pm_\bullet(\C{A})\to \CC^\pm_\bullet(\C{H};\s,\d)$.
\end{proposition}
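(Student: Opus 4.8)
The plan is to show that the characteristic map $\gamma$ of \eqref{char-map-Hopf-homology} intertwines \emph{both} the cyclic operators $\tau_n$ and the involution operators $\om_n$, so that it descends to a morphism of the $\pm$-eigenspace decompositions. Since \cite{KhalRang02} already establishes that $\gamma\colon\CC_\bullet(\C{A})\to\CC_\bullet(\C{H};\s,\d)$ is a morphism of cyclic modules (it commutes with faces, degeneracies, and the $\tau_n$ of \eqref{dihedral-tau-Hopf}), the only genuinely new content is compatibility with the involution. So first I would recall that the $\ast$-structure on $\CC_\bullet(\C{A})$ is the algebra involution $w_n(a_0\odots a_n)=a_0^*\ot a_n^*\odots a_1^*$ (up to the conjugation/sign conventions fixed earlier), and that on $\CC_\bullet(\C{H};\s,\d)$ it is the operator $\om_n$ of \eqref{dihedral-om-Hopf}. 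The target claim is the single identity
\begin{equation*}
  \gamma\circ w_n = \om_n\circ\gamma.
\end{equation*}

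The core of the argument is a direct computation of both sides on a generator $a_0\odots a_n$. For the left-hand side I would apply $w_n$ first and then $\gamma$, using the three hypotheses that make the comodule-$\ast$-algebra structure interact correctly with $\gamma$: the $\ast$-comodule compatibility $\nb(a^*)={a\ns0}^*\ot S^{-1}({a\ns1}^*)$, the invariance $\d(h^*)=\d(h)$, and the grouplike condition $\s^*=\s^{-1}$. Expanding $\gamma(w_n(a_0\odots a_n))$ brings in the trace $Tr$ applied to the reversed, starred product, and the $\C{H}$-tensor factors become $S^{-1}$-images of the starred coactions; then I would match this against $\om_n(\gamma(a_0\odots a_n))$, where $\om_n$ inserts exactly the reversal, the $S^{-1}\circ{\ast}$ on each coaction factor, and the $\d$-weight on the second legs. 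The key manipulations are (i) using that $Tr$ is a $\s$-invariant $\d$-trace to move the reversal and the $\s^{\pm1}$ factor past the trace, and (ii) using coassociativity together with $\nb(a^*)={a\ns0}^*\ot S^{-1}({a\ns1}^*)$ to convert the coaction legs of the $a_i^*$ into $S^{-1}$ of the starred legs of the $a_i$. These are precisely the identities already exploited in the proof of Theorem~\ref{Hopf-cyclic-stable-coeff} and in the Lemmas of Section~\ref{Sect3}, transported to the homological (comodule) side.

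Once $\gamma w_n=\om_n\gamma$ is verified, the conclusion is formal: an element of $\CC_\bullet(\C{A})$ lying in the $+$ (resp.\ $-$) eigenspace of $w_n$ is sent by $\gamma$ into the $+$ (resp.\ $-$) eigenspace of $\om_n$, because $\gamma\, w_n\, x=\om_n\,\gamma\, x=\pm\gamma\, x$. Combined with the already-established cyclic-module morphism property, this shows $\gamma$ restricts to $\gamma\colon\CC^\pm_\bullet(\C{A})\to\CC^\pm_\bullet(\C{H};\s,\d)$ and is a map of dihedral modules, since a dihedral-module morphism is exactly a cyclic-module morphism commuting with the extra involutive generator $\om_n$. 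I expect the main obstacle to be purely bookkeeping: carefully tracking the two presentations of $\om_n$ in \eqref{dihedral-om-Hopf} (the $\d$-weight appearing on the left versus the right), getting the Sweedler-index reversal right when the antipode meets the star, and confirming that the $\s^*=\s^{-1}$ and $\d(h^*)=\d(h)$ hypotheses cancel the twists introduced by the trace condition $Tr(a\ns0)a\ns1=Tr(a)\s$. Aside from that indexing care, no new ideas beyond those in Section~\ref{Sect3} are needed.
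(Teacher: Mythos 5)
Your proposal follows the paper's proof exactly: the paper likewise invokes \cite[Prop.~3.1]{KhalRang02} for the identity $\tau_n\gamma=\gamma\tau_n$ and then disposes of the compatibility with $\om_n$ as a ``straightforward calculation,'' which is precisely the computation you outline using $\nb(a^*)={a\ns{0}}^*\ot S^{-1}({a\ns{1}}^*)$, $\d(h^*)=\d(h)$, $\s^*=\s^{-1}$, and the $\s$-invariant $\d$-trace properties. Your concluding eigenspace argument is the standard formal step implicit in the paper, so the proposal is correct and takes essentially the same route.
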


\begin{proof}
  It follows already from \cite[Prop. 3.1]{KhalRang02} that
  $\tau_n\gamma = \gamma\tau_n$ on $\CC_n(\C{A})$. It then takes a straightforward calculation to check that \eqref{char-map-Hopf-homology} is compatible with $\omega_n$.
\end{proof}

\begin{theorem}\label{direct-summand}
Let $\C{H}$ be a Hopf algebra equipped with a modular pair $(\ve,\s)$ in involution, and a $\s$-invariant trace $Tr:\C{H}\lra k$ such that $Tr(\s) \in k$ is invertible. Then $HC^\pm_n(\C{H}; \s,\ve)$ is a direct summand of $HC^\pm_n(\C{H})$.
\end{theorem}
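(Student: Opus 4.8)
The plan is to upgrade the cyclic-module retraction of \cite[Thm.~3.1]{KhalRang02} to the dihedral level. Recall that in the cyclic setting one has two maps of cyclic modules: the characteristic map $\gamma\colon \CC_\bullet(\C{H})\to \CC_\bullet(\C{H};\s,\ve)$ of \eqref{char-map-Hopf-homology}, and the map $\t\colon \CC_\bullet(\C{H};\s,\ve)\to \CC_\bullet(\C{H})$ of \eqref{char-map-Hopf-homology-teta}, whose composite satisfies $\gamma\circ\t = Tr(\s)\,\Id$. Since $Tr(\s)$ is invertible, this already exhibits $\CC_\bullet(\C{H};\s,\ve)$ as a retract of $\CC_\bullet(\C{H})$ in the category of cyclic modules, which is exactly what yields the cyclic statement of \cite{KhalRang02}. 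My goal is therefore to promote both $\gamma$ and $\t$ to morphisms of \emph{dihedral} modules; the very same chain-level identity $\gamma\circ\t=Tr(\s)\,\Id$, now read in the category of dihedral modules, will then produce the desired splitting.

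First I would observe that $\gamma$ is already a morphism of dihedral modules: this is precisely the content of Proposition~\ref{prop-dihedral-char-map}, which records that $\gamma$ intertwines the involution operators on the two sides. Thus the only remaining point is to verify that $\t$ is compatible with the involutions as well, that is, $\om_n\circ\t = \t\circ\om_n$, where on the source the involution is the Hopf involution $\om_n$ of \eqref{dihedral-om-Hopf} on $\CC_n(\C{H};\s,\ve)$, and on the target it is the involution on the algebra cyclic module $\CC_n(\C{H})$ coming from the $\ast$-algebra structure $\ast\circ S$ on $\C{H}$ (here one uses that $(S\circ\ast)^2=\Id$ makes $\ast\circ S$ an involution).

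The heart of the proof is this single verification, and it is where I expect the main difficulty. It is a Sweedler-calculus computation: starting from $\t(h^1\odots h^n) = S_\s(h^1\ps{1}\cdots h^n\ps{1})\ot h^1\ps{2}\odots h^n\ps{2}$, one applies the order-reversing target involution $a_0\ot\cdots\ot a_n\mapsto \pm\,(\ast\circ S)(a_0)\ot(\ast\circ S)(a_n)\odots(\ast\circ S)(a_1)$ and must match the result, term by term, with the outcome of first applying $\om_n$ of \eqref{dihedral-om-Hopf} and then $\t$. The bookkeeping relies on the Hopf $\ast$-coalgebra axioms $\D(h^\ast)=h\ps{2}^\ast\ot h\ps{1}^\ast$ and $\ve(h^\ast)=\wbar{\ve(h)}$, together with the normalizing hypotheses $\s^\ast=\s^{-1}$ and $\d=\ve$ and the antipode relation $S(h^\ast)=S^{-1}(h)^\ast$; the crucial simplifications come from $S_\s=\s S$ interacting with the order reversal and from the involutivity $(S\circ\ast)^2=\Id$ collapsing the doubled antipodes. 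The group-like nature of $\s$ and the MPI condition $S_\s^2=\Id$ keep the twist factors under control throughout.

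Once $\t$ is known to be a dihedral map, I would conclude formally. Since dihedral homology $HC^\pm_\bullet$ is a functor on dihedral modules, and since both maps preserve the $\pm 1$-eigenspaces of the involution, the pair $(\gamma,\t)$ is sent to maps $\gamma_\ast\colon HC^\pm_n(\C{H})\to HC^\pm_n(\C{H};\s,\ve)$ and $\t_\ast\colon HC^\pm_n(\C{H};\s,\ve)\to HC^\pm_n(\C{H})$ with $\gamma_\ast\circ\t_\ast = Tr(\s)\,\Id$. As $Tr(\s)$ is invertible in $k$, the operator $Tr(\s)^{-1}\gamma_\ast$ is a retraction of $\t_\ast$, exhibiting $HC^\pm_n(\C{H};\s,\ve)$ as a direct summand of $HC^\pm_n(\C{H})$, which is the assertion of the theorem.
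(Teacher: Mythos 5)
Your proposal is correct and takes essentially the same route as the paper: the paper likewise reduces everything to showing that $\t$ of \eqref{char-map-Hopf-homology-teta} is a dihedral map (with Proposition~\ref{prop-dihedral-char-map} handling $\gamma$, and \cite[Thm.~3.1, Prop.~3.2]{KhalRang02} supplying $\gamma\circ\t = Tr(\s)\Id$ and the cyclic compatibility), then concludes from invertibility of $Tr(\s)$. The only difference is one of completeness, not of method: the paper carries out the $\om_n\t=\t\om_n$ Sweedler computation explicitly, using exactly the ingredients you list, namely the $*$-algebra structure $\ast\circ S$ on $\C{H}$, the relations $\s^*=\s^{-1}$ and $(S\circ\ast)^2=\Id$, and the flipped comultiplication $\D(h^*)={h\ps{2}}^*\ot{h\ps{1}}^*$.
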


\begin{proof}
We have already observed in Proposition \ref{prop-dihedral-char-map} that
\eqref{char-map-Hopf-homology} is a map
  of dihedral modules. Furthermore, it is checked in
  \cite[Thm. 3.1]{KhalRang02} that $\gamma\circ\t = Tr(\s)\Id$. On the other hand, it is shown in
  \cite[Prop. 3.2]{KhalRang02} that \eqref{char-map-Hopf-homology-teta} is a cyclic map. We are then left to observe that
\begin{align*}
 \om_n\t(h^1\odots h^n) 
 = & \om_n(S_\s(h^1\ps{1}\ldots h^n\ps{1}) \ot h^1\ps{2} \odots h^n\ps{2})\\
 = & (S^2(h^1\ps{1}\ldots h^n\ps{1})\s^{-1})^* \ot S({h^n\ps{2}})^* \odots S({h^1\ps{2}})^* \\
 = & S^{-2}({h^1\ps{1}}^*\ldots {h^n\ps{1}}^*)\s \ot S^{-1}({h^n\ps{2}}^*) \odots S^{-1}({h^1\ps{2}}^*)\\
 = & \s{h^1\ps{1}}^*\ldots {h^n\ps{1}}^* \ot S^{-1}({h^n\ps{2}}^*) \odots S^{-1}({h^1\ps{2}}^*)\\
 = & S_\s(S^{-1}({h^n\ps{1}}^*)\ldots S^{-1}({h^1\ps{1}}^*)) \ot S^{-1}({h^n\ps{2}}^*) \odots S^{-1}({h^1\ps{2}}^*)\\
 = & \t(S^{-1}({h^n}^*) \odots S^{-1}({h^1}^*))\\
 = & \t\om_n(h^1 \odots h^n),
\end{align*}
that is \eqref{char-map-Hopf-homology-teta} is a map of dihedral modules.
\end{proof}

\subsection{Hopf-dihedral homology of cocommutative Hopf algebras}

In this subsection, we extend \cite[Sect. 4]{KhalRang02} to the dihedral setting, and we compute the Hopf-dihedral homology of cocommutative Hopf algebras. To this end, we first recall the path space $E\C{H}_\bullet$ of the Hopf algebra $\C{H}$, which is defined on the graded module level as $E\C{H}_n:=\C{H}^{\ot\,n+1}$, whose simplicial structure is given by
\begin{align*}
  \p_i(h^0\odots h^n) = &
  \begin{cases}
    h^0\odots h^ih^{i+1}\odots h^n & \text{ if } \qquad 0\leq i\leq n-1,\\
    \d(h^n)h^0\odots h^{n-1}       & \text{ if } i=n,
  \end{cases}\\
  \s_j(h^0 \odots h^n) = &
    h^0 \odots h^j \ot 1\ot h^{j+1} \odots h^n, \qquad 0\leq j\leq n.
\end{align*}
The simplicial module $E\C{H}_\bullet$ is contractible, and is a resolution for $k$ via $\d:\C{H}\lra k$.

\begin{lemma}
If $\C{H}$ is a cocommutative Hopf algebra, then $E\C{H}_\bullet$ is a dihedral module by
\begin{equation}\label{dihedral-path-space-tau}
\tau_n(h^0 \odots h^n) = h^0h^1\ps{1}\ldots h^n\ps{1} \ot S(h^1\ps{2}\ldots h^n\ps{2}) \ot h^1\ps{3} \odots h^{n-1}\ps{3}
\end{equation}
and
\begin{equation}\label{dihedral-path-space-om}
\om_n(h^0 \odots h^n) = h^0{h^1\ps{1}}\ldots {h^n\ps{1}} \ot S^{-1}(h^n\ps{2}) \odots S^{-1}(h^1\ps{2}).
\end{equation}
\end{lemma}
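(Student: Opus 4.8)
The plan is to verify directly that $\tau_n$ and $\om_n$ satisfy all the relations of the dihedral category, on top of the simplicial structure already in place. Throughout I use that $\C{H}$ is cocommutative, so that the antipode is involutive, $S^{-1}=S$, and the legs of any iterated coproduct may be permuted freely; I also use that in the relevant modular situation $\d=\ve$ and $\s=1$, which is exactly what makes the $\d$-free operators \eqref{dihedral-path-space-tau} and \eqref{dihedral-path-space-om} compatible with the face $\partial_n$ of the path space. That $\tau_n$ alone turns $E\C{H}_\bullet$ into a cyclic module, namely $\tau_n^{\,n+1}=\Id$ together with the standard compatibilities with the faces and degeneracies, is established in \cite[Sect. 4]{KhalRang02}; I would take this as given and concentrate on the involution $\om_n$ and its interaction with $\tau_n$.

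First I would check involutivity, $\om_n^2=\Id$. Applying $\om_n$ twice, the reversed legs acquire $S^2=\Id$ and so return to their original order (here cocommutativity enters, both through $S^2=\Id$ and through the flip-free coproduct $\Delta S=(S\ot S)\Delta$ needed to split them), while the zeroth slot becomes $h^0\,Q\ps{1}S(Q\ps{2})$ with $Q=h^1\cdots h^n$ --- using that the coproduct is an algebra map to form $Q\ps{1}=h^1\ps{1}\cdots h^n\ps{1}$, and the anti-homomorphism property of $S$ to turn $\prod_j S(h^{n+1-j}\ps{2})$ into $S(Q\ps{2})$ --- so that the antipode and counit axioms collapse it to $h^0$. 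Next I would verify the compatibility with faces, $\partial_i\om_n=\om_{n-1}\partial_{n-i}$, and with degeneracies, $\s_j\om_n=\om_{n+1}\s_{n-j}$. These are routine: in the interior the merged pair $S(h^{n+1-i}\ps{2})S(h^{n-i}\ps{2})=S(h^{n-i}\ps{2}h^{n+1-i}\ps{2})$ is exactly the image under $\om_{n-1}$ of the leg produced by $\partial_{n-i}$, while the boundary cases $i=0$ and $i=n$ reduce by the cancellation $h^n\ps{1}S(h^n\ps{2})=\ve(h^n)1$, which is where $\d=\ve$ is used; the degeneracy check only needs $\Delta(1)=1\ot 1$ and $S(1)=1$.

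The crux of the lemma is the mixed relation $\om_n\tau_n\om_n=\tau_n^{-1}$, equivalently $(\om_n\tau_n)^2=\Id$, the relation that distinguishes the full dihedral group $D_{n+1}$ from its cyclic subgroup. I would either compute $\tau_n^{-1}=\tau_n^{\,n}$ in closed form and compare it slot by slot with $\om_n\tau_n\om_n$, or verify $(\om_n\tau_n)^2=\Id$ head on. Either way one expands using an iterated coproduct of order four or five on each $h^i$, since $\tau_n$ and $\om_n$ each multiply all of $h^1,\dots,h^n$ together and apply $S$; composing them produces long products of the shape $Q\ps{1}S(Q\ps{2})\cdots$ whose order is scrambled by the anti-homomorphism property of $S$. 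This bookkeeping is the main obstacle, and cocommutativity is precisely the tool that lets me re-sort the factors so that the telescoping cancellations $x\ps{1}S(x\ps{2})=\ve(x)1$ fire in the right places. As a guide I would first run the identity in the group-algebra case $\C{H}=kG$: there $S(g)=g^{-1}$ and each slot is a single element, so both $\om_n\tau_n\om_n$ and $\tau_n^{-1}$ become explicit words in $g_0,\dots,g_n$ that are compared directly, and the Sweedler computation is then its faithful cocommutative linearization.
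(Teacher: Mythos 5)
Your proposal follows the paper's proof essentially verbatim: the paper likewise cites \cite[Lemma 4.2]{KhalRang02} for the cyclic structure and then verifies $\om_n^2=\Id$, the compatibilities $\p_i\om_n=\om_n\p_{n-i}$ and $\s_j\om_n=\om_n\s_{n-j}$, and the mixed relation by direct Sweedler computation, taking your second option for the crux --- first simplifying $(\tau_n\om_n)(h^0\odots h^n)$ to $h^0\ot h^1h^2\ps{1}\cdots h^n\ps{1}\ot S^{-1}(h^n\ps{2})\odots S^{-1}(h^2\ps{2})$ and then squaring, exactly the telescoping cancellation you describe. Your observation that $\d=\ve$, $\s=1$ is what makes the boundary face cases work is correct and is in fact more explicit than the paper itself.
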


\begin{proof}
It is checked in \cite[Lemma 4.2]{KhalRang02} that $E\C{H}_\bullet$ is a cyclic module via \eqref{dihedral-path-space-tau}. As a result, we have 
\begin{equation*}
\p_i\tau_n = \tau_{n-1}\p_{i-1}, \qquad \s_i\tau_n = \tau_{n+1}\s_{n-1}, \quad \qquad 1 \leq i \leq n
\end{equation*}
already. We next check that
\begin{align*}
  \om_n^2 & (h^0\odots h^n) \\
 % = & \om_n(h^0{h^1_{(1)}}\ldots {h^n_{(1)}} \ot S^{-1}({h^n_{(2)}}) \odots S^{-1}({h^1_{(2)}}))\\
  = & h^0{h^1_{(1)}}\ldots {h^n_{(1)}}S^{-1}({h^n_{(2)}}) \ldots S^{-1}({h^1}_{(2)}) \ot S^{-1}(S^{-1}({h^1_{(2)}})) \odots S^{-1}(S^{-1}({h^n_{(2)}}))\\
  = & h^0 \ot S^{-2}(h^1)\odots S^{-2}(h^n)\\
  = & h^0\odots h^n,
\end{align*}
that is $\om_n^2 =\Id$. Moreover we have
\begin{align*}
  \p_i\om_n & (h^0\odots h^n)\\
%  = & \p_i(h^0{h^1_{(1)}}\ldots {h^n_{(1)}} \ot S^{-1}({h^n_{(2)}}) \odots S^{-1}({h^1_{(2)}})) \\
  = & h^0{h^1_{(1)}}\ldots {h^n_{(1)}} \ot S^{-1}({h^n_{(2)}}) \odots S^{-1}({h^{n-i+1}_{(2)}})S^{-1}({h^{n-i}_{(2)}}) \odots S^{-1}({h^1_{(2)}}) \\
%  = & h^0{h^1_{(1)}}\ldots {h^n_{(1)}} \ot S^{-1}({h^n_{(2)}}) \odots S^{-1}((h^{n-i}h^{n-i+1})_{(2)}) \odots S^{-1}({h^1_{(2)}}) \\
%  = & \om_n(h^0 \odots h^{n-i}h^{n-i+1} \odots h^n) \\
  = & \om_n\p_{n-i}(h^0\odots h^n),
\end{align*}
and 
\begin{align*}
  \s_j\om_n & (h^0\odots h^n) \\
 % = & \s_j(h^0{h^1_{(1)}}\ldots {h^n_{(1)}} \ot S^{-1}({h^n_{(2)}}) \odots S^{-1}({h^1_{(2)}})) \\
  = & h^0{h^1_{(1)}}\ldots {h^n_{(1)}} \ot S^{-1}({h^n_{(2)}}) \odots S^{-1}({h^{n-j+1}_{(2)}})\ot 1 \ot S^{-1}({h^{n-j}_{(2)}}) \odots S^{-1}({h^1_{(2)}}) \\
 % = & \om_n(h^0 \odots h^{n-j} \ot 1 \ot h^{n-j+1} \odots h^n)
  = & \om_n\s_{n-j}(h^0\odots h^n).
\end{align*}
Finally, we see that
\begin{align*}
  (\tau_n\om_n) & (h^0 \odots h^n)\\
 % = & \tau_n(h^0{h^1_{(1)}}\ldots {h^n_{(1)}} \ot S^{-1}({h^n_{(2)}}) \odots S^{-1}({h^1_{(2)}})) \\
  = & h^0h^1_{(1)}\ldots h^n_{(1)}S^{-1}({h^n_{(2)}}) \ldots S^{-1}({h^1_{(2)}}) \ot S(S^{-1}({h^n_{(3)}}) \ldots S^{-1}({h^1_{(3)}})) \ot S^{-1}({h^n_{(4)}}) \odots S^{-1}({h^2_{(4)}}) \\
  = & h^0 \ot h^1h^2_{(1)} \ldots h^n_{(1)} \ot S^{-1}({h^n_{(2)}}) \odots S^{-1}({h^2_{(2)}}),
\end{align*}
and hence
\begin{align*}
  (\tau_n\om_n)^2(h^0 \odots h^n)
%  = & \tau_n\om_n(h^0 \ot h^1h^2_{(1)} \ldots h^n_{(1)} \ot S^{-1}({h^n_{(2)}}) \odots S^{-1}({h^2_{(2)}}))\\
  = & h^0 \ot h^1h^2_{(1)} \ldots h^n_{(1)}S^{-1}({h^n_{(2)}}) \ldots S^{-1}({h^2_{(2)}}) \ot S^{-2}(h^2_{(3)}) \odots S^{-2}(h^n_{(3)}) \\
  = & h^0 \odots h^n,
\end{align*}
that is $\tau_n\om_n\tau_n\om_n=\Id$, or equivalently $\tau_n\om_n = \om_n\tau_n^{-1}$.
\end{proof}

\begin{lemma}\label{lemma-pi-map}
  If $\C{H}$ is a cocommutative Hopf algebra, then
  $\pi:E\C{H}_n \lra \CC_n(\C{H}; 1,\ve)$ defined as
\begin{equation*}
  \pi(h^0\odots h^n) = \ve(h^0)h^1\odots h^n
\end{equation*}
is a map of dihedral modules.
\end{lemma}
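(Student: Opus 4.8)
The plan is to show that $\pi$ intertwines each of the four families of structure maps defining a dihedral module: the faces $\p_i$, the degeneracies $\s_j$, the cyclic operators $\tau_n$, and the involutions $\om_n$. Compatibility with $\p_i$, $\s_j$, and $\tau_n$ is precisely the cyclic statement established in \cite[Sect.~4]{KhalRang02}; in the present setup each of these also falls out immediately upon applying $\ve$ to the leading tensor factor and using $\ve(h^0 h^1\ps{1}\cdots h^n\ps{1}) = \ve(h^0)\ve(h^1\ps{1})\cdots\ve(h^n\ps{1})$ together with the counit axiom $\ve(g\ps{1})g\ps{2}=g$. Thus the only genuinely new point to verify is $\pi\,\om_n = \om_n\,\pi$.

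First I would pin down the involution on the target $\CC_n(\C{H};1,\ve)$. Working in the cocommutative convention $\ast=\Id$ with the modular pair $(\s,\d)=(1,\ve)$, the general formula \eqref{dihedral-om-Hopf} specializes to
\begin{equation*}
  \om_n(g^1 \odots g^n) = S^{-1}(g^n\ps{1}) \odots S^{-1}(g^1\ps{1})\,\ve(g^1\ps{2} \cdots g^n\ps{2}) = S^{-1}(g^n) \odots S^{-1}(g^1),
\end{equation*}
where the second equality absorbs each $\ve(g^i\ps{2})$ via $\ve(g^i\ps{2})g^i\ps{1}=g^i$.

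The core computation is then to apply $\pi$ to the path-space involution \eqref{dihedral-path-space-om}. Applying $\ve$ to the leading factor collapses $h^0 h^1\ps{1}\cdots h^n\ps{1}$ to $\ve(h^0)$, and each absorbed $\ve(h^i\ps{1})$ turns $S^{-1}(h^i\ps{2})$ into $S^{-1}(h^i)$, so I expect
\begin{equation*}
  \pi\,\om_n(h^0 \odots h^n) = \ve(h^0)\,S^{-1}(h^n) \odots S^{-1}(h^1),
\end{equation*}
which matches $\om_n\,\pi(h^0 \odots h^n) = \ve(h^0)\,\om_n(h^1\odots h^n)$ by the target formula above, giving $\pi\,\om_n = \om_n\,\pi$. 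The analogous check for $\tau_n$, starting from \eqref{dihedral-path-space-tau} and \eqref{dihedral-tau-Hopf}, should yield $\pi\,\tau_n(h^0\odots h^n) = \ve(h^0)\,S(h^1\ps{1}\cdots h^{n-1}\ps{1}h^n)\ot h^1\ps{2}\odots h^{n-1}\ps{2}$ on both sides, recovering the cyclic compatibility of \cite{KhalRang02}.

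The main obstacle is purely bookkeeping: one must track the Sweedler components in \eqref{dihedral-path-space-om} carefully and reindex correctly after the counit absorbs the first tensor slot, noting in particular that the last factor $h^n$ carries one fewer Sweedler leg than $h^1,\dots,h^{n-1}$ in the $\tau_n$ formula. There is no conceptual difficulty beyond this; the cocommutativity hypothesis is exactly what aligns the path-space dihedral structure of the preceding lemma with the target involution, so once the indices are handled the two sides coincide termwise.
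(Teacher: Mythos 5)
Your proposal is correct and follows essentially the same route as the paper: the paper likewise invokes \cite[Lemma 4.2]{KhalRang02} to dispose of the simplicial and cyclic compatibilities and reduces the lemma to the single check $\pi\,\om_n=\om_n\,\pi$, carried out by exactly your computation $\pi\,\om_n(h^0\odots h^n)=\ve(h^0)\ve(h^1\ps{1}\cdots h^n\ps{1})S^{-1}(h^n\ps{2})\odots S^{-1}(h^1\ps{2})=\ve(h^0)\,S^{-1}(h^n)\odots S^{-1}(h^1)=\om_n\,\pi(h^0\odots h^n)$. Your explicit specialization of \eqref{dihedral-om-Hopf} to $(\s,\d)=(1,\ve)$ with $\ast=\Id$, and the supplementary $\tau_n$ bookkeeping, are correct and merely make explicit what the paper leaves implicit.
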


\begin{proof}
As a result of \cite[Lemma 4.2]{KhalRang02} it suffices to observe
\begin{align*}
 \pi\om_n(h^0 \odots h^n)
%  = & \pi(h^0{h^1_{(1)}}\ldots {h^n_{(1)}} \ot S^{-1}({h^n_{(2)}}) \odots S^{-1}({h^1_{(2)}})) \\
  = & \ve(h^0)\ve({h^1_{(1)}}\ldots {h^n_{(1)}})S^{-1}(h^n_{(2)}) \odots S^{-1}(h^1_{(2)}) \\
  = & \ve(h^0)\om_n(h^1\odots h^n) \\
  = & \om_n\pi(h^0\odots h^n)
\end{align*}
as we wanted to show.
\end{proof}

\begin{theorem}\label{thm-dihedral-cocomm}
If $\C{H}$ is a cocommutative Hopf algebra, then for any $n\geq 0$
\begin{equation*}
HC^+_n(\C{H};1,\ve) = \bigoplus_{i\geq 0} H_{n-4i}(\C{H},k),
\quad\text{ and }\quad
HC^-_n(\C{H};1,\ve) = \bigoplus_{i\geq 0} H_{n-4i-2}(\C{H},k).
\end{equation*}
\end{theorem}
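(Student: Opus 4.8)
The plan is to extend the cyclic computation \cite[Thm.~4.1]{KhalRang02} to the dihedral setting, using the contractible dihedral path space $E\C{H}_\bullet$ and the dihedral module map $\pi\colon E\C{H}_\bullet\to\CC_\bullet(\C{H};1,\ve)$ of Lemma~\ref{lemma-pi-map}. First I would fix the Hochschild level: collapsing the leading tensor factor through $\ve$ identifies $\CC_\bullet(\C{H};1,\ve)$ with $k\otimes_\C{H}E\C{H}_\bullet$, and since $E\C{H}_\bullet$ is a free resolution of $k$ this underlying simplicial module is the bar complex computing $\tor^\C{H}_\bullet(k,k)=H_\bullet(\C{H})$. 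As Hochschild homology does not see the cyclic and reflection operators, $HH_n(\CC_\bullet(\C{H};1,\ve))=H_n(\C{H})$.

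I would then recall from \cite[Thm.~4.1]{KhalRang02} that $HC_n(\C{H};1,\ve)=\bigoplus_{i\geq 0}H_{n-2i}(\C{H})$, where the $i$-th summand is the one detected after $i$ applications of Connes' periodicity operator $S$; the splitting is a consequence of the contraction of $E\C{H}_\bullet$, which degenerates the $SBI$-sequence. The dihedral groups $HC^\pm_n(\C{H};1,\ve)$ are the $\pm1$-eigenspaces of the involution induced by $\om_n$ of \eqref{dihedral-om-Hopf}, so everything reduces to locating each $H_{n-2i}(\C{H})$ in these eigenspaces. From the face relations $\p_i\om_n=\om_{n-1}\p_{n-i}$ of the preceding lemma one computes $b\,\om_n=(-1)^n\om_{n-1}\,b$, exactly as in the Corollary on the cohomology side; hence the operator that is an honest chain map, and whose eigenspaces define $HC^\pm$, is the normalization $\wbar{\om}_n:=(-1)^{n(n+1)/2}\om_n$. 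Using the relation $\tau_n\om_n=\om_n\tau_n^{-1}$ and the construction of $S$ from the norm and cyclic operators, $\wbar{\om}$ anticommutes with periodicity, $\wbar{\om}\,S=-S\,\wbar{\om}$. Since for cocommutative $\C{H}$ the reversal-plus-antipode map $h^1\odots h^n\mapsto S^{-1}(h^n)\odots S^{-1}(h^1)$ underlying $\wbar{\om}$ induces the identity on $\tor^\C{H}_\bullet(k,k)$, the anticommutation propagates this to $\wbar{\om}=(-1)^i$ on the $i$-th summand $H_{n-2i}(\C{H})$. Splitting into even and odd $i$ then yields
\begin{align*}
  HC^+_n(\C{H};1,\ve)=\bigoplus_{j\geq 0}H_{n-4j}(\C{H}),\qquad
  HC^-_n(\C{H};1,\ve)=\bigoplus_{j\geq 0}H_{n-4j-2}(\C{H}).
\end{align*}

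The step I expect to be the main obstacle is the sign accounting, namely verifying both $\wbar{\om}\,S=-S\,\wbar{\om}$ and the triviality of $\wbar{\om}$ on $H_\bullet(\C{H})$, since a stray degree-dependent sign would reshuffle the two eigenspaces. The cleanest way to calibrate every sign is to run the argument first for $\C{H}=k$, where $HC_{2i}(k;1,\ve)=k$ and \eqref{dihedral-om-Hopf} gives $\wbar{\om}=(-1)^i$ on $HC_{2i}(k;1,\ve)$ by a direct check, so that $HC^+_n(k;1,\ve)=k$ precisely for $n\equiv0\pmod 4$ and $HC^-_n(k;1,\ve)=k$ precisely for $n\equiv2\pmod 4$; transporting this computation along the contraction of $E\C{H}_\bullet$, with $H_\bullet(\C{H})$ in place of the homology of the point, delivers the general formula.
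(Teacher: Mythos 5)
Your route is genuinely different from the paper's, and in outline it could be made to work, but as written it has a gap at exactly the step you flag as the main obstacle, and your proposed calibration cannot close it. The assertion that the unsigned reversal-plus-antipode map $h^1\odots h^n\mapsto S^{-1}(h^n)\odots S^{-1}(h^1)$ ``induces the identity on $\tor^\C{H}_\bullet(k,k)$'' is false: take $\C{H}=k\B{Z}$, where $H_1(\B{Z},k)\cong k$ is spanned by $[t]$ with $[gh]=[g]+[h]$; the unsigned map sends $[t]\mapsto[t^{-1}]=-[t]$, i.e.\ acts by $-1$. Note this would also contradict your own conclusion: if the unsigned map were the identity on $\tor$, then $\wbar{\om}_n=(-1)^{n(n+1)/2}\om_n$ would act by $(-1)^{n(n+1)/2}$ on the Hochschild summand $H_n(\C{H})\subset HC_n$, not by $+1$ as your ``$\wbar{\om}=(-1)^i$ on the $i$-th summand'' requires. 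The statement you actually need --- that the \emph{signed} operator $\wbar{\om}$ is chain homotopic to the identity on the bar-type complex computing $\tor^\C{H}_\bullet(k,k)$ --- is true (it is classical for group algebras, and is the content behind \cite[Thm. 4.5]{KrasLapiSolo87}), but it is precisely the crux and requires an argument, not degreewise sign bookkeeping. Your calibration at $\C{H}=k$ is structurally incapable of supplying it: since $H_n(k)=0$ for $n>0$, that test only probes the $S$-tower over $H_0$ (confirming $\wbar{\om}=(-1)^i$ on $HC_{2i}(k)$, i.e.\ the anticommutation with $S$), and sees nothing of the eigenvalue of $\wbar{\om}$ on $H_n(\C{H})$ for $n>0$, which the $\B{Z}$-example shows is where the signs are delicate. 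A useful replacement test would be $\C{H}=k\B{Z}$ itself. You should also note that identifying $HC^\pm$ (homology of the eigencomplexes) with the $\pm$-eigenspaces of $\wbar{\om}$ on $HC$, and choosing the splitting of the $SBI$-sequence from \cite{KhalRang02} compatibly with $\wbar{\om}$, both need a word --- harmless in characteristic zero, but not free.

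For comparison, the paper's proof turns your final sentence into the entire argument and thereby avoids every sign question above. Using Lemma~\ref{lemma-pi-map} it identifies $\CC^\pm_\bullet(\C{H};1,\ve)\cong k\ot_\C{H}\CC^\pm_\bullet(E\C{H}_\bullet)$ as bicomplexes; contractibility of $E\C{H}_\bullet$ (with $\ve$ as contracting homotopy) makes the two dihedral bicomplexes Cartan--Eilenberg resolutions of the $4$-periodic complexes $k^\pm_\bullet$, with $k$ placed in degrees $\equiv 0$, respectively $\equiv 2 \pmod 4$; hence $HC^\pm_n(\C{H};1,\ve)=\B{H}_n(\C{H},k^\pm_\bullet)$, which is computed by swapping in a resolution with zero horizontal maps. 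The eigenvalue of the involution on $H_\bullet(\C{H})$ is never computed --- it is absorbed into the independence of hyperhomology from the chosen resolution. If you wish to keep your eigenspace argument, supply the chain homotopy $\wbar{\om}\simeq\Id$ on the complex computing $\tor^\C{H}_\bullet(k,k)$ (or cite the group-algebra version and extend it to cocommutative $\C{H}$); with that in hand, your bookkeeping $HC^+_n=\bigoplus_j H_{n-4j}$ and $HC^-_n=\bigoplus_j H_{n-4j-2}$ does follow.
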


\begin{proof}
Let us first note that $E\C{H}_\bullet$ is a (left) $\C{H}$-module by $h \cdot (h^0 \odots h^n) = hh^0 \odots h^n$, and hence $\CC_\bullet(\C{H};1,\ve) = k\ot_\C{H} E\C{H}_\bullet$. Moreover, as a result of Lemma \ref{lemma-pi-map} we have $\CC^\pm_\bullet(\C{H};1,\ve) \cong k \ot_\C{H} \CC^\pm_\bullet(E\C{H}_\bullet)$ as bicomplexes where the latter is the dihedral bicomplex of the dihedral module $E\C{H}_\bullet$.

Since $E\C{H}_\bullet$ is contractible the vertical homology vanishes, and since $\ve:\C{H}\lra k$ is the contracting homotopy, the double complex $\CC^+_\bullet(E\C{H}_\bullet)$ is a (Cartan-Eilenberg) resolution for 
\begin{equation*}
k^+_\bullet\,:\quad k \longleftarrow 0 \longleftarrow 0 \longleftarrow 0 \longleftarrow k \longleftarrow 0 \longleftarrow 0 \longleftarrow 0 \longleftarrow \ldots
\end{equation*}
and $\CC^-_\bullet(E\C{H}_\bullet)$ is for 
\begin{align*}
k^-_\bullet\,:\quad 0 \longleftarrow 0 \longleftarrow k \longleftarrow 0 \longleftarrow 0 \longleftarrow 0 \longleftarrow k \longleftarrow 0 \longleftarrow 0 \longleftarrow \ldots
\end{align*}
As a result,
\begin{align*}
  HC^+_n(\C{H};1,\ve) 
  = & H_n({\rm Tot}(\CC^+(\C{H};1,\ve))) = H_n({\rm Tot}(k \ot_\C{H} \CC^+(E\C{H})))\\
  = & H_n(k \ot_\C{H} {\rm Tot}(\CC^+(E\C{H}))) = \mathbb{H}_n(\C{H},k^+_\bullet),
\end{align*}
and similarly
\begin{align*}
  HC^-_n(\C{H};1,\ve)
  = & H_n({\rm Tot}(\CC^-(\C{H};1,\ve))) = H_n({\rm Tot}(k \ot_\C{H} \CC^-(E\C{H}))) \\
  = & H_n(k \ot_\C{H} {\rm Tot}(\CC^-(E\C{H}))) = \mathbb{H}_n(\C{H},k^-_\bullet),
\end{align*}
where the latter objects are the hyperhomologies of the complexes
$k^+_\bullet$ and $k^-_\bullet$, respectively. Since the hyperhomology
is independent of the (Cartan-Eilenberg) resolution chosen, we can
replace the double complex $\CC^+_\bullet(E\C{H}_\bullet)$ with one
having zeros as the horizontal maps, and having resolutions of $k$ on
the zeroth (mod 4) column.  Similarly, we replace the double complex
$\CC^-_\bullet(E\C{H}_\bullet)$ with one having resolutions of $k$ on
the second (mod 4) column. Therefore, for the $+1$-eigenspace we get
\begin{align*}
\xymatrix{
  \vdots \ar[d] & \vdots \ar[d] & \vdots \ar[d] & \vdots \ar[d] & \vdots \ar[d] &    \\
H_1(\C{H},k) \ar[d]   & 0 \ar[l] \ar[d]   & 0 \ar[l] \ar[d]  & 0 \ar[l] \ar[d] & H_1(\C{H},k) \ar[l] \ar[d] & \ldots \ar[l] \\
H_0(\C{H},k)   & 0 \ar[l]   & 0 \ar[l]  & 0\ar[l]  & H_0(\C{H},k) \ar[l]  & \ldots \ar[l]
}
\end{align*}
and for the $-1$-eigenspace we get
\begin{align*}
\xymatrix{
  \vdots \ar[d] & \vdots \ar[d] & \vdots \ar[d] & \vdots \ar[d] & \vdots \ar[d] & \vdots \ar[d] & \vdots \ar[d] &    \\
0 \ar[d] & 0 \ar[d] \ar[l] & H_1(\C{H},k) \ar[d] \ar[l]  & 0 \ar[l] \ar[d]   & 0 \ar[l] \ar[d]  & 0 \ar[l] \ar[d] & H_1(\C{H},k) \ar[l] \ar[d] & \ldots \ar[l] \\
0 & 0 \ar[l] & H_0(\C{H},k) \ar[l]  & 0 \ar[l]   & 0 \ar[l]  & 0\ar[l]  & H_0(\C{H},k) \ar[l]  & \ldots \ar[l]
}
\end{align*}
on the $E_2$-page (and hence on the $E_\infty$-page, since the horizontal maps are zero) of the bicomplex computing the hyperhomology $\mathbb{H}_n(\C{H},k^\pm_\ast)$. The claim then follows.
\end{proof}

We finally relate the Hopf-dihedral homology of a group algebra to the
dihedral group homology.  We recall the basic facts from
\cite{KrasLapiSolo87}. Let $G$ be a (discrete) group, and $\C{A}$ be
the group algebra $kG$. Then the graded space given by
\begin{equation}\label{dihedral-group-hom-complex}
E\C{A}^\pm_n := {\rm Span}\{\,g_0\odots g_n \in \CC^\pm_n(\C{A}) \mid \, g_0\ldots g_n = 1\}
\end{equation}
form a dihedral submodule of the standard dihedral module
$\CC^\pm_\bullet(\C{A})$. The dihedral homology of this dihedral module is
called the dihedral group homology of the group $G$, and is denoted by
$HC_\bullet^\pm(G,k)$.

It follows at once from \cite[Cor. 4.4]{KrasLapiSolo87} and Theorem
\ref{thm-dihedral-cocomm} that the Hopf-dihedral homology of the Hopf
algebra $kG$ is isomorphic to the dihedral group homology of the group
$G$. In the following proposition we record also the explicit
isomorphism between the complexes.

\begin{proposition}\label{prop-group-dihedral}
For any (discrete) group $G$, $HC^\pm_\bullet(kG;1,\ve) \cong HC^\pm_\bullet(G,k)$.
\end{proposition}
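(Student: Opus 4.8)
The plan is to exhibit an explicit isomorphism of dihedral modules between $\CC^\pm_\bullet(kG; 1, \ve)$ and the dihedral submodule $E\C{A}^\pm_\bullet$ of $\CC^\pm_\bullet(kG)$ defined in \eqref{dihedral-group-hom-complex}, and then invoke the fact that passing to (dihedral) homology is functorial. Since Theorem~\ref{thm-dihedral-cocomm} already computes $HC^\pm_\bullet(kG;1,\ve)$ in terms of $H_\bullet(kG,k)$ for the cocommutative Hopf algebra $\C{H}=kG$ (note $kG$ is cocommutative, so $(\s,\d)=(1,\ve)$ is a legitimate MPI and the $*$-coalgebra structure may be taken with $\ast=\Id$ as remarked in the text), and since \cite[Cor.~4.4]{KrasLapiSolo87} identifies the dihedral group homology $HC^\pm_\bullet(G,k)$ with the same group-homology data, the two sides agree abstractly; the content of the proposition is the \emph{explicit} chain-level comparison map.

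First I would write down the candidate map at the level of graded modules. On $\CC_n(kG;1,\ve)=\bigoplus (kG)^{\ot n}$ define $\Phi_n(g_1\odots g_n) = (g_1\cdots g_n)^{-1}\ot g_1\odots g_n$, landing in $(kG)^{\ot n+1}$, and observe its image lies in $E\C{A}_n$ since $(g_1\cdots g_n)^{-1}g_1\cdots g_n = 1$. Conversely, $\Psi_n(g_0\odots g_n)=g_1\odots g_n$ restricted to the submodule $g_0\cdots g_n=1$ is a two-sided inverse. This is the standard normalization identifying the reduced bar-type complex with the condition $g_0\cdots g_n=1$; it is well known in the cyclic setting, so the only genuine work is checking compatibility with the \emph{dihedral} operators, namely $\tau_n$ and $\om_n$.

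The heart of the argument, and the step I expect to be the main obstacle, is verifying $\Phi_n \om_n = \om_n \Phi_n$ where the source $\om_n$ is \eqref{dihedral-om-Hopf} specialized to $kG$ with $\s=1$, $\d=\ve$, and the target $\om_n$ is the standard involution on $\CC^\pm_\bullet(kG)$ coming from the dihedral module structure on $\mathbf{C}_\bullet(\C{A})$. For $kG$ every $g$ is group-like, so $\D(g)=g\ot g$, $S(g)=g^{-1}$, $S^{-1}=S$, and with $\ast=\Id$ the formula \eqref{dihedral-om-Hopf} collapses to $\om_n(g_1\odots g_n)= g_n^{-1}\odots g_1^{-1}$ (the $\d$-factors all become $1$ and the $S^{-1}(\,\cdot\,^*)$ become inversion). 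Matching this against the target involution, which sends $g_0\ot g_1\odots g_n$ to something of the form $g_0^{-1}\ot g_n^{-1}\odots g_1^{-1}$ after the appropriate relabeling, is then a direct verification using only group relations; the $\tau_n$-compatibility reduces similarly to the already-cited cyclic case \cite[Sect.~4]{KhalRang02}. The care required is purely bookkeeping: keeping track of the normalizing scalar $(g_1\cdots g_n)^{-1}$ through the reversal induced by $\om_n$ and confirming it is preserved, so that $\Phi$ descends to the $\pm1$ eigenspaces and hence induces the claimed isomorphism on dihedral homology.
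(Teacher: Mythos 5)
Your proposal is correct and takes essentially the same route as the paper: your $\Phi_n$ is exactly the map $\t_n$ of \eqref{theta-group}, with the same inverse given by dropping the normalizing first tensor factor on the subspace $g_0\cdots g_n=1$. Your direct check of $\om_n$-equivariance (together with the cyclic compatibility cited from \cite[Prop.~3.2]{KhalRang02}) is precisely the group-like specialization of the computation $\om_n\t=\t\om_n$ carried out in the proof of Theorem~\ref{direct-summand}, which is what the paper invokes at this point.
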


\begin{proof}
  It follows from \eqref{dihedral-group-hom-complex} that
  $\t_n:\CC^\pm_n(kG;1,\ve) \lra EkG^\pm_n$ given by
  \begin{equation}\label{theta-group}
    \t_n(g_1\odots g_n):= (g_1\ldots g_n)^{-1} \ot g_1 \odots g_n
  \end{equation}
  is an isomorphism. It is also straightforward from the proof of
  Theorem \ref{direct-summand} that \eqref{theta-group} is a map of
  dihedral modules.
\end{proof}

As a result, we recover \cite[Thm. 4.5]{KrasLapiSolo87} from Theorem
\ref{direct-summand}. Moreover, from 
\begin{equation*}
HC^\a_n(G,k)\cong HC^\a_n(kG;1,\ve) = H_{n+\a-1}(G,k) \oplus H_{n+\a-5}(\C{H},k) \oplus H_{n+\a-9}(\C{H},k) \oplus \ldots \\
\end{equation*}
for $\a=\pm1$  we get the embeddings
\begin{equation}\label{embedding-i}
i_\ell:H_n(G,k) \lra HC^\a_{n+2\ell}(G,k), \qquad \a =(-1)^\ell.
\end{equation}

\section{Hopf-dihedral homology and $L$-theory}\label{Sect6}

\subsection{Hopf-dihedral Chern character}\label{Section:DihedralChernCharacter}

In this subsection we realize the Chern character as a homomorphism with values in the Hopf-dihedral homology. More precisely, given an involutive ring 
$\C{A}$, we will transfer the $KU$-groups of $\C{A}$ to the Hopf-dihedral homology of a Hopf algebra $\C{H}$ on which $\C{A}$ is a comodule algebra. Let us first recall the construction of the $KU$-functor from \cite{Bass73,KrasLapiSolo87}. For the $L$-functor, we refer the reader to \cite{Cortinas:1993a,Cortinas:1993b}, and the references therein.

Let $\C{A}$ be an algebra with involution, and $U^\epsilon_n(\C{A})$ the set of $2n\times 2n$ matrices with coefficients in $\C{A}$ preserving the quadratic form
\begin{equation*}
{}_\epsilon J = \left(\begin{array}{cc}
0 & I_n \\
\epsilon I_n & 0
\end{array}\right).
\end{equation*}
We note from \cite{Karoubi-book} that they are the matrices satisfying $M^\star M = MM^\star = I_{2n}$, where
\begin{equation*}
M^\star=\left(\begin{array}{cc}
D^\dagger & \epsilon B^\dagger \\
\epsilon C^\dagger & A^\dagger
\end{array}\right), \qquad M = \left(\begin{array}{cc}
A & B \\
C & D
\end{array}\right).
\end{equation*}
Here $\dagger$ denotes the conjugate transpose of an $n\times n$ matrix. We note also from \cite[Ex. 4.16]{Karoubi-book} that $U^1_n(\B{R}) = O(n,n)$, and $U^{-1}_n(\B{R}) = Sp(2n,\B{R})$. Now we set
\begin{equation*}
U^\epsilon(\C{A}) := \text{colim}_n\ U^\epsilon_n(\C{A}),
\end{equation*}
and following \cite{KrasLapiSolo87} we denote by $BU^\epsilon(\C{A})$
the classifying space of the group $U^\epsilon(\C{A})$. It follows
from \cite[Prop. 5.1(b) and Thm. 5.2]{Bass73} that the second derived
subgroup of $U^\epsilon(\C{A})$ is equal to its first derived
subgroup, i.e. it is quasi-perfect.  Thus, the
commutator subgroup of $U^\epsilon(\C{A})$ is a perfect normal
subgroup. As a result, the plus construction
\cite{Kerv69,Adams-book,Quill72} can be applied to the classifying
space $BU^\epsilon(\C{A})$ to obtain the space $BU^\epsilon(\C{A})^+$
such that
\begin{equation*}
H_\bullet(BU^\epsilon(\C{A})) \cong H_\bullet(BU^\epsilon(\C{A})^+)
\end{equation*}
and that 
\begin{equation*}
\pi_1(BU^\epsilon(\C{A})^+) = \frac{U^\epsilon(\C{A})}{[U^\epsilon(\C{A}),U^\epsilon(\C{A})]}.
\end{equation*}
Then, by definition \cite[Def. 4.17]{Karoubi-book}, the Hermitian algebraic $K$-theory of the algebra
$\C{A}$ is given by
\begin{equation*}
KU^\epsilon_n(\C{A}):=\pi_n(BU^\epsilon(\C{A})^+), \qquad n\geq 1,
\end{equation*}
and for $n=0$, $KU^\epsilon_0(\C{A})$ is the Grothendieck group of the category of non-degenerate quadratic forms over $\C{A}$. We now consider the composition 
\begin{align*}
  H_s(U_n^\epsilon(\C{A}),k) & \xrightarrow{i_{\ell}} 
  HC^\a_{s+2\ell}(U_n^\epsilon(\C{A}),k) \xrightarrow{j_{s+2\ell}}\\
   & HC^\a_{s+2\ell}(kU_n^\epsilon(\C{A})) \xrightarrow{J_\ell} 
     HC^\a_{s+2\ell}(M_{2n}(\C{A}))\xrightarrow{{\rm Tr}_\ell}
     HC^\a_{s+2\ell}(\C{A}),
\end{align*}
where $\a=(-1)^\ell$, and
\begin{enumerate}[(i)]
\item $i_{\ell}$ is the mapping given by \eqref{embedding-i},
\item $j_{s+2\ell}$ is the map induced by the inclusion of the
  dihedral group homology of a group into the dihedral homology of the
  group algebra of this group, 
\item $J_\ell$ is induced by the inclusion of $kU_n^\epsilon(\C{A})$
  into $M_{2n}(\C{A})$, 
\item ${\rm Tr}_\ell$ is induced by the trace map inducing the Morita
  isomorphism.
\end{enumerate}

Taking colimit over $n$ we obtain the mappings
$L^\ell:H_s(U^\epsilon(\C{A}),k) \lra HC^\a_{s+2\ell}(\C{A})$ for
$\ell\geq 0$ and $a=(-1)^\ell$.  Finally, the dihedral Chern character
\begin{equation}\label{dihedral-chern}
{\rm Ch}_s^\ell:KU_s^\epsilon(\C{A}) \lra HC^\a_s(\C{A}), \qquad s\geq 0, \,\,\a=(-1)^\ell,
\end{equation}
is defined to be the composition
\begin{align*}
  KU_s^\epsilon(\C{A}) = \pi_s(BU^\epsilon(\C{A})^+) & \xrightarrow{\ h\ } 
  H_s(BU^\epsilon(\C{A})^+) \xrightarrow{\ \sim\ } \\
  & H_s(BU^\epsilon(\C{A})) = H_s(U^\epsilon(\C{A}),k) \xrightarrow{\ L^\ell\ } HC^\a_{s+2\ell}(\C{A}),
\end{align*}
where $h$ is the Hurewicz homomorphism. In particular we have
\begin{equation}\label{DihedralChernCharacter}
{\rm Ch}_0^\ell:KU_0^\epsilon(\C{A})\xrightarrow{S_\ell} 
  HC^\a_{2\ell}(M_{2n}(\C{A})) \xrightarrow{{\rm Tr}_\ell} HC^\a_{2\ell}(\C{A}),
\end{equation}
where $S_\ell:[p] \mapsto [p\odots p]$.

\subsection{Hopf-dihedral homology of a group ring}

Following \cite{Misc76}, we note from \cite{GelfMisc69,Novi66} that
the problems of modifying even-dimensional multiply-connected
manifolds with fundamental group $\pi$ leads to a need to compute
$KU_0(\B{Z}\pi)$ for the group algebra $\B{Z}\pi$. As for the
odd-dimensional manifolds, it is shown in \cite{Novi70} that one
similarly needs to study $KU_1(\B{Z}\pi)$, where $\pi$ is the
fundamental group of the manifold.  See also \cite{Wall-book}.

Motivated by these discussions, we will study the dihedral Chern
character map \eqref{dihedral-chern} for the group algebra
$\C{A}=k\pi$ for a group $\pi$.  We will show that the dihedral Chern
character \eqref{dihedral-chern} lands in the Hopf-dihedral homology
of $k\pi$ which is a direct summand of the algebra dihedral homology
of $k\pi$. In view of Theorem \ref{thm-dihedral-cocomm} we gain a
computational advantage relating the $KU$-groups of $k\pi$ to the
group homology of the group $\pi$.

\begin{theorem}
  Let $\pi$ be a (discrete) group, and $\C{A}=k\pi$ the group algebra
  of the group $\pi$. Then the dihedral Chern character
  \eqref{dihedral-chern} lands in the Hopf-dihedral homology of the
  Hopf algebra $k\pi$.
\end{theorem}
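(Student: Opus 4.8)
The plan is to realise the dihedral Chern character \eqref{dihedral-chern} as factoring through the direct summand $HC^\a_\bullet(k\pi;1,\ve)\hookrightarrow HC^\a_\bullet(k\pi)$ provided by Theorem~\ref{direct-summand}. The group algebra $k\pi$ carries the modular pair $(1,\ve)$ in involution, and the functional $Tr_\pi(\sum_g c_g\,g)=c_e$ is a $1$-invariant $\ve$-trace with $Tr_\pi(1)=1$ invertible; thus Theorem~\ref{direct-summand} applies, the characteristic map $\gamma$ of \eqref{char-map-Hopf-homology} (a map of dihedral modules by Proposition~\ref{prop-dihedral-char-map}) retracts $HC^\a_\bullet(k\pi)$ onto the Hopf-dihedral summand, and the section $\theta$ of \eqref{char-map-Hopf-homology-teta} satisfies $\gamma\theta=\Id$. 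It then suffices to show that the idempotent $\theta\gamma$ fixes the image of \eqref{dihedral-chern}.

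First I would unwind the inner maps of $L^\ell$: after the embedding $i_\ell$ of \eqref{embedding-i} and the inclusion $j$, the class lies in the dihedral group homology $HC^\a_\bullet(U^\epsilon(k\pi),k)$, which by Proposition~\ref{prop-group-dihedral} is exactly the Hopf-dihedral homology $HC^\a_\bullet(kU^\epsilon(k\pi);1,\ve)$. Under $j$ it is carried into $HC^\a_\bullet(kU^\epsilon(k\pi))$ as the image of the section \eqref{theta-group}, hence it is represented by dihedral chains $M_0\odots M_s$ of group elements $M_i\in U^\epsilon(k\pi)$ subject to the closure relation $M_0\cdots M_s=I$.

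Next I would equip $M_{2n}(k\pi)\cong M_{2n}(k)\ot k\pi$ with the right $k\pi$-comodule algebra structure $\Id\ot\D$, whose invariant $\ve$-trace $Tr_M$—the ordinary matrix trace of the $e$-homogeneous block $A_e\in M_{2n}(k)$—factors as $Tr_M=Tr_\pi\circ\mathrm{tr}$ through the Morita trace $\mathrm{tr}\colon M_{2n}(k\pi)\to k\pi$ inducing $\Tr_\ell$. The decisive point is that this trace intertwines the characteristic maps, $\gamma\circ\Tr_\ell=\gamma_M$, as maps $HC^\a_\bullet(M_{2n}(k\pi))\to HC^\a_\bullet(k\pi;1,\ve)$, where $\gamma_M$ is the characteristic map of $M_{2n}(k\pi)$; this is immediate from \eqref{char-map-Hopf-homology} and the factorisation above. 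Consequently $\theta\gamma\,\Tr_\ell(M_0\odots M_s)=\theta\gamma_M(M_0\odots M_s)$, and a direct computation identifies the right-hand side with the trivial-conjugacy-class part of the cycle $\Tr_\ell(M_0\odots M_s)\in\CC^\a_\bullet(k\pi)$, namely its terms $g_0\odots g_s$ with $g_0\cdots g_s=e$.

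The heart of the matter—and the step I expect to be the main obstacle—is to prove that the complementary terms, those with $g_0\cdots g_s\neq e$, vanish in homology, so that $\theta\gamma$ indeed fixes the class. For this I would use the closure relation $M_0\cdots M_s=I$, which on $e$-homogeneous blocks forces $\sum_{g_0\cdots g_s=\gamma}(M_0)_{g_0}\cdots(M_s)_{g_s}=\delta_{\gamma,e}\,I$ for every $\gamma\in\pi$, and hence the vanishing of the associated partial traces. Feeding this into the conjugacy-class decomposition of $HC^\a_\bullet(k\pi)$ (the dihedral analogue of Burghelea's decomposition), under which $\theta\gamma$ is the projection onto the $[e]$-summand, shows that the components indexed by nontrivial conjugacy classes are already null-homologous; Morita invariance of dihedral homology then matches the degrees. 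This identifies $[\Tr_\ell(M_\bullet)]$ with its $\theta\gamma$-image, so the dihedral Chern character lands in $HC^\a_\bullet(k\pi;1,\ve)$, as claimed.
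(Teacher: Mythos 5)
Your architecture is the same as the paper's: the paper likewise trades the inclusion $j_\bullet$ for the section $\t_\bullet$ of \eqref{theta-group} (your $\theta\gamma$-projection formalism is just an explicit repackaging of this, and your chain-level identifications --- the MPI $(1,\ve)$ with the trace $Tr_\pi$, Theorem~\ref{direct-summand} giving $\gamma\theta=\Id$, the fact that $\theta\gamma$ is precisely the projection onto words $g_0\odots g_s$ with $g_0\cdots g_s=e$, and the intertwining $\gamma\circ \Tr_\ell=\gamma_M$ --- are correct and consistent with it), identifies the image of $\t_\bullet$ with the $[1]$-component of Burghelea's conjugacy-class decomposition, and then rests on exactly the statement you flag as the heart of the matter: that the generalized trace kills the nontrivial conjugacy-class contributions in homology. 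The difference is how that pivotal step is handled, and there your argument has a genuine gap.

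From $M_0\cdots M_s=I$ you correctly extract $\sum_{g_0\cdots g_s=\gamma}(M_0)_{g_0}\cdots (M_s)_{g_s}=\delta_{\gamma,e}\,I$ in $M_{2n}(k)$, hence the vanishing of the scalar partial traces $\sum_{g_0\cdots g_s=\gamma}\mathrm{tr}\bigl((M_0)_{g_0}\cdots (M_s)_{g_s}\bigr)=0$ for $\gamma\neq e$. But the $[\gamma]$-component of $\Tr_\ell(M_0\odots M_s)$ is the chain $\sum_{g_0\cdots g_s\in[\gamma]}\mathrm{tr}\bigl((M_0)_{g_0}\cdots(M_s)_{g_s}\bigr)\,g_0\odots g_s$, whose individual coefficients do not vanish, and a cycle in the $[\gamma]$-subcomplex whose coefficients sum to zero need \emph{not} be a boundary: by Burghelea's theorem the $[\gamma]$-summand computes the (dihedral) homology of the twisted nerve $B_\cdot(\pi_\gamma,\gamma)$ of the centralizer, which is nonzero in positive degrees in general --- e.g.\ for $\gamma$ of finite order with infinite centralizer, as for $\pi=\B{Z}\times\B{Z}/2$ and $\gamma$ the torsion generator, where the degree-one part already contains $H_1(\pi;k)=k$ --- while the coefficient-sum functional only detects the degree-zero shadow of the class. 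So ``vanishing of the associated partial traces'' does not yield ``null-homologous''; the scalar consequence you use is strictly weaker than the matrix identity, and nothing in the proposal builds the required homotopy from the latter. The missing ingredient is the compatibility of the generalized trace with the conjugacy-class decomposition at the level of cyclic/dihedral nerves, i.e.\ that on homology $\Tr_\bullet$ carries the $[1]$-component $HC_\bullet(k[B_\cdot({U^\epsilon_n(\C{A})}_1,1)])$ into the $[1]$-component $HC_\bullet(k[B_\cdot(\pi)_1,1])$; this structural fact is exactly what the paper's proof invokes from \cite{Burg85} (see also \cite[Sect.~7.4]{Loday-book}), and it is the one point your proposal would have to prove rather than infer from the trace sums.
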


\begin{proof}
  We first note, as a result of Proposition \ref{prop-group-dihedral},
  that we may replace, up to homology, the inclusion
  \begin{equation*}
    j_\bullet:HC_\bullet^\pm(U^\epsilon_n(\C{A}),k) \lra HC_\bullet^\pm(kU^\epsilon_n(\C{A})),
  \end{equation*}
  by the section
  \begin{equation*}
    \t_\bullet:HC_\bullet^\pm(kU^\epsilon_n(\C{A});1,\ve) \lra HC_\bullet^\pm(kU^\epsilon_n(\C{A}))
  \end{equation*}
  of the characteristic homomorphism
  $\gamma_\bullet:HC_\bullet^\pm(kU^\epsilon_n(\C{A})) \lra
  HC_\bullet^\pm(kU^\epsilon_n(\C{A});1,\ve)$.
  It then follows from Equation~\eqref{theta-group} that the image of
  $\t_\bullet$ is the $[1]$-component of the decomposition of the
  cyclic homology $HC_\bullet(kU^\epsilon_n(\C{A}))$ along the
  conjugacy classes of elements $[z]$ in $U^\epsilon_n(\C{A})$ 
  \[ \bigoplus_{[z]} HC_\bullet(k[B_\cdot
  ({U^\epsilon_n(\C{A})}_z,\,z)]) \]
  as described in \cite{Burg85} and \cite[Sect. 7.4]{Loday-book}. Now,
  the trace map induces a map on homology of the form
  \begin{equation*}
    Tr_\bullet\colon HC_\bullet(k[B_\cdot
    ({U^\epsilon_n(\C{A})}_1,\,1)]) \lra HC_\bullet(k[B_\cdot
    (\pi)_1],\, 1).
  \end{equation*}
  Then by \cite[Thm. I]{Burg85} we know that the $[1]$-component has
  homology of the form
  \begin{equation*}
    HC_\bullet(k[B_\cdot (\pi)_1,\,1)]) = \bigoplus_{i\geq 0}H_{\bullet - 2i}(\pi).
  \end{equation*}
  The result then follows.
\end{proof}

\subsection{Podleś spheres}

We next recall from \cite{NoumMima90} the (two parameters) quantum Podleś sphere $\C{O}(S_q(c,d))$, along with the coaction of the coordinate (Hopf) algebra $\C{O}(SU_q(2))$ of the quantum group $SU_q(2)$. 

The compact quantum group (CQG) algebra $\C{O}(SU_q(2))$ is the algebra generated by $x,u,v,y$ subject to the relations
\begin{align*}
& ux=qxu, \qquad vx=qxv, \qquad yu=quy, \qquad yv=qvy,\\
& vu=uv, \qquad xy-q^{-1}uv=yx-quv=1,
\end{align*}
and its Hopf algebra structure is given by
\begin{align*}
& \D(x)=x\ot x + u\ot v, \qquad \D(u)=x\ot u + u\ot y, \\
& \D(v)=v\ot x + y\ot v, \qquad \D(y)=v\ot u + y\ot y, \\
& \ve(x) = \ve(y) = 1, \qquad \ve(u) = \ve(v) = 0, \\
& S(x) = y, \quad S(y) = x, \quad S(u)=-qu, \quad S(v) = -q^{-1}v, \\
\end{align*}
see also \cite{MasuMimaNakaNoumUeno91}. We next note from \cite[Prop. 11.34]{KlimSchm-book} that there is a family $\{f_z\}_{z\in \B{C}}$ of characters of $\C{O}(SU_q(2))$, uniquely determined by 
\begin{enumerate}[(i)]
\item the functions $z\mapsto f_z(a)$ is an entire function of exponential growth on the right-half plane for any $a \in \C{O}(SU_q(2))$,
\item $f_zf_{z'}=f_{z+z'}$, with $f_0=\ve$,
\item $h(ab)=h(b(f_1\cdot a \cdot f_1))$, for any $a,b \in \C{O}(SU_q(2))$, and the Haar state $h$,
\end{enumerate}
satisfying\footnote[1]{It follows from the (non-degenerate) pairing between the CQG algebra $\C{O}(SU_q(2))$ and the QUE algebra $U_q(su_2)$, for the details of which we refer the reader to \cite{SchmWagn04}, that the character $f_1$ corresponds to the evaluation by $K^2 \in U_q(su_2)$.}
\begin{equation*}
S^2(a) = f_{-1} \cdot a \cdot f_1, \qquad f_z(S(a)) = f_{-z}(a)
\end{equation*}
for any $a\in \C{O}(SU_q(2))$. As a result, the pair $(f_1,1)$ is a MPI for the Hopf algebra $\C{O}(SU_q(2))$. Indeed,
\begin{equation*}
\widetilde{S}_1(a) = f_1(a_{(2)})S(a_{(1)}) = S(f_1\cdot a),
\end{equation*}
and therefore,
\begin{equation*}
\widetilde{S}^2_1(a) = f_1(a_{(3)})f_1(S(a_{(1)}))S^2(a_{(2)}) = f_1(a_{(3)})f_{-1}(a_{(1)})S^2(a_{(2)}) = S^2(f_1\cdot a \cdot f_{-1}) = a.
\end{equation*}

The Podleś spheres $\C{O}(S_q(c,d))$ on the other hand, is defined to be the algebra generated by $z_{-1}$, $z_0$, $z_1$ subject to the relations
\begin{align}\notag
z_0^2 -qz_1z_{-1}-q^{-1}z_{-1}z_1 = & d1, \\\label{relations-Podles}
(1-q^2)z_0^2+qz_{-1}z_1 - qz_1z_{-1} = & (1-q^2)cz_0, \\\notag
z_{-1}z_0 - q^2z_0z_{-1} = & (1-q^2)cz_{-1}, \\\notag
z_0z_1 - q^2z_1z_0 = & (1-q^2)cz_1.
\end{align}
In particular, the algebra $\C{O}(S_q(s,1+s^2))$ is denoted simply by $\C{O}(S^2_{qs})$, and it can be realized as a subalgebra of $\C{O}(SU_q(2))$ via $a_{i}\mapsto \widetilde{a_i}$ for $i=-1,0,1$ where
\begin{align*}
\widetilde{z_{-1}} := &(1+q^2)^{-1/2}x^2 + s(1+q^{-2})^{1/2}xv - q(1+q^2)^{-1/2}v^2, \\
\widetilde{z_0} := & ux + s(1+(q+q^{-1})uv) - vy, \\
\widetilde{z_1} := & (1+q^2)^{-1/2}u^2 + s(1+q^{-2})^{1/2}yu - q(1+q^2)^{-1/2}y^2.
\end{align*}
The algebra $\C{O}(S^2_{qs})$ carries the $\ast$-structure given by 
\begin{equation*}
{z_i}^* := (-q)^iz_{-i}.
\end{equation*} 
In case $s=0$, the algebra $\C{O}(S^2_{q0})$ is called the standard Podleś sphere, and is denoted by $\C{O}(S_q^2)$. Next, we recall the $\C{O}(SU_q(2))$-coaction from \cite[Prop. 4.25]{KlimSchm-book}. To this end, let
\begin{align*}
W_1 = & (w_{i,j})_{i,j\in \{-1,0,1\}} = \left(\begin{array}{ccc}
w_{-1,-1} & w_{-1,0} & w_{-1,1} \\
w_{0,-1} & w_{0,0} & w_{0,1} \\
w_{1,-1} & w_{1,0} & w_{1,1}
\end{array}\right) \\
= & \left(\begin{array}{ccc}
x^2 & (1+q^2)^{1/2}xu & u^2 \\
(1+q^2)^{1/2}xv & 1+(q+q^{-1})uv & (1+q^2)^{1/2}uy \\
v^2 & (1+q^2)^{1/2}vy & y^2
\end{array}\right).
\end{align*}
Then, the comultiplication on $\C{O}(SU_q(2))$ induces (once $a_i$'s are identified with $\widetilde{a}_i$'s) a coaction of the form $\nb\colon\C{O}(S_{qs}^2) \lra \C{O}(S_{qs}^2) \ot \C{O}(SU_q(2))$,
\begin{equation*}
\nb(z_i)= z_j \ot w_{j,i},
\end{equation*}
where $i,j\in\{-1,\,0,\,1\}$.  A left version of this coaction can be
found in \cite{MasuNakaWata91}.

Let us next recall from \cite{SchmWagn04}, and from \cite{Podl87,BrzeHaja09}, that the Podleś sphere is a (left) coideal subalgebra, as such, taking the quotient by the Hopf ideal $I={\C{O}(S^2_{qs})}^+\C{O}(SU_q(2))$, we obtain the $\C{O}(U(1))$-comodule algebra structure
\begin{equation*}
\xymatrix{
\C{O}(S^2_{qs}) \ar[rr]^{\rho}\ar[rd]_{\nb} & & \C{O}(S^2_{qs}) \ot \C{O}(U(1)) \\
& \C{O}(S_{qs}^2) \ot \C{O}(SU_q(2))  \ar[ru]_{\Id\ot \pi_I} &
}
\end{equation*}
where $\C{O}(U(1))$ is the Hopf algebra (of regular functions on the circle) generated by two group-likes $\s, \s^{-1}$. Explicitly, $\rho: \C{O}(S^2_{qs}) \lra \C{O}(S^2_{qs}) \ot \C{O}(U(1))$ is defined as
\begin{align*}
\rho(z_{-1}) = & z_{-1} \ot \s^2,\\
\rho(z_0) = & z_0 \ot 1,\\
\rho(z_1) = & z_1 \ot \s^{-2}.
\end{align*}
We further note from \cite[Sect. 4.5]{KlimSchm-book} that setting
\begin{equation*}
\C{O}(S^2_{qs})[n]:=\{a\in \C{O}(S^2_{qs}) \mid \rho(a) = a \ot \s^n\},
\end{equation*}
we have $z_{-1}^iz_0^jz_1^k\in \C{O}(S^2_{qs})[2i-2k]$, where $i,j,k \geq 0$, that
\begin{equation}\label{grade-Podles}
\C{O}(S^2_{qs}) = \bigoplus_{m\in \B{Z}}\C{O}(S^2_{qs})[2m],
\end{equation}
and that the Haar state $h$ vanishes on $\C{O}(S^2_{qs})[2m]$ for $m\neq 0$.

\begin{lemma}
The Haar state $h$ of the CQG algebra $\C{O}(SU_q(2))$ induces a $1$-invariant $f_1$-trace on the algebra $\C{O}(S^2_q)$.
\end{lemma}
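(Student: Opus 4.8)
The plan is to check, for the $\C{O}(SU_q(2))$-comodule algebra $\C{O}(S^2_q)$ whose coaction $\nb$ is the restriction of the comultiplication of $\C{O}(SU_q(2))$, that the restricted functional $Tr:=h|_{\C{O}(S^2_q)}$ satisfies the two defining conditions of a $\s$-invariant $\d$-trace in the case $\s=1$, $\d=f_1$. Writing $\nb(a)=a\ps 1\ot a\ps 2$ with $a\ps 1\in\C{O}(S^2_q)$ and $a\ps 2\in\C{O}(SU_q(2))$, these read $h(a\ps 1)a\ps 2=h(a)1$ and $h(ab)=h(ba\ps 1)f_1(a\ps 2)$. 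The first ($1$-invariance) identity is nothing but the right invariance $(h\ot\Id)\D=h(\cdot)1$ of the Haar state, which holds on all of $\C{O}(SU_q(2))$ and hence on the subalgebra $\C{O}(S^2_q)$; so it is immediate.

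For the $f_1$-trace identity I would start from the modular property $h(ab)=h(b(f_1\cdot a\cdot f_1))$ of the Haar state established above, in which $f_1\cdot a\cdot f_1=f_1(a\ps 1)a\ps 2 f_1(a\ps 3)$. Because $f_1$ is a character, $\theta(a):=a\cdot f_1=a\ps 1 f_1(a\ps 2)$ is an algebra automorphism of $\C{O}(S^2_q)$ (the right action by $f_1$ preserves the coideal), and since $f_1$ is the evaluation by $K^2$ it acts on the homogeneous piece $\C{O}(S^2_q)[2m]$ of \eqref{grade-Podles} by the scalar $q^{2m}$. Since $h(ba\ps 1)f_1(a\ps 2)=h(b\,\theta(a))$, the desired identity is exactly the assertion that $h$ restricts to a $\theta$-twisted trace, $h(ab)=h(b\,\theta(a))$; comparing it with the modular property, it is equivalent to showing that the left action of $f_1$ is invisible to $h$ on the sphere, that is $h(b(f_1\cdot c))=h(bc)$ for all $b,c\in\C{O}(S^2_q)$.

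The last identity is the crux, and I would attack it through two reductions. First, since $\theta$ is an algebra automorphism the twisted-trace relation $h(ab)=h(b\,\theta(a))$ is multiplicative in $a$, so it suffices to verify it when $a$ is one of the generators $z_{-1},z_0,z_1$, i.e. to prove $h(z_{-1}b)=q^2h(bz_{-1})$, $h(z_0b)=h(bz_0)$ and $h(z_1b)=q^{-2}h(bz_1)$ for arbitrary $b$. Second, rewriting the left-hand sides with the modular property as $h(z_ib)=h(b(f_1\cdot z_i\cdot f_1))$ and computing $f_1\cdot z_i\cdot f_1$ explicitly from $f_1(x)=q$, $f_1(y)=q^{-1}$, $f_1(u)=f_1(v)=0$, each identity becomes the vanishing of $h(b\cdot(\,\cdot\,))$ on a definite combination of monomials; for instance the case $a=z_0$, where $f_1\cdot z_0\cdot f_1=q^2ux-q^{-2}vy$, reduces to $q^2h(b\,ux)+h(b\,vy)=0$ for every $b\in\C{O}(S^2_q)$. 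I expect to close such relations by feeding the explicit Haar values on the commutative subalgebra $k[uv]$, e.g. $h(uv)=-q/(1+q^2)$ and $h((uv)^2)=q^2/(1+q^2+q^4)$ (obtained systematically from the Schur orthogonality relations of $\C{O}(SU_q(2))$), into the defining commutation relations.

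The main obstacle is precisely this final cancellation. The modular automorphism $a\mapsto f_1\cdot a\cdot f_1$ of the ambient Haar state carries an extra \emph{left} $f_1$-twist that is absent from $\theta$, and the generators $z_i$ are not homogeneous for this left torus weight; consequently the invisibility $h(b(f_1\cdot c))=h(bc)$ fails term by term (e.g. $h(vy\,ux)\neq0$) and holds only after the orthogonality-driven cancellation between the left-weight $+L$ and $-L$ contributions. Establishing this cancellation — equivalently, that $h$ annihilates all nonzero-left-weight correlations between elements of the coideal $\C{O}(S^2_q)$ — is where the real content of the lemma lies.
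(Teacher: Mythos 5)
Your reductions up to the crux are all correct, and in fact they diagnose the situation more accurately than a casual reading of the paper would: the $1$-invariance is indeed just the invariance $h(a_{(1)})a_{(2)}=h(a)1$ of the Haar state; a character twist $\theta$ makes the relation $h(ab)=h(b\,\theta(a))$ multiplicative in $a$, so checking it on $z_{-1},z_0,z_1$ suffices; your eigenvalue bookkeeping $\theta=q^{2m}$ on $\C{O}(S^2_q)[2m]$ is consistent; and the generator cases do reduce to vanishing statements such as $q^2h(b\,ux)+h(b\,vy)=0$ for all $b\in\C{O}(S_q^2)$. That statement is true — e.g.\ for $b=z_0$, using $vy\,ux=q\,uv+q^2(uv)^2$ and $ux\,vy=q^{-1}uv+q^{-2}(uv)^2$ together with your two moments, the two contributions come out as $(q^2-1)+(1-q^2)=0$. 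But this is exactly where your proposal stops: the general cancellation $h(b(f_1\cdot c))=h(bc)$, which you yourself name as ``where the real content of the lemma lies,'' is never proved. Two moments of $h$ on $k[uv]$ cannot decide it for arbitrary $b$, since $b\,ux$ and $b\,vy$ spread over all spin components; one needs either the full family $h((uv)^k)=(-q)^k(1-q^2)/(1-q^{2k+2})$ together with a normal-ordering reduction, or the Schur orthogonality relations run uniformly in the spin. As submitted, the argument is a correct reduction plus an announced claim — a genuine gap.

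It is worth knowing how the paper closes this gap: its proof is a single chain of equalities from the modular property, collapsing the two-sided twist $f_1\cdot a\cdot f_1$ to the one-sided twist $a_{(1)}f_1(a_{(2)})$ by invoking the identity $f_1(a_{(1)})a_{(2)}=a$ for $a\in\C{O}(S^2_{qs})$, cited from \cite[Eq.~(22)]{SchmWagn04} — elementwise triviality of one twist, hence no cancellation and no orthogonality at all. Your claim that term-by-term invisibility \emph{fails} contradicts that citation head-on, and for the embedding the paper actually uses (the $\widetilde{z_i}$, built from $x^2,v^2,u^2,y^2,ux,vy$) your side of the disagreement is the correct one: with $f_1(x)=q$, $f_1(y)=q^{-1}$ one computes $f_1((z_0)_{(1)})(z_0)_{(2)}=q^2\,ux-q^{-2}\,vy\neq z_0$, and indeed $h(vy\,ux)=q\,h(uv)+q^2h((uv)^2)\neq 0$. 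The cited identity holds on the nose only in the Schm\"udgen--Wagner realization of the sphere as the coinvariant subalgebra (generated by elements of type $xu$, $vy$, $uv$), not in the Noumi--Mimachi/Klimyk--Schm\"udgen realization adopted in this paper. So your route, if completed, would supply precisely the step the paper outsources to the literature under mismatched conventions; but to turn it into a proof you must actually establish the cancellation — for instance by decomposing $\C{O}(S_q^2)$ into its spin-$\ell$ components and applying the orthogonality relations, or by proving the twisted trace property in the coinvariant picture (where the one-sided triviality is definitional) and transporting it to the present realization.
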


\begin{proof}
For any $a,b \in \C{O}(SU_q(2))$, it follows from the modular property of the Haar functional that
\begin{equation*}
h(ab) = h(b(f_1\cdot a \cdot f_1)) = h(b(a \cdot f_1)) = h(b(f_1\cdot a_{(2)}))f_1(a_{(1)}) = h(b(f_1\cdot a)) = h(ba_{(1)})f_1(a_{(2)}),
\end{equation*}
using, on the forth equality, the fact that $f_1(a_{(1)})a_{(2)} = a$ for any $a\in \C{O}(S^2_{qs})$, see for instance \cite[Eq. (22)]{SchmWagn04}.
Next, by the invariance property \cite[Def. 4.3]{KlimSchm-book}, see also \cite{NoumMima90} we readily have
\begin{equation*}
h(a_{(1)})a_{(2)} = a_{(1)}h(a_{(2)}) = h(a)1,
\end{equation*}
for any $a\in \C{O}(SU_q(2))$.
\end{proof}

As a result, we have the characteristic homomorphism $\gamma_n\colon
\CC_n(\C{O}(S^2_{qs})) \lra \CC_n(\C{O}(U(1));1,\ve)$ defined as
\begin{align}\label{char-map-Sq}
\gamma_n(a_0 \odots a_n) := h(a_0{a_1}\ns{0}\ldots {a_n}\ns{0}){a_1}\ns{1}\odots {a_n}\ns{1}
\end{align}
to transfer the $KU$-classes of $\C{O}(S^2_{qs})$ to the Hopf-dihedral homology of the (cocommutative) Hopf algebra $\C{O}(U(1))$.

By means of the Hopf-dihedral Chern character
\begin{equation*}
{\rm Ch}_n^\pm:KU_n(\C{O}(S^2_{qs})) \lra HC^\pm_n(\C{O}(U(1));1,\ve).
\end{equation*}
we have the Hopf-dihedral homology of $\C{O}(U(1))$ to classify the $KU$-classes of $\C{O}(S^2_{qs})$.

We consider the composition
$$H_m(U_n^\epsilon(\C{O}(S^2_{qs})),k) \lra
HC^\a_{m+2\ell}(U_n^\epsilon(\C{O}(S^2_{qs})),k) \lra
HC^\a_{m+2\ell}(\C{O}(U(1)),k),$$
where $\a=(-1)^\ell$. Since
\begin{align*}
& H_0(\C{O}(S^1),k) \cong H_0(\B{Z},k) = k, \\
& H_1(\C{O}(S^1),k) \cong H_1(\B{Z},k) = \langle \s \rangle \cong k, \\
& H_m(\C{O}(S^1),k) \cong H_m(\B{Z},k) = 0, \quad m\geq 2,
\end{align*}
for the details of which we refer the reader to \cite{Brown-book}, we have
\begin{equation*}
H_0(U_n^\epsilon(\C{O}(S^2_{qs})),k) = k_{U_n^\epsilon(\C{O}(S^2_{qs}))} = k = k_{\B{Z}} = H_0(\B{Z},k).
\end{equation*}
As for $H_1(U_n^\epsilon(\C{O}(S^2_{qs})),k)$, we first recall that since $BU^\epsilon(\C{O}(S^2_{qs}))^+$ is path connected, the Hurewicz map 
\begin{align*}
h\colon KU_1^\epsilon(\C{O}(S^2_{qs})) & = \pi_1(BU^\epsilon(\C{O}(S^2_{qs}))^+) = \frac{U^\epsilon(\C{O}(S^2_{qs}))}{[U^\epsilon(\C{O}(S^2_{qs})),\,U^\epsilon(\C{O}(S^2_{qs}))]} \\
\lra & \frac{\pi_1(BU^\epsilon(\C{O}(S^2_{qs}))^+)}{[\pi_1(BU^\epsilon(\C{O}(S^2_{qs}))^+),\,\pi_1(BU^\epsilon(\C{O}(S^2_{qs}))^+)]} \cong H_1(U_n^\epsilon(\C{O}(S^2_{qs})),k)
\end{align*}
is the canonical abelianization, and therefore in this case is
identity. On the other hand, it follows from the relations \eqref{relations-Podles} that the element
\begin{equation*}
u =\left(\begin{array}{cc}
q^{-1}z_0 & \sqrt{1+q^{-2}}z_1 \\
\sqrt{1+q^{2}}z_{-1} & qz_0
\end{array}\right)
\end{equation*}
is a unitary matrix on $\C{O}(S_q^2)$. Hence, for any
\begin{equation*}
\left(\begin{array}{cc}
\a & \b \\
\gamma & \t
\end{array}\right) \in U^\epsilon_{n}(\B{R})
\end{equation*}
we have
\begin{equation*}
M = \left(\begin{array}{cc}
\a\ot u & \b\ot u \\
\gamma\ot u & \t\ot u
\end{array}\right) \in U_{2n}^{\epsilon}(\C{O}(S^2_{q})).
\end{equation*}
For simplicity, we may take 
\begin{equation*}
M = \left(\begin{array}{cc}
0 & \epsilon I_n\ot u \\
\epsilon I_n \ot u & 0
\end{array}\right) \in U_{2n}^{\epsilon}(\C{O}(S^2_{q})).
\end{equation*}
Then,
\begin{align*}
[M] \mapsto & \left[\sum_{1 \leq i_0, i_1\leq 2n} h({{M^{-1}}_{i_0i_1}}{{{M}}_{i_1i_0}}\ns{0}){{{M}}_{i_1i_0}}\ns{1}\right] = \left[\sum_{1 \leq i_0, i_1\leq 2n} n h({{u^\dagger}_{i_0i_1}}{{u}_{i_1i_0}}\ns{0}){{u}_{i_1i_0}}\ns{1}\right] \\
    = & 2 n((1 + q^{-2})h(z_1^*z_1) - (1 + q^2)h(z_{-1}^*z_{-1}))[\s].
\end{align*}
This proves that an element $[M]\in KU^+_0(\C{O}(S^2_q))$ is sent to
a non-trivial element in $HC^+_1(\C{O}(U(1));1,\varepsilon)$ under the
dihedral Chern character. Thus we proved
\begin{theorem}
  $KU^+_1(\C{O}(S^2_q))$ is non-trivial.
\end{theorem}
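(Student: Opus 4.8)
The plan is to detect a single non-trivial class in $KU^+_1(\C{O}(S^2_q))$ by computing its image under the Hopf-dihedral Chern character, and to exploit the fact that the receiving group is one-dimensional. Indeed, by Theorem~\ref{thm-dihedral-cocomm} applied to the cocommutative Hopf algebra $\C{O}(U(1))=k\B{Z}$ one has $HC^+_1(\C{O}(U(1));1,\ve)=H_1(\B{Z},k)=\langle[\s]\rangle$, so it suffices to produce one unitary whose image carries a non-zero $[\s]$-coefficient.

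First I would take the matrix $M\in U^\epsilon_{2n}(\C{O}(S^2_q))$ assembled from the unitary $u$ displayed above, so that its image in the abelianization $U^\epsilon/[U^\epsilon,U^\epsilon]=\pi_1(BU^\epsilon(\C{O}(S^2_q))^+)$ is a class $[M]\in KU^+_1(\C{O}(S^2_q))$. Since the degree-one Hurewicz map is the canonical abelianization, hence an isomorphism onto $H_1(U^\epsilon(\C{O}(S^2_q)),k)$, I would evaluate on $[M]$ the $\ell=0$, $\a=+1$ instance of the Chern character \eqref{dihedral-chern} post-composed with the characteristic map $\gamma$ of \eqref{char-map-Sq}, landing in $HC^+_1(\C{O}(U(1));1,\ve)$. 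The computation recorded above identifies this image with
\begin{equation*}
2n\bigl((1+q^{-2})\,h(z_1^*z_1)-(1+q^2)\,h(z_{-1}^*z_{-1})\bigr)[\s].
\end{equation*}

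The crux, and where I expect the real work to lie, is to see that this scalar does not vanish, since a priori it is an opaque combination of Haar values. First I would use the $*$-structure $z_i^*=(-q)^iz_{-i}$ to rewrite $z_1^*z_1=-q\,z_{-1}z_1$ and $z_{-1}^*z_{-1}=-q^{-1}z_1z_{-1}$, which collapses the bracketed factor to $(q+q^{-1})\bigl(h(z_1z_{-1})-h(z_{-1}z_1)\bigr)$. Next, for the standard sphere (the parameter $c=0$), the second relation in \eqref{relations-Podles} reads $(1-q^2)z_0^2=q(z_1z_{-1}-z_{-1}z_1)$, so applying $h$ converts the commutator into $(q^{-1}-q)\,h(z_0^2)$ and the full image into $2n(q^{-2}-q^2)\,h(z_0^2)\,[\s]$.

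Finally, the non-vanishing becomes structural rather than computational: $z_0$ is self-adjoint ($z_0^*=z_0$), so $z_0^2=z_0^*z_0$ is a non-zero positive element and faithfulness of the Haar state gives $h(z_0^2)>0$, while $q^{-2}-q^2\neq0$ whenever $q^4\neq1$. Hence the image is a non-zero multiple of $[\s]$ and $KU^+_1(\C{O}(S^2_q))\neq0$. A secondary point that I would check carefully is that $u$ is genuinely unitary and that $M$ preserves the form ${}_\epsilon J$, so that $[M]$ is a bona fide $KU^+_1$-class; both are bookkeeping consequences of the relations \eqref{relations-Podles} together with the $*$-structure.
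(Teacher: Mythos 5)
Your proposal follows the paper's proof essentially verbatim: the same unitary $u$, the same matrix $M$ built from it, and the same evaluation of the class $[M]$ under the characteristic map \eqref{char-map-Sq} composed with the dihedral Chern character, landing on a multiple of $[\s]$ in $HC^+_1(\C{O}(U(1));1,\ve)$. The one place you go beyond the paper is a genuine improvement: the paper merely asserts that the scalar $2n\bigl((1+q^{-2})h(z_1^*z_1)-(1+q^2)h(z_{-1}^*z_{-1})\bigr)$ is non-zero, whereas your reduction via $z_i^*=(-q)^iz_{-i}$ and the $c=0$ relation in \eqref{relations-Podles} to $2n(q^{-2}-q^2)h(z_0^2)$, with $h(z_0^2)\neq 0$ by faithfulness of the Haar state (literally valid in the analytic setting $0<q<1$; in the algebraic setting one instead observes that $h(z_0^2)$ is a non-vanishing rational function of $q$), makes the non-triviality explicit.
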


\bibliographystyle{plain}
\bibliography{references}{}

\end{document}